\documentclass[11pt,english]{amsart}

\usepackage[a4paper]{geometry}
\usepackage{amssymb}
\usepackage{amsmath}
\usepackage{nccmath}
\usepackage{fancyhdr}
\usepackage{color}
\usepackage{enumerate}
\usepackage{tikz}
\newtheorem{theorem}{Theorem}[section] % 1st argument is your name for it
\newtheorem{lemma}[theorem]{Lemma}     % 2nd argument is what is printed
\newtheorem{corollary}[theorem]{Corollary}
\newtheorem{proposition}[theorem]{Proposition}
\usepackage{mathrsfs}
\usepackage{moreverb}
\usepackage{dsfont}

\newtheorem{example}[theorem]{Example}
\newtheorem{definition}[theorem]{Definition}

\newtheorem{remark}[theorem]{Remark}

\newtheorem*{theorem*}{Theorem}

\usepackage[bookmarksopen, naturalnames]{hyperref}

\makeatother

\begin{document}
	
	\title[Common frequently hypercyclic random vectors]{Common frequently hypercyclic random vectors}
	
	\author{A. Mouze and V. Munnier}
	\thanks{}
	\address{Augustin Mouze, Univ. Lille, \'Ecole Centrale de Lille, CNRS, UMR 8524 - Laboratoire Paul Painlev\'e  F-59000 Lille, France}
	\email{augustin.mouze@centralelille.fr}
	\address{Vincent Munnier, 16 avenue Pasteur, 94100 Saint Maur Des Foss\'es, France}
	\email{munniervincent@hotmail.fr}
	
	\keywords{frequently hypercyclic operator, weighted shift operators, random vectors}
	\subjclass[2020]{47A16, 47B37}
	
	\begin{abstract} 
		We study common frequently hypercyclic vectors for countable families of weighted backward shifts acting on $\ell_p$ spaces, $1\leq p<\infty$. Using probabilistic techniques, we develop a general existence criterion, complemented by a non-existence result. These insights are then applied to the specific setting of countable families of polynomials of weighted backward shifts, providing conditions under which they share a common frequently hypercyclic vector.
	\end{abstract}
	%% maketitle must follow the abstract.
	\maketitle                   % Produces the title.
	
		\footnotetext{The first author is partially supported by the grant ANR-24-CE40-0892-01 of the French
		National Research Agency ANR (project ComOp).}

	\section{Introduction and notations.} 
Throughout this paper, $\mathbb{N}$ denotes the set of positive integers $\{1, 2, \dots\}$, while $\mathbb{N}_0 := \mathbb{N} \cup \{0\}$. For any finite set $E$, we denote its cardinality by $\# E$.

\begin{definition}
\rm{	The \textit{lower natural density} of a subset $A \subseteq \mathbb{N}$ is defined by:
	\[
	\underline{\hbox{dens}}(A) = \liminf\limits_{n \to \infty} \frac{\#(A \cap [1,n])}{n}
	\]
When the limit exists, we define the \textit{natural density} of $A$ by:
\[
\hbox{dens}(A) = \lim_{n \to \infty} \frac{\#(A \cap [1,n])}{n}
\].}
\end{definition}

We begin by recalling some notions of linear dynamics. Let $X$ be a separable topological vector space and $T:X\rightarrow X$ a continuous linear operator.
\begin{definition} \rm{A vector $x \in X$ is called a \textit{hypercyclic vector} for $T$ if its orbit 
$\{T^n x : n \in \mathbb{N}\}$ is dense in $X$. The operator $T$ is said to be 	
\textit{hypercyclic} if it possesses a hypercyclic vector. }
\end{definition}
\noindent In \cite{BayGriv}, Bayart and Grivaux introduced a stronger form of hypercyclicity. 

\begin{definition}
	\rm{A vector 
$x \in X$ is called a \textit{frequently hypercyclic vector} for $T$ if for 
each non-empty open set $U \subset X$ the set $\{n : T^n x \in U\}$ has positive lower density. The operator $T$ is said to be \textit{frequently hypercyclic}
if it possesses a frequently hypercyclic vector. }	
	\end{definition}
Thus, while hypercyclicity requires the orbit of a vector to visit every region of the space infinitely often, frequent hypercyclicity demands that the frequency of these visits has a positive lower density. This ensures that the orbit returns to every open set with a quantifiable regularity, implying that the vector spends a non-negligible proportion of time in every part of the space; ultimately, this bridges the gap between pure topological dynamics and ergodic theory.

The sets of hypercyclic and frequently hypercyclic vectors for the operator $T$ will be denoted by $HC(T)$ and $FHC(T)$, respectively. Such behaviors of vectors may seem unusual, but in the context of  hypercyclicity, they can be quite common. Indeed, the existence of hypercyclic vectors is often established via the Hypercyclicity Criterion, which ensures that $HC(T)$ is a dense $G_{\delta}$-set whenever it is non-empty. Consequently, the set $HC(T)$ is of the second Baire category. In contrast, proving the existence of frequently hypercyclic vectors - for instance, via the Frequent Hypercyclicity Criterion - is a constructive process that does not rely on the Baire Category Theorem. This is for good reason: the set $FHC(T)$ is always a set of Baire first category \cite{BayRuz,Mooth}. 
\\

A problem which has been extensively studied is that of common hypercyclicity. By an immediate consequence of the Baire Category Theorem, any countable family of hypercyclic operators on a Fr\'echet space shares a common hypercyclic vector; in fact, the set of such vectors is $G_{\delta}$-dense. The situation changes, however, when uncountable families of operators are considered. The first positive result in this direction was obtained by Abakumov and Gordon \cite{AG} who showed that the family $\{\lambda B:\ \vert \lambda\vert>1\}$ on $\ell_2$ - the Hilbert space of square-summable complex sequences - possesses  a dense $G_{\delta}$ set of common hypercyclic vectors, where $B$ denotes the backward shift operator. Subsequently, these results were generalized, notably through an elegant criterion established by Costakis and Sambarino \cite{CS}. For a comprehensive overview of linear dynamics and these developments, we refer the reader to the monographs \cite{BayMath} and  \cite{grossebook}. \\
By contrast, the property of common frequent hypercyclicity for families of operators has rarely been considered - largely due to the meager structure of the set $FHC(T)$  - until a recent paper  \cite{CEMM}, in which the authors establish criteria for such families to possess a shared frequently hypercyclic vector. This present paper contributes to this line of research by focusing on weighted backward shifts acting on $\ell_p$-spaces, $1\leq p<\infty$.\\ 

Let us fix $1\leq p<+\infty$. We recall that $\ell_p$ stands for the space of all sequences $x=(x_n)_{n\in \mathbb{N}_0}$ of scalars for which $\Vert x \Vert_p:=(\sum_{n\in \mathbb{N}_0}|x_n|^p)^{1/p} < +\infty$. Endowed with the norm $\Vert \cdot \Vert_p$, it is a Banach space. The unit sequence $(e_n)_{n\in \mathbb{N}_0}$ is a boundlessly complete unconditional basis of $\ell_p$. For a sequence $x = (x_n)$, its support is defined as the set of indices for which the terms of the sequence are non-zero. 
The set of all sequences with finite support is denoted by $c_{00}$. The elements of $c_{00}$ will also be referred to as polynomials. The subspace $c_{00}$ is dense in $\ell_p$.
For a non-zero polynomial $x=(x_n)_{n\in\mathbb{N}_0}$, its degree is defined as $\deg(x) = \max \{ n \in \mathbb{N}_0 : x_n \neq 0 \}$. \\
Let $\boldsymbol{w}=(w_k)_{k\in\mathbb{N}}$ be a \textit{weight} sequence, i.e. a bounded sequence of non-zero scalars. Let also $B_{\boldsymbol{w}}$ be the associated weighted backward shift on $\ell_p$ defined by $B_{\boldsymbol{w}}e_0:=0$ and $B_{\boldsymbol{w}}e_n:=w_ne_{n-1}$ for all $n\geq 1$. It is well-known that $B_{\boldsymbol{w}}$ is boundedness. When $\boldsymbol{w}$ is the constant sequence equal to $1$, the associated backward shift operator will be conventionally denoted by $B$. Weighted backward shifts play a crucial role in linear dynamics. They are instrumental in developing examples and counter-examples; indeed, the weight sequence provides a straightforward interpretation of the dynamical properties of the operators. We refer the reader to the books \cite{BayMath,grossebook}. For instance, $B_{\boldsymbol{w}}$ is hypercyclic if and only if $\limsup\limits_{n\rightarrow\infty}\vert w_1\dots w_n\vert=\infty$. In an important paper \cite{BayRuz}, Bayart and Rusza showed that the operator $B_{\boldsymbol{w}}$ is frequently hypercyclic if 
	and only if 
	\begin{equation}\label{condweighthyp}
		C_{\boldsymbol{w}}:=\left(\sum_{j\geq 1}\frac{1}{\vert w_1w_2\dots w_j\vert^p}\right)^{1/p}<+\infty.
	\end{equation} 

Returning to the problem of common frequent hypercyclicity, it is noteworthy that the criteria established in \cite{CEMM} show, in particular, that the operators $\lambda B$, $\lambda\in\Lambda\subset\mathbb{C}$ admit a common frequently hypercyclic vector if and only if the set $\{\vert\lambda\vert:\ \lambda\in\Lambda\}$ is a countable relatively compact subset of $(1,\infty)$. Analogous results are established for the existence or non-existence of common frequently hypercyclic vectors for families of the type $\lambda B_{\boldsymbol{w}}$. The proofs are very technical and follow a constructive approach. In a recent work, Grivaux, Matheron and Menet demonstrated that softer arguments based on measure theory and ergodicity are insufficient to prove the existence of a common hypercyclic vector, even in the case of two operators $aB$ and $bB$. Actually they showed that the operators $aB$ and $bB$ are in fact orthogonal: any two invariant measures $m_a$ and $m_b$ for $aB$ and $bB$ respectively with $m_a(\{0\})=m_b(\{0\})=0$ are necessarily orthogonal \cite{GMM}. 
The aim of this paper is to nevertheless provide \textit{a probabilistic construction of common frequently hypercyclic vectors }for finite or countable families of weighted backward shifts. 
The random construction of hypercyclic vectors has been the subject of numerous studies over the past ten years, specifically focusing on the growth of hypercyclic functions in settings like shift operators on the space of entire functions and on the space of holomorphic functions on the unit disk \cite{Agn, Agn2, MouMun, MouMun2, Niku}. Here our random construction is based, in part, on a slight modification of a random approximation result from \cite{MouMun}. This variant ensures that, given a frequently hypercyclic weighted backward shift $B_{\boldsymbol{w}}$, one can find a random vector $Z:=\sum_{j\geq 0}X_je_j$ in $\ell_p$ almost surely, satisfying the following approximation property: for any given $\varepsilon>0$ and $h\in c_{00}$, almost surely, there exists a subset $A\subseteq\mathbb{N}$ with positive natural density such that, for all $n\in A$, $\Vert \sum_{k=0}^nX_{k+n}w_{k+1}\ldots w_{k+n}e_k-h\Vert_p<\varepsilon$. We then construct a common frequently hypercyclic vector by suitably concatenating blocks of frequently hypercyclic random vectors corresponding to each operator in the family. This allows us to obtain the random version of the criterion of common frequent hypercyclicity for countable families of weighted shifts with geometric growth of weight products, as developed in \cite{CEMM}, which is the content of Theorem \ref{weightedshiftscounta}. It covers the case of the operators $\lambda B$. Furthermore, in the main section of this paper, by refining this method, we provide a sufficient condition for the existence of a common (random) frequently hypercyclic vector for families of weighted backward shifts with more general growth rates. This condition is expressed in terms of the summability of a series involving the ratio of weight products. This result is stated as Theorem \ref{weightedshiftscounta22}. Conversely, we provide sufficient conditions for the non-existence of common frequently hypercyclic vectors based on the non-summability of a similar series. We refer to Theorem \ref{interden}. We illustrate these theorems with new examples. Finally we show that these results can be extended to countable families of polynomials of weighted backward shift operators, by viewing these operators as shifts in adapted bases. This approach was previously used in \cite{MouMun}, and we rely on this same construction. The main result of this section is Theorem \ref{polyweightedshiftscounta22}, where sufficient conditions are provided under which a family of polynomials of frequently hypercyclic weighted backward shifts shares a common frequently hypercyclic vector. As corollary, we show that this result extends to polynomials of weighted shifts even if the shift operators themselves fail to be frequently hypercyclic.\\

The paper is organized as follows: Section \ref{prelim} presents preliminary results which will be employed throughout the remainder of this work, while Section \ref{geomcommon} focuses on the random version of the known criterion of common frequent hypercyclicity for countable families of weighted shifts with geometric growth of weight products. In Section \ref{abstheory}, we deal with the sufficient conditions of existence or non-existence of common frequently hypercyclic vectors for more general families of weighted backward shifts. Finally, in Section \ref{poly}, we are interested in families of polynomials of weighted backward shift operators. 

\vskip5mm

\noindent \textit{Notations.} Throughout the paper, $(\Omega,\mathcal{B},\mathbb{P})$ will be a standard probability space. We will say that the support of  a complex random variable $X$ is the {\it whole complex plane} if for every non-empty open set $U\subset\mathbb{C}$, we have $\mathbb{P}(X^{-1}(U))>0.$ 
Following the standard Landau notation, we will denote by $u_n=\mathcal{O}(v_n)$ if there exists a constant $C>0$ such that the inequality $\vert u_n\vert\leq C\vert v_n\vert$ holds for all $n$ sufficiently large.
Finally, whenever $A$ and $B$ depend on some parameters, we will use the notation $A\lesssim B$ for meaning $A\leq CB$ for some constant $C>0$ that does not depend on the involved parameters. 
%apart from $p$ and $\omega\in \Omega$. 

\section{Preliminary results}\label{prelim} In this section, we present several definitions and preliminary lemmas that will be central to our subsequent proofs. We begin by the following classical yet useful observation which will be essential for the probabilistic constructions.

\begin{lemma}\label{lemmefondamproba}
	For all $k\geq 1$, let $(Z_{k,n})_{n \in \mathbb{N}}$ be a sequence of random vectors in $\ell_p$ ($1 \le p < \infty$) and let $\underline{\mathcal{D}}$ be the family of subsets of $\mathbb{N}$ with positive lower density. Assume that for every $k\geq 1$, $h \in c_{00}$ and every $\varepsilon > 0$:
	\[
	\mathbb{P} \left( \{ \omega \in \Omega : \{ n \in \mathbb{N} : \| Z_{k,n}(\omega) - h \|_p < \varepsilon \} \in \underline{\mathcal{D}} \} \right) = 1.
	\]
	Then, for almost every $\omega \in \Omega$, the following property holds:
	\[
	\forall k\geq 1, \ \forall h \in \ell_p,\ \forall \varepsilon > 0,\ \{ n \in \mathbb{N} : \| Z_{k,n}(\omega) - h \|_p < \varepsilon \} \in \underline{\mathcal{D}}.
	\]
\end{lemma}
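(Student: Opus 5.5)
Our plan is a countable-intersection argument followed by an approximation step. Since $\ell_p$ is separable and $c_{00}$ is dense, we fix a countable dense subset $\mathcal{H}\subset c_{00}$, for instance the finitely supported sequences with coordinates in $\mathbb{Q}+i\mathbb{Q}$ (or $\mathbb{Q}$ in the real case). For each triple $(k,h,m)\in\mathbb{N}\times\mathcal{H}\times\mathbb{N}$ we consider the event
\[
\Omega_{k,h,m}:=\left\{\omega\in\Omega:\ \{n\in\mathbb{N}:\ \|Z_{k,n}(\omega)-h\|_p<1/m\}\in\underline{\mathcal{D}}\right\}.
\]
By hypothesis (applied with $\varepsilon=1/m$) each $\Omega_{k,h,m}$ has probability $1$. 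We set $\Omega':=\bigcap_{k,h,m}\Omega_{k,h,m}$. As a countable intersection of events of full probability, it satisfies $\mathbb{P}(\Omega')=1$.

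There is a measurability point to check: $\Omega_{k,h,m}$ must be an event, or at least contain one of full measure. Each $Z_{k,n}$ is a random vector, so $\omega\mapsto\|Z_{k,n}(\omega)-h\|_p$ is measurable, and hence so is $\mathbf{1}_{\{\|Z_{k,n}-h\|_p<1/m\}}$. The lower density
\[
\liminf_{N\to\infty}\frac{1}{N}\sum_{n=1}^N\mathbf{1}_{\{\|Z_{k,n}-h\|_p<1/m\}}
\]
is therefore a measurable function, and $\Omega_{k,h,m}$ is the set where it is positive, so it is measurable. In any case, the hypothesis implicitly treats it as an event. We expect this to be the only delicate point, and it is routine.

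Now fix $\omega\in\Omega'$, $k\geq1$, $h\in\ell_p$ and $\varepsilon>0$. We choose $m$ with $2/m<\varepsilon$ and $h'\in\mathcal{H}$ with $\|h-h'\|_p<1/m$. By the triangle inequality,
\[
\{n:\ \|Z_{k,n}(\omega)-h'\|_p<1/m\}\subseteq\{n:\ \|Z_{k,n}(\omega)-h\|_p<\varepsilon\}.
\]
Lower density is monotone under inclusion, and the left-hand set has positive lower density because $\omega\in\Omega_{k,h',m}$. Hence the right-hand set belongs to $\underline{\mathcal{D}}$. This holds simultaneously for all $k$, $h$ and $\varepsilon$ for every $\omega$ in the full-measure set $\Omega'$, which is the claim.
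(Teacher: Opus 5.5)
Your proof is correct and follows the paper's argument essentially step for step. It uses the same countable dense set of finitely supported sequences with rational coordinates, the same full-measure intersection over $(k,h,1/m)$, and the same measurability remark via the $\liminf$ of averaged indicators. It then passes to arbitrary $h\in\ell_p$ and $\varepsilon>0$ by the same triangle-inequality inclusion and monotonicity of lower density.
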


\begin{proof} First, we note that for any $h \in \ell_p$ and $\varepsilon > 0$, the set 
		\[
		E_{h,\varepsilon,k}=\{ \omega \in \Omega : \{ n \in \mathbb{N} : \| Z_{k,n}(\omega) - h \|_p < \varepsilon \} \in \underline{\mathcal{D}} \}
		\]
		is measurable. Indeed, since $\underline{\mathcal{D}}$ is defined by the positivity of the lower density, the condition $\omega\in E_{h,\varepsilon ,k}$ is equivalent to 
		\[ \liminf\limits_{N \to \infty} \frac{1}{N} \sum_{n=1}^N \mathds{1}_{ \{ \| Z_{k,n}(\omega) - h \|_p < \varepsilon \} } > 0,
	\]
		which involves only countable operations on measurable functions. 
		Since $1 \leq p < \infty$, the set $D$ of sequences in $c_{00}$ with coordinates in $\mathbb{Q}+i\mathbb{Q}$ is a countable dense subset in $\ell_p$. For any $k\geq 1$, $h \in D$ and $s \in \mathbb{N}^*$, define the event:
		\[
		E_{h, s,k} = \{ \omega \in \Omega : \{ n \in \mathbb{N} : \| Z_{k,n}(\omega) - h \|_p < 1/s \} \in \underline{\mathcal{D}} \}.
		\]
		By hypothesis, $\mathbb{P}(E_{h, s,k}) = 1$ for all $k\geq 1$, every $h \in D$ and $s \in \mathbb{N}^*$. Set
		\[
		E = \bigcap_{h \in D} \bigcap_{k\geq 1}\bigcap_{s \in \mathbb{N}^*} E_{h, s,k}.
		\]
		Since a countable intersection of sets of measure 1 has measure 1, we have $\mathbb{P}(E) = 1$. Now, let $\omega \in E$, $h' \in \ell_p$, $k\geq 1$ and $\varepsilon > 0$. By the density of $D$ in $\ell_p$, there exists $h \in D$ such that $\|h' - h\|_p < \varepsilon/2$. Choose $s \in \mathbb{N}^*$ such that $1/s < \varepsilon/2$. Since $\omega \in E_{h, s,k}$, we have $\{ n \in \mathbb{N} : \| Z_{k,n}(\omega) - h \|_p < 1/s \} \in \underline{\mathcal{D}}$. By the triangle inequality we get
		\[
		\|Z_{k,n}(\omega) - h'\|_p \leq \|Z_{k,n}(\omega) - h\|_p + \|h - h'\|_p < \frac{\varepsilon}{2} + \frac{\varepsilon}{2} = \varepsilon.
		\]
		We deduce 
		$$\{ n \in \mathbb{N} : \| Z_{k,n}(\omega) - h \|_p < 1/s \} \subseteq \{ n \in \mathbb{N} : \| Z_{k,n}(\omega) - h' \|_p < \varepsilon\} $$
	which gives $\{ n \in \mathbb{N} : \| Z_{k,n}(\omega) - h' \|_p < \varepsilon\}\in\underline{\mathcal{D}}$. The proof is finished.
	\end{proof}

Furthermore, for our purposes, it will be useful to partition a vector $v = \sum_{n \geq 0} v_n e_n $ in $\ell_p$ into blocks of specified size. The appropriate decomposition will be given by
\[
v=v_0e_0+\sum_{n=0}^{\infty}P_{n,m}(v)
\]
where
\[
P_{n,m} (v) := \sum_{j=m^n}^{m^{n+1}-1} v_j e_j
\]
and $m$ is a positive integer fixed according to the specific construction. 
We begin by showing that the intersection of the supports of a suitable selection of blocks with a set $A \subseteq \mathbb{N}$ of positive natural density yields a set with positive lower density. In order to build this particular selection, let us first introduce the $2$-adic valuation $v_2(n)$ of a positive integer by
$$v_2(n)=\max\{k\in\mathbb{N}_0:2^k\vert n\}.$$
Moreover, in the following, $\psi$ will denote the mapping
$$\begin{array}{rcl}\psi :\mathbb{N}&\rightarrow&\mathbb{N}\\n&\mapsto&v_2(n)+1.\end{array}$$
Finally, for all $k\geq 1$, let us define the subset $R_k$ of $\mathbb{N}$ as follows
$$R_k:=\{n\in\mathbb{N}\ ;\ \psi(n)=k\}.$$ 
It is easy to check that 
$$R_k=\{2^{k-1}(2j+1)\ ;\ j\in\mathbb{N}_0\}.$$
Thus the regularity of the distribution of the sets $R_k$ then yields the following lemma. 
	
	\begin{lemma}\label{lemma_combination_density}
		Let $A$ be a subset of $\mathbb{N}$ such that $\rm{dens}(A)>0$ and $m,\gamma$ be integers with $2\leq\gamma<m$. Then, for all $k\geq 1$, 
		$$ \underline{\rm{dens}}(A\cap \bigcup_{n\in R_k} [m^n,\ldots,\gamma m^{n}]\cap\mathbb{N})>0.$$
	\end{lemma}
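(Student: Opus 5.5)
Set $E_k:=\bigcup_{n\in R_k}[m^n,\gamma m^n]\cap\mathbb{N}$ and $d:=\mathrm{dens}(A)>0$. The plan is a counting argument: along the sparse subsequence $N=\gamma m^n$, $n\in R_k$, the block $[m^n,\gamma m^n]$ is a fixed proportion of $[1,N]$, and $A$ must meet it in a positive proportion. The gaps between consecutive $n\in R_k$ are only $2^k$, so the same count serves for all intermediate $N$.

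\textbf{Step 1: one block.} Fix $n\in R_k$. By the existence of the density,
\[
\#(A\cap[1,\gamma m^n])=d\gamma m^n+o(m^n),\qquad \#(A\cap[1,m^n])=dm^n+o(m^n).
\]
Subtracting gives
\[
\#(A\cap[m^n,\gamma m^n])\geq d(\gamma-1)m^n-o(m^n)\geq \tfrac{d}{2}(\gamma-1)m^n
\]
for $n$ large. This step uses the full density, not only the lower density, and $\gamma\geq 2$ ensures $\gamma-1\geq 1>0$.

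\textbf{Step 2: arbitrary $N$.} Given a large $N$, let $n$ be the largest element of $R_k$ with $\gamma m^n\leq N$. The next element of $R_k$ is $n+2^k$, so $N<\gamma m^{n+2^k}$. Since $\gamma<m$, the intervals $[m^j,\gamma m^j]$ are disjoint and correctly ordered, so $[m^n,\gamma m^n]\subset[1,N]$. Therefore
\[
\frac{\#(A\cap E_k\cap[1,N])}{N}\geq \frac{\tfrac d2(\gamma-1)m^n}{\gamma m^{n+2^k}}=\frac{d(\gamma-1)}{2\gamma m^{2^k}}>0 .
\]
This bound is uniform in $N$, which gives a positive $\liminf$.

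The only point needing care is that $N$ large forces $n$ large, so that the $o(m^n)$ errors in Step 1 are absorbed. The remaining work is bookkeeping of integer endpoints, so no real obstacle is expected.
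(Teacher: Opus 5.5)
Your proof is correct and follows essentially the paper's argument. You count the elements of $A$ in one block $[m^n,\gamma m^n]$ using the existence of the density (you use the counting function, the paper uses the equivalent $a_j\sim j/c$), then use that consecutive elements of $R_k$ differ by $2^k$ to get a lower bound of order $d(\gamma-1)/(\gamma m^{2^k})$. Your Step 2 spells out the passage to arbitrary $N$, which the paper leaves implicit.
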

	
	\begin{proof}
		Let us display $A$ as an increasing sequence $\displaystyle A=({a}_{j})$. Since $\gamma >1$, let us choose $\varepsilon >0$ small enough such that $\displaystyle \gamma >\frac{1+\varepsilon}{1-\varepsilon}$. The hypothesis $\rm{dens}(A)=c>0$ is equivalent to $\displaystyle {a}_{j}\underset{j\rightarrow \infty}{\sim}\frac{j}{c}$. Therefore, there exists a positive integer $j_{\varepsilon}$ such that, for all $j\geq j_{\varepsilon} $,
		$$\frac{1}{c}(1-\varepsilon) j\leq {a}_{j} \leq \frac{1}{c}(1+\varepsilon)j.$$
		First, we are going to evaluate the minimum number of elements of $A$ that belong to the set $[{m}^{n},\ldots,\gamma {m}^{n}]\cap\mathbb{N}$. Set 
		$$ I_{n}:= \left\{j\in \mathbb{N}\mbox{ };\mbox{ } \frac{1}{c}(1-\varepsilon)j \geq {m}^{n} \mbox{ and } \frac{1}{c}(1+\varepsilon)j \leq \gamma {m}^{n}\right\}$$
		and observe that, for all $j\in I_n$, $a_j\in [{m}^{n},\ldots,\gamma {m}^{n}]\cap\mathbb{N}$.
		The inclusion 
		$$\displaystyle I_{n}\supset \left[\frac{c{m}^{n}}{1-\varepsilon};\frac{c\gamma{m}^{n}}{1+\varepsilon}\right]\cap \mathbb{N}$$ 
		ensures that
		$$\#I_{n}\geq c{m}^{n}\left(\frac{\gamma}{1+\varepsilon}-\frac{1}{1-\varepsilon} \right),$$
		which implies that, for all $n$ large enough, there is at least $c{m}^{n}\left(\frac{\gamma }{1+\varepsilon}-\frac{1}{1-\varepsilon} \right)$ elements of $A$ between ${m}^{n}$ and $\gamma m^n$. 
		By combining this property with the observation that consecutive elements of $R_k$ differ by $2^k$, we obtain
		$$ \underline{\rm{dens}}(A\cap \bigcup_{n\in R_k} [m^n,\ldots,\gamma m^{n}]\cap\mathbb{N})\geq \lim_{n\rightarrow +\infty}\left(\frac{c{m}^{n}\left(\frac{\gamma }{1+\varepsilon}-\frac{1}{1-\varepsilon} \right)}{\gamma {m}^{n+2^k}}\right)=\frac{c\left(\frac{\gamma }{1+\varepsilon}-\frac{1}{1-\varepsilon} \right)}{\gamma{m}^{2^k}}>0,$$
		which gives the desired result.
	\end{proof}

\noindent Finally let us also introduce a specific class of polynomial sequences in $\ell_p$.

\begin{definition}
	\rm{A sequence of polynomials $(u_k)$ in $\ell_p$ is said to be \textit{admissible} if it satisfies the following three conditions:
		\begin{enumerate}
				\item $u_0 = e_0$;
				\item $\deg(u_k) = k$ for all $k \geq 1$;
				\item $\sum_{k=0}^{\infty} \|u_k\|_p < \infty$.
			\end{enumerate}}
\end{definition}
\noindent Note that the previous conditions ensure that every element $h$ in $c_{00}$ can be written as $h=\sum_{k=0}^{d}\tilde{h}_ku_k$. \\

\noindent Moreover, given an admissible sequence $\boldsymbol{u}=(u_k)$ of polynomials in $\ell_p$, we define the associated subspace $E_p[\boldsymbol{u}] \subset \ell_p$ by:
\[
E_p[\boldsymbol{u}] := \left\{ \sum_{k=0}^{\infty} x_k u_k : (x_k) \in \mathbb{C}^{\mathbb{N}_0}, \sum_{k=0}^{\infty} |x_k| \|u_k\|_p < \infty \right\}
\]
We shall also consider the associated backward shift operator $B{[\boldsymbol{u}]}$ on $E_p[\boldsymbol{u}]$ defined by $B{[\boldsymbol{u}]}(u_0)=0$ and $B{[\boldsymbol{u}]}(u_k)=u_{k-1}$, for $k\geq 1$. 
Let us also define the sequence of truncated backward shift operators $(T_{n}(B{[\boldsymbol{u}]}^n))$ with respect the family $\boldsymbol{u}=(u_k)$ as follows
$$\begin{array}{rccl}T_{n}(B{[\boldsymbol{u}]}^n)\ :& E_p[\boldsymbol{u}]&\rightarrow &\ell_p\\&
		\sum_{k\geq 0}X_ku_k&\mapsto &\sum_{k=0}^{n}X_{k+n}u_k.\end{array}$$ 

\section{Common frequently hypercyclic vectors for a countable family of weighted backward shift operators}\label{geomcommon} In this section, we provide a random version of the common frequent hypercyclicity criterion for general families of weighted backward shifts \cite[Theorem 2.17]{CEMM}. This construction is based on the following result, which arises from a closer inspection of the proof of Theorem 4.3 in \cite{MouMun}. To state it, we first introduce the following specific decay property for a complex random variable $X$ supported on the entire complex plane $\mathbb{C}$:
\begin{equation}\label{NikulaDecayCond}
	\hbox{ for some }\beta>0,\quad 
	\limsup\limits_{r\rightarrow \infty} \left((\log r)^{1+\beta}\mathbb{P}(\vert X\vert\geq r)\right)<+\infty.
\end{equation}

\begin{theorem}\label{gdnbre} Let $1\leq p<\infty$. Let $(u_k)$ be an admissible sequence of polynomials in $\ell_p$ such that there exist $C>0$ and $0<\tau<1$ satisfying: for all $k\in\mathbb{N}$, $\Vert u_k\Vert_p\leq C \tau^k$. Let $X$ be a complex random variable whose support is the whole complex plane satisfying the decay condition (\ref{NikulaDecayCond}). Let $(X_k)$ be a sequence of 
	independent copies of $X.$ Then, for almost all realizations of the sequence $(X_n)$, the element $v=\sum_{k\geq 0}X_ku_k$ belongs to $E_p[\boldsymbol{u}]\subset\ell_p$. Moreover, for every $h$ in $c_{00}$ and $\eta >0$, almost surely, there exists a realization-dependent subset $A\subseteq\mathbb{N}$ with ${\rm{dens}}(A)>0$ such that, for all $n\in A$, $\Vert T_{n}(B{[\boldsymbol{u}]}^n)(v)-h\Vert_p<\eta$. 
\end{theorem}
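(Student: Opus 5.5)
We split the proof into convergence and approximation. For the convergence we first note that, because $\Vert u_k\Vert_p\leq C\tau^k$, it is enough to show that almost surely $\sum_k |X_k|\tau^k<\infty$. By (\ref{NikulaDecayCond}) there is a constant $K$ with $\mathbb{P}(|X|\geq r)\leq K(\log r)^{-1-\beta}$ for $r$ large. Take $r=\rho^{k}$ with $1<\rho<1/\tau$. Then
\[
\mathbb{P}(|X_k|\geq \rho^k)\lesssim (k\log\rho)^{-1-\beta},
\]
and this is summable in $k$. Borel--Cantelli therefore gives $|X_k|<\rho^k$ for all $k$ large, almost surely. Hence $\sum_k|X_k|\Vert u_k\Vert_p\lesssim\sum_k(\rho\tau)^k<\infty$, and so $v\in E_p[\boldsymbol{u}]$ almost surely.

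For the approximation, fix $h=\sum_{k=0}^d\tilde h_ku_k\in c_{00}$ and $\eta>0$. We write
\[
T_n(B[\boldsymbol{u}]^n)(v)-h=\sum_{k=0}^{d}(X_{k+n}-\tilde h_k)u_k+\sum_{k=d+1}^{n}X_{k+n}u_k .
\]
The head is made small by asking that $|X_{k+n}-\tilde h_k|<\delta$ for $0\le k\le d$, with $\delta$ small compared with $\eta/\sum_k\Vert u_k\Vert_p$. For the tail we fix $N_0\ge d$ and split it into two ranges. On $d<k\le N_0$ we ask that $|X_{k+n}|<\delta'$, which again is an event of positive probability because $X$ has full support. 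On $k>N_0$ we use the almost sure bound from the first step, namely $|X_{k+n}|<\rho^{k+n}$ for $n$ large. This alone is not good enough, since $\sum_{k>N_0}\rho^{k+n}\tau^k$ grows with $n$. We therefore need a bound on $|X_{k+n}|$ that is uniform in $n$ and compatible with the geometric decay $\tau^k$. We take the event
\[
F_n=\{|X_{k+n}|\le \sigma^{k}\text{ for all }k>N_0\},\qquad 1<\sigma<1/\tau .
\]
On $F_n$ the far tail is at most $C\sum_{k>N_0}(\sigma\tau)^k$, which is small once $N_0$ is large. The logarithmic decay (\ref{NikulaDecayCond}) only gives
\[
\mathbb{P}(|X|>\sigma^k)\lesssim k^{-1-\beta},
\]
but this is summable, so $\mathbb{P}(F_n)\ge 1-\sum_{k>N_0}\mathbb{P}(|X|>\sigma^k)>0$, and this lower bound is independent of $n$. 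This is precisely where the exponent $1+\beta>1$ in (\ref{NikulaDecayCond}) is used.

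The main obstacle is to pass from "each event $G_n$ has probability at least $c>0$" to "the set of good $n$ has positive density almost surely". Here $G_n$ is the intersection of the head, middle and tail conditions, and the $G_n$ are not independent. Each one involves the whole tail $(X_{j})_{j\ge n}$, and $G_n$, $G_{n'}$ overlap. We plan to restore independence by truncation. Replace $F_n$ by
\[
F_n^{(L)}=\{|X_{k+n}|\le\sigma^k,\ N_0<k\le L\}
\]
and handle the indices $k>L$ with the almost sure bound $|X_j|<\rho^j$. That bound contributes at most $\sum_{k>L}\rho^{k+n}\tau^k$, which is small when $L$ is a large multiple of $n$ (for instance $L\ge \kappa n$ with $\kappa$ depending on $\rho\tau$). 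To organise this we restrict to $n$ in geometric blocks $[M^j,2M^j]$ and use $n$'s spaced by at least $L+d$ inside lacunary windows. The resulting truncated events depend on disjoint sets of coordinates, so they are independent with probability at least $c/2$. The strong law of large numbers, or a Chernoff bound followed by Borel--Cantelli on each block, then shows that a positive proportion of the sampled $n$ are good, almost surely. Because of the sparse sampling, the sampled set has positive natural density only if the spacing is bounded. To keep the spacing bounded we plan to compare $|X_{k+n}|$ directly with $\sigma^{k}$ only up to $k\le N_1$, with $N_1$ fixed. We then argue that the remaining tail $\sum_{k>N_1}|X_{k+n}|\tau^k$ is stationary in $n$ and has finite expectation after a truncation of $\log|X|$. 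An ergodic theorem for this stationary sequence, which is a function of the i.i.d. shift, then shows that the set of $n$ where the whole expression is $<\eta$ has a natural density equal to its positive probability. This ergodic-theorem route (Birkhoff for the Bernoulli shift applied to the indicator of $G_n$) is what we would try first. It directly yields $\mathrm{dens}(A)=\mathbb{P}(G_0)>0$ almost surely, once we check that $G_0$ has positive probability, which follows from the estimates above.
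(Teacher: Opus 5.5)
Your proof is correct, but the step that produces positive density is different from the paper's. The convergence of $v$ and the shape of the good event agree with the paper's events $B_{k,\eta}(h)$, which use thresholds $\rho_j\ge\delta(1+\varepsilon)^j$. In both, $X_n,\dots,X_{n+d}$ are close to the coefficients of $h$, the tail obeys geometric thresholds $|X_{k+n}|\le\sigma^k$ with $\sigma\tau<1$, and positivity comes from full support plus $\sum_k\mathbb{P}(|X|>\sigma^k)<\infty$.

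The paper keeps the tail constraints only for $d<j\le k$, matching the truncation in $T_k$. Its events therefore depend on the window $X_k,\dots,X_{2k}$, and their probabilities vary with $k$. The almost sure convergence $Y_k\to p_{d+1}q_{d+1}$ is imported from \cite{MouMun}. That is a second-moment law of large numbers: mean and variance of $Y_k$, Chebyshev, Borel--Cantelli along a sparse subsequence, then interpolation, as written out in the proof of Theorem \ref{gdnbre2}.

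You instead impose the constraints for all $k>N_0$. This only shrinks the event, since the truncated sum sees $k\le n$ only. It also makes $G_n=\theta^{-n}G_0$ for the shift $\theta$ of the i.i.d.\ sequence, so Birkhoff's theorem for the ergodic Bernoulli shift gives natural density $\mathbb{P}(G_0)$ at once. Nothing is lost: the paper's limit $p_{d+1}q_{d+1}$ is exactly the probability of such an untruncated event.

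What each route buys:
\begin{itemize}
\item Yours is shorter and avoids all covariance bookkeeping.
\item Yours does not care about the truncation length. The same argument would give Theorem \ref{gdnbre2} without a growth condition like (\ref{condsuite}), and even frequent approximation by $B[\boldsymbol{u}]^n(v)$ itself; this is the classical ``ergodic invariant measure with full support'' mechanism.
\item The paper's route stays within elementary probability and describes the frequency of the truncated events themselves.
\end{itemize}

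For the write-up:
\begin{itemize}
\item Drop the lacunary-sampling detour, which, as you observed, cannot give positive density.
\item Drop the remark about integrability after truncating $\log|X|$. Birkhoff is applied to an indicator, so no moment is needed.
\item State explicitly that $G_n$ is defined with the untruncated tail, so that it is a shift of $G_0$.
\end{itemize}
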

	
\begin{proof} This result follows from the proof of Theorem 4.4 of \cite{MouMun}. We keep the notations of \cite{MouMun}. 
	Let us define $\varepsilon_n:=C(1+\varepsilon)^n\tau^{n},$ with $\varepsilon>0$ satisfying $\tau(1+\varepsilon)<1$ so that $\sum\varepsilon_n<\infty$. Moreover we get $\frac{\varepsilon_k}{\Vert u_k\Vert_p}\geq (1+\varepsilon)^k$ and the condition (\ref{NikulaDecayCond}) ensures that $\sum_{k\geq 0}\mathbb{P}(\vert X\vert\geq \frac{\varepsilon_k}{\Vert u_k\Vert_p})<\infty$. According to Lemma 4.2 of \cite{MouMun}, for almost all realizations of the sequence $(X_n)$ the random vector $v=\sum_{n\geq 0}X_nu_n$ belongs to $E_p[\boldsymbol{u}]\subset \ell_p$. Let $h$ be in $c_{00}$. We write $h=\sum_{n=0}^dh_nu_n$. Set, for all $k\geq d+1$,  
	$$B_{k,\eta}(h):=\left\{\sum_{j=0}^d\vert X_{j+k}-h_j\vert\Vert u_j\Vert_{p}<\eta/2
	\hbox{ and }\vert X_{j+k}\vert\leq \rho_j,\ j=d+1,\dots,k \right\},$$
	where $\rho_j=\delta\frac{\varepsilon_j}{\Vert u_j\Vert_{p}},$ with $0<\delta <1$ such that $\sum_{j\geq 0}\rho_j\Vert u_j\Vert_{p}=\delta\sum_{j\geq 0}\varepsilon_j<\eta/2.$ Now consider 
	$$p_{d+1}:=\mathbb{P}\left(\sum_{j=0}^d\vert X_{j+k}(\omega)-h_j\vert\Vert u_j\Vert_{p}\leq\eta/2
	\right)
	\hbox{ and }q_{d+1}:=\prod_{j\geq d+1}\mathbb{P}(\vert X\vert\leq \rho_j).$$
	Note that $p_{d+1}q_{d+1}>0$ because the distribution of $X$ has full support on the complex plane and the series $\sum_{j\geq d+1}\mathbb{P}(\vert X\vert >\rho_j)$ converges by condition (\ref{NikulaDecayCond}). Let us define the random variable
	$$Y_k=\frac{1}{k+1}\sum_{j=0}^k \mathds{1}_{B_{j,\eta}(h)}.$$ According to the proof of Theorem 4.4 of \cite{MouMun} 
	\[
	Y_k\rightarrow p_{d+1}q_{d+1}\hbox{ almost surely as } k\to\infty .
	\]
	This yields the desired result. Indeed, for a fixed realization $\omega$, we observe that 
	\[	
	Y_{k}(\omega) = \frac{\# \{j \leq k : \omega\in B_{j,\eta}(h)\}}{k+1}.
	\]
	By triangle inequality if $\omega\in B_{j,\eta}(h)$, then  $\Vert T_{j}(B{[\boldsymbol{u}]}^j)(v)-h\Vert_p<\eta$. It follows that, almost surely, there exists a subset $A\subseteq \mathbb{N}$ with ${\rm{dens}}(A)>0$ such that, for all $n\in A$, $\Vert T_{n}(B{[\boldsymbol{u}]}^n)(v)-h\Vert_p<\eta$. 
\end{proof}

We will also require the following basic fact, which allows us to estimate the asymptotic behavior of a sequence of independent and identically distributed (i.i.d.) random variables in $L^p$. 

\begin{lemma}\label{estim_var} Let $1\leq p<\infty$. Let $(X_k)$ be a sequence of i.i.d. random variables in $L^p$. Then, for all $\varepsilon>0$, there exists a positive integer $k_{\varepsilon}$ such that, for all $k\geq k_{\varepsilon}$, $\vert X_k\vert\leq k^{\frac{1}{p}+\varepsilon}$ almost surely.  
\end{lemma}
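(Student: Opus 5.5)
The plan is a Borel--Cantelli argument combined with Markov's inequality. The statement should be read as follows: almost surely, there is a (random) $k_\varepsilon$ such that $\vert X_k\vert\leq k^{\frac1p+\varepsilon}$ for all $k\geq k_\varepsilon$. So it suffices to show that the events $E_k:=\{\vert X_k\vert> k^{\frac1p+\varepsilon}\}$ occur only finitely often with probability one. By the first Borel--Cantelli lemma, this reduces to proving $\sum_{k\geq1}\mathbb{P}(E_k)<\infty$. Independence is not even needed for this direction; only the identical distribution is used.

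To bound $\mathbb{P}(E_k)$, I use the fact that all $X_k$ have the same law as $X_1\in L^p$. Markov's inequality applied to $\vert X_1\vert^p$ gives
\[
\mathbb{P}(E_k)=\mathbb{P}\bigl(\vert X_1\vert^p> k^{1+p\varepsilon}\bigr)\leq \frac{\mathbb{E}\vert X_1\vert^p}{k^{1+p\varepsilon}}.
\]
Since $1+p\varepsilon>1$, the right-hand side is summable in $k$. Hence $\sum_k\mathbb{P}(E_k)<\infty$.

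Borel--Cantelli then yields $\mathbb{P}(\limsup_k E_k)=0$. Thus for almost every $\omega$ there is an index $k_\varepsilon(\omega)$ beyond which $\vert X_k(\omega)\vert\leq k^{\frac1p+\varepsilon}$. If one also wants a single full-measure event valid for all $\varepsilon>0$ at once, it suffices to intersect over $\varepsilon=1/s$, $s\in\mathbb{N}$, exactly as in Lemma \ref{lemmefondamproba}, since the bound is monotone in $\varepsilon$.

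There is no real obstacle here. The only delicate point is interpretive: $k_\varepsilon$ must be allowed to depend on the realization. It cannot be deterministic in general, since for a random variable with unbounded support each fixed $k$ has $\mathbb{P}(E_k)>0$.
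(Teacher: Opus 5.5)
Your proof is correct and matches the paper's argument: Markov's inequality applied to $\vert X_k\vert^p$ at level $k^{1+p\varepsilon}$ gives a summable bound, and the first Borel--Cantelli lemma finishes. Your remark that $k_\varepsilon$ must be allowed to depend on the realization is the right reading of the statement, which the paper leaves implicit.
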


\begin{proof} Using Markov inequality, we get, for all positive integers $k$, 
	$$\mathbb{P}(\vert X_k\vert^p\geq k^{1+p\varepsilon})\leq \frac{\mathbb{E}[\vert X_k\vert^p]}{k^{1+p\varepsilon}}.$$	
	Thus the series $\sum_{k\geq 0}\mathbb{P}(\vert X_k\vert^p\geq k^{1+p\varepsilon})$ converges and Borel-Cantelli lemma ensures that 
	$$\mathbb{P}\left(\limsup \left(\vert X_k\vert\geq k^{\frac{1}{p}+\varepsilon}\right)\right)=0.$$ 	
\end{proof}

\noindent Now we are ready to prove the following result. 

	\begin{theorem}\label{weightedshiftscounta} Let $1\leq p<\infty$ and $\boldsymbol{w(i)}=(w_n(i))_{n\in\mathbb{N}}$, $i\in \mathbb{N}$, be countably many weights for which every weighted backward shift $B_{\boldsymbol{w(i)}}$, $i\in \mathbb{N}$, is a continuous operator on the complex Banach space $\ell_p$. We assume that there exist a weight $\omega=(\omega_n)_{n\in\mathbb{N}}$, constants $M> 1$ and $0<\eta < 1$ and a constant $C>0$, such that for any $i\in \mathbb{N}$ and any $n\geq 0$, $m\geq 1$,
		\begin{enumerate}[(i)]
			\item \label{lablab1}the series $\sum_{k \geq 1}\vert\omega_1\ldots\omega_{k}\vert^{-p}$ is convergent;
			\item \label{lablab2}$\vert \omega_n\ldots \omega_{n+m} \vert \leq C\eta^m \vert w_n(i)\ldots w_{n+m}(i) \vert$;
			\item \label{lablab3}$C^{-1}M ^{-m}\leq \vert w_n(i)\ldots w_{n+m}(i) \vert \leq CM^m$.
		\end{enumerate}
		Let $(X_j)$ be a sequence of i.i.d. complex random variables in $L^p$. Then there exists a positive integer $m$ such that the vector $Z=\sum\limits_{j\geq m}\frac{X_j}{w_1\left(\psi(\lfloor\frac{\log (j)}{\log(m)}\rfloor)\right)\dots w_j\left(\psi(\lfloor\frac{\log (j)}{\log(m)}\rfloor)\right)}e_j $ is almost surely a common frequently hypercyclic vector for the family $(B_{\boldsymbol{w(i)}})_{i\in \mathbb{N}}$.
	\end{theorem}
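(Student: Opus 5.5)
The plan is to recognise, inside the orbit of $Z$ under a fixed $B_{\boldsymbol{w(i)}}$, the truncated shifted vectors $T_N(B[\boldsymbol{u}]^N)(v)$ of Theorem \ref{gdnbre}, for a suitable $\boldsymbol{u}$ and $v$ depending on $i$. Write $W_j(i):=w_1(i)\cdots w_j(i)$. Then $Z=\sum_j z_je_j$ with $z_j=X_j/W_j(\psi(n))$ whenever $m^n\le j<m^{n+1}$, so on the block $P_{n,m}$ the vector $Z$ is built with the weight of index $\psi(n)$.

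\emph{Preliminary estimates.} By (\ref{lablab1}), $|\omega_1\cdots\omega_k|\to\infty$, so $|\omega_1\cdots\omega_k|\ge c>0$ for all $k$. Then (\ref{lablab2}) gives
$$|W_k(i)|^{-1}\le C c^{-1}\eta^{k}\quad\text{uniformly in } i.$$
By Lemma \ref{estim_var}, almost surely $|X_j|\le j^{1/p+\varepsilon}$ for $j$ large. Together these give $|z_j|\lesssim j^{1/p+\varepsilon}\eta^j$, so $Z\in\ell_p$ almost surely.

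\emph{Step 1: the main part of the orbit.} Fix $i$ and put $u_k:=e_k/W_k(i)$. This sequence is admissible and satisfies $\|u_k\|_p\le C'\eta^k$. The decay condition (\ref{NikulaDecayCond}) follows from $X\in L^p$ by Markov's inequality. (Theorem \ref{gdnbre} also needs $X$ to have full support; I would take this as implicitly assumed.) So Theorem \ref{gdnbre} applies to $v=\sum_k X_ku_k$: for $h\in c_{00}$ and $\delta>0$, almost surely there is $A$ with $\mathrm{dens}(A)>0$ such that $\|\sum_{k\le N}X_{k+N}e_k/W_k(i)-h\|_p<\delta$ for all $N\in A$.

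Choose integers $\gamma\ge2$ and $m>2\gamma$, with $m$ large in terms of $\log M/\log(1/\eta)$ as required in Step 2. Lemma \ref{lemma_combination_density} gives that
$$A_i:=A\cap\bigcup_{n\in R_i}[m^n,\gamma m^n]$$
has positive lower density. For $N\in A_i$ with $N\in[m^n,\gamma m^n]$ and $\psi(n)=i$, every index $k+N$ with $k<m^{n+1}-N$ stays in block $n$. For those $k$ the $k$-th coordinate of $B_{\boldsymbol{w(i)}}^NZ$ is exactly
$$z_{k+N}\,w_{k+1}(i)\cdots w_{k+N}(i)=X_{k+N}/W_k(i).$$
The coordinates $k\le N$ therefore reproduce $T_N(B[\boldsymbol{u}]^N)(v)$. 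The coordinates $N<k<m^{n+1}-N$ contribute at most $\sum_{k>N}(k+N)^{1/p+\varepsilon}\eta^k$, which tends to $0$ as $N\to\infty$.

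\emph{Step 2: the tail (main obstacle).} The remaining coordinates have $k+N\ge m^{n+1}$, and there the block weight $i'$ differs from $i$, so the mismatch has to be absorbed. The coordinate is $X_{k+N}\,w_{k+1}(i)\cdots w_{k+N}(i)/W_{k+N}(i')$. Using the upper bound in (\ref{lablab3}) on the numerator and (\ref{lablab2}) on the denominator, its modulus is
$$\lesssim (k+N)^{1/p+\varepsilon}M^N\eta^{k+N}.$$
Since $N\le\gamma m^n\le(\gamma/m)(k+N)$, taking $m$ so large that $M^{\gamma/m}\eta^{1/2}\le1$ gives the bound $(k+N)^{1/p+\varepsilon}\eta^{(k+N)/2}$. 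Its $\ell_p$-sum over $k+N\ge m^{n+1}$ tends to $0$ as $n\to\infty$. This fixes $m$, independently of $i$.

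\emph{Conclusion.} Hence for each $i$, $h\in c_{00}$ and $\delta>0$, almost surely $\{N:\|B^N_{\boldsymbol{w(i)}}Z-h\|_p<2\delta\}$ contains $A_i$ minus finitely many elements, so it has positive lower density. Lemma \ref{lemmefondamproba}, applied with $Z_{i,N}=B_{\boldsymbol{w(i)}}^NZ$, then yields almost surely a single $Z$ that is frequently hypercyclic for every $B_{\boldsymbol{w(i)}}$ simultaneously. The delicate point is Step 2: the choice of $m$ must make the cross-block terms negligible uniformly in $i$, and this is where the two-sided geometric control (\ref{lablab2})--(\ref{lablab3}) is essential.
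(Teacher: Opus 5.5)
Your proposal is correct and follows the paper's proof essentially step for step. You use the same splitting of $B^N_{\boldsymbol{w(i)}}Z$ into three pieces: the part reproducing $T_N(B[\boldsymbol{u}]^N)(v)$ from Theorem \ref{gdnbre}, the in-block remainder, and the cross-block tail. These are controlled via Lemma \ref{estim_var} and a choice of $m$ of order $\gamma\log M/\log(1/\eta)$, and the argument concludes with Lemmas \ref{lemma_combination_density} and \ref{lemmefondamproba}.

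Your tail bound is slightly leaner than the paper's. You apply (ii) to the whole product $W_{k+N}(i')$ and need only the upper half of (iii), whereas the paper splits off $w_{j-l+1}(i')\cdots w_j(i')$, uses both halves of (iii), and pays a factor $M^{2(l-1)}$. You are also right that Theorem \ref{gdnbre} requires $X$ to have full support; the statement omits this assumption, and the paper's proof uses it silently.
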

	
	\begin{proof} We define $C_{\boldsymbol{\omega}}$ as $C_{\boldsymbol{\omega}}:=(\sum\limits_{k \geq 1}\vert\omega_1\ldots\omega_{k}\vert^{-p})^{1/p}$. Let us first define 
	$${Z}_k:=\sum_{n\geq 1}\frac{X_n}{w_1(k)\dots w_n(k)}e_n=\sum_{n\geq 0} P_{n,m}(Z_k).$$	
	Observe that 
	$$Z:=\sum_{n\geq 1}\sum_{j=m^n}^{m^{n+1}-1}\frac{X_j}{w_1(\psi(n))\ldots w_j(\psi(n))}e_j=\sum_{n\geq 1}P_{n,m}(Z_{\psi(n)}).$$
	The vector $Z$ is thus composed of the blocks of the vectors $Z_k$ arranged in a specific order. We shall first show that these blocks inherit the frequent approximation property established in Theorem \ref{gdnbre}. For all $k\geq 1$, we set the sequence $\boldsymbol{u^{(k)}}=(u^{(k)}_j)$, such that $u_{0}^{(k)}=e_0$ and, for all $j\geq 1$, $u_{j}^{(k)}=\frac{e_j}{w_1(k)\ldots w_j(k)}$. Clearly we have, for all $k\geq 1$, $B[\boldsymbol{u^{(k)}}]=B_{\boldsymbol{w(k)}}$ and 
	$$\Vert u_{j}^{(k)}\Vert_p\leq C\eta^{-1}C_{\boldsymbol{\omega}}\eta^j.$$
	Now let $k\geq 1$, $h$ be in $c_{00}$ and $\varepsilon>0$. Since the random variables $X_n$ belong to $L^p$, they satisfy the decay condition (\ref{NikulaDecayCond}) by Markov's inequality. According to Theorem \ref{gdnbre}, almost surely, there exists a subset $A_k$ of $\mathbb{N}$ with $\rm{dens}(A_k)>0$ such that 
	\begin{equation}\label{equpfmain1}
		\Vert T_{n}(B_{\boldsymbol{w(k)}}^n)({Z_k})-h\Vert_p<\varepsilon.
	\end{equation}	
		Let $\gamma\geq 2$ be an integer. Let us choose $m\in\mathbb{N}$ such that 
	\begin{equation}\label{estimation_bloc0}
		m\geq\max\left(2\gamma, \frac{\gamma(4\log (M)-\log(\eta))}{\vert\log(\eta)\vert}\right)+1.
	\end{equation}
For all $n\in R_k$ and for every $l\in [m^n,\gamma m^n]\cap\mathbb{N}$, we get, since $2\gamma m^n<m^{n+1}$,
\begin{align}\label{equpfmain4}
	\begin{split}
		B_{\boldsymbol{w(k)}}^l(Z)=&\sum_{j=l}^{2l}\frac{X_j}{w_1(k)\ldots w_{j-l}(k)}e_{j-l} +\sum_{j=2l+1}^{m^{n+1}-1}\frac{X_j}{w_1(k)\ldots w_{j-l}(k)}e_{j-l}\\&+
		\sum_{j\geq m^{n+1}}X_j\frac{ w_{j-l+1}(k)\ldots w_j(k)}{w_1\left(\psi(\lfloor\frac{\log (j)}{\log(m)}\rfloor)\right)\ldots w_{j}\left(\psi(\lfloor\frac{\log (j)}{\log(m)}\rfloor)\right)}e_{j-l}\\:=&\ U_{n,l,1}+U_{n,l,2}+U_{n,l,3}.
	\end{split}
\end{align}
Our goal is to estimate $\Vert B_{\boldsymbol{w(k)}}^l(Z)-h\Vert_p$, making use of the following properties: for all $n\in R_k$ and for any $l\in [m^n,\gamma m^n]\cap\mathbb{N}$, 
	\begin{equation}\label{equpfmain2}
	\forall j\geq l,\quad \left\vert\frac{1}{w_1(k)\ldots w_{j-l}(k)}\right\vert=\left\vert\frac{\omega_1\ldots\omega_{j-l}}{w_1(k)\ldots w_{j-l}(k)}\frac{1}{\omega_1\ldots\omega_{j-l}}\right\vert\leq C \frac{\eta^{j-l-1}}{\omega_1\ldots\omega_{j-l}}
	\end{equation}	
and 
\begin{align}\label{equpfmain3}
				\begin{split}\displaystyle\forall i\in\mathbb{N},\ \forall j\geq m^{n+1},\quad \left\vert\frac{w_{j-l+1}(k)\ldots w_j(k)}{ w_1(i)\ldots w_j(i)}\right\vert&=\displaystyle
			\left\vert\frac{w_{j-l+1}(k)\ldots w_j(k)}{w_{j-l+1}(i)\ldots w_j(i)}\frac{\omega_1\ldots\omega_{j-l}}{w_1(i)\ldots w_{j-l}(i)}\frac{1}{\omega_1\ldots\omega_{j-l}}\right\vert\\&
			\leq\displaystyle C^3M^{2(l-1)}\eta^{j-l-1} \frac{1}{\vert \omega_1\ldots\omega_{j-l}\vert}.
			\end{split}
		\end{align}

We begin by considering the term $\Vert U_{n,l,2}\Vert_p$. In view of Lemma \ref{estim_var}, the inequality $(\ref{equpfmain2})$ and the fact that the sequence $(j^{p\varepsilon+1}\eta^{p(j-l-1)/2})_{j\geq l}$ is bounded, we get, almost surely, for all $n\in R_k$ large enough and $l\in [m^n,\gamma m^n]\cap\mathbb{N}$,
\begin{equation}\label{equpfmain5}
	\begin{array}{rcl}\Vert U_{n,l,2}\Vert_p^p &\leq&\displaystyle \sum_{j=2l+1}^{m^{n+1}-1}\frac{\vert X_j\vert^p }{\vert w_1(k)\ldots w_{j-l}(k)\vert^p}\\
		&\lesssim&\displaystyle\sum_{j=2l+1}^{m^{n+1}-1}\frac{j^{1+p\varepsilon}\eta^{p(j-l-1)}}{\vert \omega_1\ldots \omega_{j-l}\vert^p}\\
	&\lesssim&\displaystyle \eta^{pl/2}\sum_{j=2l+1}^{m^{n+1}-1}\frac{1}{\vert \omega_1\ldots \omega_{j-l}\vert^p}.
	\end{array}
\end{equation}
This implies that, almost surely, for all $n\in R_k$ large enough and $l\in [m^n,\gamma m^n]\cap\mathbb{N}$,
\begin{equation}\label{equpfmain6}
	\Vert U_{n,l,2}\Vert_p\lesssim \eta^{m^n/2}.
\end{equation}
The term $U_{n,l,3}$ is estimated using similar arguments. Indeed by Lemma \ref{estim_var}, (\ref{equpfmain3}) and recalling that the sequence $(j^{p\varepsilon+1}\eta^{p(j-l-1)/2})_{j\geq l}$ is bounded, we have, almost surely, for all $n\in R_k$ large enough and $l\in [m^n,\gamma m^n]\cap\mathbb{N}$,
\begin{equation}\label{equpfmain7}
	\begin{array}{rcl}\Vert U_{n,l,3}\Vert_p^p &\leq&\displaystyle \sum_{j\geq m^{n+1}}\vert X_j\vert^p\frac{ \vert w_{j-l+1}(k)\ldots w_j(k)\vert^p}{\left\vert w_1\left(\psi(\lfloor\frac{\log (j)}{\log(m)}\rfloor)\right)\ldots w_{j}\left(\psi(\lfloor\frac{\log (j)}{\log(m)}\rfloor)\right)\right\vert^p}\\
		&\lesssim&\displaystyle \sum_{j\geq m^{n+1}}M^{2p(l-1)}\frac{j^{1+p\varepsilon}\eta^{p(j-l-1)}}{\vert \omega_1\ldots \omega_{j-l}\vert^p}
		\\&\lesssim&\displaystyle \left(M^{2\gamma}\eta^{(m-\gamma)/2}\right)^{p m^n}\sum_{j\geq m^{n+1}}\frac{1}{\vert \omega_1\ldots \omega_{j-l}\vert^p}.
	\end{array}
\end{equation}
We deduce that, almost surely, for all $n\in R_k$ large enough and $l\in [m^n,\gamma m^n]\cap\mathbb{N}$,
\begin{equation}\label{equpfmain8}
	\Vert U_{n,l,3}\Vert_p\lesssim (M^{2\gamma}\eta^{(m-\gamma)/2})^{m^n},
\end{equation}
where $0<M^{2\gamma}\eta^{(m-\gamma)/2}<1$ thanks to (\ref{estimation_bloc0}).
Finally, observe that for all $n\in R_k$ large enough and $l\in [m^n,\gamma m^n]\cap\mathbb{N}$, 
\begin{equation}
	\sum_{j=l}^{2l}\frac{X_j}{w_1(k)\ldots w_{j-l}(k)}e_{j-l}-h=\sum_{j=0}^{l}\frac{X_{j+l}}{w_1(k)\ldots w_{j}(k)}e_{j}-h= T_{l}(B_{\boldsymbol{w(k)}}^l)({Z}_k)-h.
\end{equation}
Combining (\ref{equpfmain4}) with (\ref{equpfmain1}), (\ref{equpfmain6}) and (\ref{equpfmain8}), we find that, almost surely, there exist a positive integer $N_k$ and a subset $A_k\subseteq\mathbb{N}$ with $\hbox{dens}(A_k)>0$ such that for all $n\in A_k\cap \bigcup\limits_{j\in R_k; \atop j\geq N_k} [m^j,\ldots,\gamma m^{j}]\cap\mathbb{N}$ 
\[
\Vert B_{\boldsymbol{w(k)}}^n(Z)-h\Vert_p<3\varepsilon.
\]
Since Lemma \ref{lemma_combination_density} guarantees that $\underline{\hbox{dens}}(A_k\cap \bigcup\limits_{n\in R_k;\atop n\geq N_k} [m^n,\ldots,\gamma m^{n}]\cap\mathbb{N}) >0$, Lemma \ref{lemmefondamproba} allows us to conclude that the vector $Z$ is almost surely a common frequently hypercyclic vector for the family $(B_{\boldsymbol{w(i)}})_{i\in \mathbb{N}}$.	
	\end{proof}

\begin{remark}
	\rm{The reader may notice that the assumptions $(i)$, $(ii)$ and $(iii)$ of Theorem \ref{weightedshiftscounta} are identical to those required by the criterion provided by Theorem 2.17 of \cite{CEMM}. In this sense, the result is not new. However, the construction of a common frequently hypercyclic random vector is new, and provides an alternative approach to establishing the existence of common frequently hypercyclic vectors. Here, the result applies to weighted shifts whose the product of weights exhibits geometric growth. It therefore applies here, in particular, to a family of backward shifts $\{\lambda B,\ \lambda\in\Lambda\}$ provided that the set $\{\vert\lambda\vert:\ \lambda\in\Lambda\}$ is a countable, relatively compact subset of $(1,\infty)$.}
\end{remark}
	
\section{A general randomized criterion}\label{abstheory} In this section, we generalize Theorem \ref{weightedshiftscounta} by establishing a general criterion for the existence of common random frequently hypercyclic vectors for weighted backward shifts on $\ell_p$, by relaxing the hypotheses $(i)$, $(ii)$ and $(ii)$ of that theorem to allow for slower growth of the weight products. We shall see that the resulting statement generalizes Remark 2.19 in \cite{CEMM}, which established the existence of common frequently hypercyclic vectors for weighted backward shifts, provided the quotients of the weight products remain uniformly bounded from both above and below.\\

We start by introducing some notations. Let $(\alpha_k)_{k \in \mathbb{N}_0}$ be an increasing sequence of positive real numbers such that $\alpha_k \to\infty$ as $k\to\infty$ and 
	\begin{equation}\label{condsuite} 
	\hbox{ for some } \tilde {\varepsilon}>0,\quad \ \sum_{l=0}^k\alpha_l=\mathcal{O}\left(\frac{k^2}{(\log(k))^{1+\tilde{\varepsilon}}}\right).
	\end{equation} 
Condition (\ref{condsuite}) ensures that the sequence $\left(\frac{\log(n)\alpha_n}{n}\right)$ tends to $0$. Indeed, since $(\alpha_k)$ is increasing, we have
\[
n \alpha_n\leq\sum_{k=n+1}^{2n}\alpha_k\lesssim \frac{n^2}{(\log(n))^{1+\tilde{\varepsilon}}}.
\]
Consequently, we obtain $\frac{\log(n)\alpha_n}{n}\lesssim (\log(n))^{-\tilde{\varepsilon}}\rightarrow 0$, as $n\to\infty$.
\vskip3mm

\noindent \textit{Notations.} For the sake of clarity, throughout this section, given a weight sequence $(w_n)$, we denote its partial products by $W_n:=w_1\ldots w_n$, ($n=1,2,\dots$) with the convention $W_0:=1$. \\

A more general version of Theorem \ref{gdnbre} is now presented, assuming better integrability for the random variables. This result shall facilitate the generalization of Theorem \ref{weightedshiftscounta} to cases involving slower growth of the weight products.
		 
	\begin{theorem}\label{gdnbre2} Let $1\leq p<\infty$. Let $\boldsymbol{w}=(w_n)_{n\in\mathbb{N}}$ be a sequence of weights satisfying: there exist $C,\tau>0$ such that $\vert W_n\vert^{-p}\leq C n^{-1}[\log(n+1)]^{-(1+\frac{p}{2}+\tau)}$. Let $(\alpha_k)$ be a strictly increasing sequence of positive real numbers such that $\alpha_k \to\infty$ as $k\to\infty$ satisfying condition (\ref{condsuite}). Let $X$ be a Gaussian complex random variable such that the support of the distribution of $X$ is whole $\mathbb{C}$ and $(X_n)$ be a sequence of independent copies of $X.$ For almost every $\omega\in\Omega$, the element $v=\sum_{k\geq 1}X_k W_k^{-1}e_k$ belongs to $\ell_p$. Moreover, for every $h$ in $c_{00}$ and $\eta >0$, almost surely, there exists a realization-dependent subset $A\subseteq \mathbb{N}$ with ${\rm{dens}}(A)>0$ such that, for all $n\in A$, $\Vert T_{\lfloor\alpha_n\rfloor}(B_{\boldsymbol{w}}^n)(v)-h\Vert_p< \eta$.
	\end{theorem}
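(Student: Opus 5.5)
The plan is to adapt the proof of Theorem \ref{gdnbre}, that is, the argument of \cite{MouMun}. The exponential moment decay of the Gaussian will compensate for the weaker (polynomial-logarithmic) decay of $\vert W_n\vert^{-p}$. A second-moment argument will use condition (\ref{condsuite}) to control the dependence created by the window length $\lfloor\alpha_n\rfloor$.

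First, $v\in\ell_p$ almost surely, since
$$\mathbb{E}\sum_k\vert X_k\vert^p\vert W_k\vert^{-p}=\mathbb{E}\vert X\vert^p\sum_k\vert W_k\vert^{-p}<\infty$$
by the hypothesis on $W_n$. Next, since $B_{\boldsymbol{w}}^ne_{k+n}=\frac{W_{k+n}}{W_k}e_k$, we have
$$T_{\lfloor\alpha_n\rfloor}(B_{\boldsymbol{w}}^n)(v)=\sum_{k=0}^{\lfloor\alpha_n\rfloor}X_{k+n}W_k^{-1}e_k.$$
Write $h=\sum_{j=0}^d h_je_j$. For $n$ large, so that $\alpha_n>d$, define the event
$$B_n:=\Big\{\sum_{j=0}^d\vert X_{n+j}W_j^{-1}-h_j\vert<\eta/2\Big\}\cap\bigcap_{j=d+1}^{\lfloor\alpha_n\rfloor}\{\vert X_{n+j}\vert\le\rho_j\},$$
with thresholds $\rho_j:=\delta[\log(j+1)]^{\frac12+\frac{\tau}{2p}}$. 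This choice gives
$$\sum_{j>d}\rho_j^p\vert W_j\vert^{-p}\lesssim\delta^p\sum_j j^{-1}[\log(j+1)]^{-1-\tau/2}<\infty.$$
Taking $\delta$ small makes the tail contribution at most $(\eta/2)^p$, so $\omega\in B_n$ implies $\Vert T_{\lfloor\alpha_n\rfloor}(B_{\boldsymbol{w}}^n)(v)-h\Vert_p<\eta$. The Gaussian tail gives $\mathbb{P}(\vert X\vert>\rho_j)\le e^{-c\rho_j^2}=e^{-c\delta^2[\log(j+1)]^{1+\tau/p}}$, which is summable. Hence $q:=\prod_{j>d}\mathbb{P}(\vert X\vert\le\rho_j)>0$. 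By full support, $p_d:=\mathbb{P}(\sum_{j\le d}\vert X_jW_j^{-1}-h_j\vert<\eta/2)>0$. Independence then gives $\mathbb{P}(B_n)=p_d\prod_{j=d+1}^{\lfloor\alpha_n\rfloor}\mathbb{P}(\vert X\vert\le\rho_j)$, which decreases to $p_dq>0$.

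Set $S_N:=\sum_{n\le N}\mathds 1_{B_n}$ and $Y_N:=S_N/N$. Then $\mathbb{E}Y_N\to p_dq$, and it remains to prove $Y_N-\mathbb{E}Y_N\to0$ almost surely. The event $B_n$ depends only on $X_n,\dots,X_{n+\lfloor\alpha_n\rfloor}$. Since $(\alpha_k)$ is increasing, $B_n$ and $B_{n'}$ are independent whenever $n<n'$ and $n'>n+\alpha_n$. Each term of $\mathrm{Var}(S_N)=\sum_{n,n'\le N}\mathrm{Cov}(\mathds 1_{B_n},\mathds 1_{B_{n'}})$ is at most $1$, so it suffices to count the pairs $n<n'$ with $n'\le n+\alpha_n$. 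This gives
$$\mathrm{Var}(S_N)\le N+2\sum_{n\le N}\alpha_n=\mathcal{O}\big(N^2/(\log N)^{1+\tilde\varepsilon}\big)$$
by (\ref{condsuite}), hence $\mathrm{Var}(Y_N)=\mathcal{O}((\log N)^{-1-\tilde\varepsilon})$.

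The main obstacle is that this variance decay is too weak for Borel--Cantelli along all $N$. A geometric subsequence would only give a lower density bound, whereas natural density is required. I would therefore use the sparse-but-slowly-growing subsequence $N_k:=\lfloor\exp(k^{s})\rfloor$ with $s=\frac{1+\tilde\varepsilon/2}{1+\tilde\varepsilon}<1$. Then $\log N_k\sim k^{s}$, so $\sum_k\mathrm{Var}(Y_{N_k})\lesssim\sum_k k^{-(1+\tilde\varepsilon/2)}<\infty$. Chebyshev and Borel--Cantelli then give $Y_{N_k}\to p_dq$ almost surely. Because $s<1$, $N_{k+1}/N_k\to1$. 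For $N_k\le N<N_{k+1}$, monotonicity of $S_N$ gives
$$\frac{N_k}{N_{k+1}}Y_{N_k}\le Y_N\le\frac{N_{k+1}}{N_k}Y_{N_{k+1}},$$
so $Y_N\to p_dq$ almost surely. Thus $A:=\{n:\omega\in B_n\}$ has natural density $p_dq>0$, and it has the required approximation property.
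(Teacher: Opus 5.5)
Your proof is correct and follows the paper's argument essentially step by step. It uses the same windowed events with Gaussian-tail thresholds of order $[\log(j+1)]^{\frac12+\frac{\tau}{2p}}$ and Ces\`aro for $\mathbb{E}Y_N$. It bounds the variance by counting the pairs $n<n'\le n+\lfloor\alpha_n\rfloor$ and invoking (\ref{condsuite}), then passes to the subsequence $\lfloor\exp(k^{s})\rfloor$ with $s<1$ and closes with the monotone sandwich.

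The differences from the paper are cosmetic: you fix the thresholds explicitly and bound every covariance by $1$, whereas the paper tracks the factors $q_j$ before reaching the same count. Your side remark that a geometric subsequence could only give a lower-density bound is too pessimistic, though it does not affect your argument. The same variance bound is summable along $\lfloor c^k\rfloor$ for every $c>1$, and letting $c\downarrow 1$ through countably many values also yields natural density.
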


	\begin{proof} First, since 
		\[ \mathbb{E}[\Vert v\Vert_p]\leq \mathbb{E}[\vert X\vert^p]\sum_{n\geq 1}\vert W_n\vert^{-p}<\infty,
		\]
		the random vector $\sum_{n\geq 1}X_nW_n^{-1} e_n$ belongs to $\ell_p$ almost surely.\\ 
Let $\eta>0$ and $h=\sum_{n=0}^dh_nu_n$ be in $c_{00}$. Let us define $\varepsilon_n=C n^{-1}\left[\log (n+1)\right]^{-(1+\frac{\tau}{2})}$ so that $\sum_{n\geq 1}\varepsilon_n<\infty$.  We set $\rho_n=\delta\varepsilon_n\vert W_n\vert^p,$ where we have taken $0<\delta <1$ so that 
\[
\sum_{n\geq 1}\rho_n\vert W_n\vert^{-p}<\eta/2.
\] 
It follows that $\rho_n=\delta \varepsilon_n \vert W_n\vert^p\geq \delta \left[\log (n+1)\right]^{\frac{p}{2}+\frac{\tau}{2}}$. Since $X$ is a Gaussian complex random variable, we deduce that there exists $C_1>0$ such that, for all $n\geq 1$,  
		$$\mathbb{P}\left(\vert X_n\vert^p\geq \rho_n\right)\leq e^{-C_1 (\delta \varepsilon_n \vert W_n\vert ^p)^{2/p}}\leq e^{-C_1 \delta^{\frac{2}{p}}[\log(n+1)]^{1+\frac{\tau}{p}}}.$$
		We deduce $\sum\limits_{n\geq 1} \mathbb{P}\left(\vert X\vert^p\geq \rho_n\right)<\infty$. Set 
$$B_{k,\eta,m}(h)=\left\{\sum_{j=0}^d\vert X_{j+k}-h_j\vert^p \vert W_j\vert^{-p}<\eta/2\hbox{ and }\vert X_{j+k}\vert^p\leq \rho_j,\ j=d+1,\dots,m \right\},$$ 
with $m\geq d+1$. 
%and $\rho_j=\delta\varepsilon_j\vert W_j\vert^p,$ where we have taken $0<\delta <1$ so that $\sum_{j\geq 0}\rho_j\vert W_j\vert^{-p}<\eta/2.$ 
Since the sequence of random variables $(X_n)$ is independent, it follows that, for all $k\geq d,$
$$\mathbb{P}(B_{k,\eta,m}(h))=p_{d+1}\frac{q_{d+1}}{q_{m+1}},$$
with 
$$p_{d+1}=\mathbb{P}\left(\left\{\sum_{j=0}^d\vert X_{j+k}(\omega)-h_j\vert^p\vert W_j\vert^{-p}\leq\eta/2\right\}\right)\hbox{ and }q_k=\prod_{j\geq k}\mathbb{P}(\vert X\vert^p\leq \rho_j).$$
Since the series $\sum_{j\geq 1} \mathbb{P}(\vert X\vert^p >\rho_j)$ converges, the sequence $(q_k)$ tends to $1$ as $k$ tends to $\infty$. Let us define the random variable
$$Y_k=\frac{1}{k+1}\sum_{j=0}^k \mathbf{1}_{B_{j,\eta,\lfloor\alpha_j\rfloor}(h)}.$$ 
We get 
$$\mathbb{E}[Y_k]=\frac{p_{d+1}}{k+1}\sum_{j=0}^k\frac{q_{d+1}}{q_{\lfloor\alpha_j\rfloor +1}}$$ 
and Ces\`aro's Theorem ensures that 
		\begin{equation}\label{cesaro}
			\mathbb{E}[Y_k]\rightarrow p_{d+1}q_{d+1},\quad \hbox{as } k\rightarrow\infty.
		\end{equation} 
		Moreover the following equality holds
		\begin{equation}\label{variance1}
			\begin{array}{rcl}(k+1)^2\hbox{var}[Y_k]&=&\displaystyle\sum_{l=0}^k\mathbb{P}(B_{l,\eta,\lfloor\alpha_l\rfloor}(h))+2\sum_{0\leq l<l'\leq k}\mathbb{P}(B_{l,\eta,\lfloor\alpha_l\rfloor}(h)\cap B_{l',\eta,\lfloor\alpha_{l'}\rfloor}(h))\\&&\displaystyle-{p_{d+1}^2}q_{d+1}^2\left(\sum_{j=0}^k\frac{q_{d+1}}{q_{\lfloor\alpha_j\rfloor+1}}\right)^2.\end{array}
		\end{equation}
		Observe that 
		\[\hbox{for }l+\lfloor\alpha_l\rfloor <l', \quad B_{l,\eta,\lfloor\alpha_l\rfloor}(h)\hbox{ and } B_{l',\eta,\lfloor\alpha_{l'}\rfloor}(h)\hbox{ are independent,}\]
		and ,
		\[\hbox{for }l+d<l'\leq l+\lfloor\alpha_l\rfloor,\quad  
		B_{l,\eta,\lfloor\alpha_l\rfloor}(h)\cap B_{l',\eta,\lfloor\alpha_{l'}\rfloor}(h)\subset B_{l,\eta,l'-l-1}(h)\cap B_{l',\eta,\lfloor\alpha_{l'}\rfloor}(h).\] 
		By independence, we get, for all $0\leq l<l'\leq k,$
		\begin{equation}\label{estim1}\begin{array}{rcll}
				\mathbb{P}(B_{l,\eta,\lfloor\alpha_l\rfloor}\cap B_{l',\eta,\lfloor\alpha_{l'}\rfloor})&=&\displaystyle\frac{p_{d+1}^2q_{d+1}^2}{q_{1+\lfloor\alpha_l\rfloor}q_{1+\lfloor\alpha_{l'}\rfloor}}
				&\hbox{ for }l+\lfloor\alpha_l\rfloor< l',\\&&&\\
				\mathbb{P}(B_{l,\eta,\lfloor\alpha_l\rfloor}\cap B_{l',\eta,\lfloor\alpha_{l'}\rfloor})
				&\leq&\displaystyle\frac{p_{d+1}^2q_{d+1}^2}{q_{l'-l}q_{1+\lfloor\alpha_{l'}\rfloor}}&\hbox{ for }d+l<l'\leq l+\lfloor\alpha_l\rfloor.
			\end{array}
		\end{equation}
Combining the equality $\sum\limits_{0\leq l<l'\leq k}=\sum\limits_{l=0}^{k-1}\left(\sum\limits_{l'=l+1}^{l+d}+\sum\limits_{l'=l+d+1}^{l+\lfloor\alpha_l\rfloor}+\sum\limits_{l'=l+\lfloor\alpha_l\rfloor+1}^{k}\right)$ with (\ref{variance1}) and (\ref{estim1}), we get, for all $l$ large enough, 
			\begin{equation}\label{variance2}
			\begin{array}{rcl}(k+1)^2\hbox{var}[Y_k]&\leq& \displaystyle \sum_{l=0}^k\frac{p_{d+1}q_{d+1}}{q_{\lfloor\alpha_l\rfloor +1}}+2\sum_{l=0}^{k-1}\sum_{l'=l+1}^{l+d}\mathbb{P}(B_{l,\eta,\lfloor\alpha_l\rfloor}(h)\cap B_{l',\eta,\lfloor\alpha_{l'}\rfloor}(h))\\&&\displaystyle +2\sum\limits_{l=0}^{k-1}\sum\limits_{l'=l+d+1}^{l+\lfloor\alpha_l\rfloor}\frac{p_{d+1}^2q_{d+1}^2}{q_{l'-l}q_{1+\lfloor\alpha_{l'}\rfloor}}+2\sum\limits_{l=0}^{k-1}\sum\limits_{l'=l+\lfloor\alpha_l\rfloor+1}^{k}\frac{p_{d+1}^2q_{d+1}^2}{q_{1+\lfloor\alpha_l\rfloor}q_{1+\lfloor\alpha_{l'}\rfloor}}\\&&\displaystyle-p_{d+1}^2q_{d+1}^2\left(\sum_{l=0}^k\frac{1}{q_{\lfloor\alpha_l\rfloor +1}}\right)^2\end{array}.
		\end{equation}	
Taking into account the equality 
		\[
		\left(\sum_{l=0}^k\frac{1}{q_{\lfloor\alpha_l\rfloor +1}}\right)^2=\sum_{l=0}^k\left(\frac{1}{q_{\lfloor\alpha_l\rfloor +1}}\right)^2+2\sum_{0\leq l<l'\leq k}\frac{1}{q_{1+\lfloor\alpha_l\rfloor}q_{1+\lfloor\alpha_{l'}\rfloor}},
		\]	
we obtain
		\begin{equation}\label{variance3}
			\begin{array}{rcl}(k+1)^2\hbox{var}[Y_k]&\leq& \displaystyle \sum_{l=0}^k\frac{p_{d+1}q_{d+1}}{q_{\lfloor\alpha_l\rfloor +1}}+2\sum_{l=0}^{k-1}\sum_{l'=l+1}^{l+d}\mathbb{P}(B_{l,\eta,\lfloor\alpha_l\rfloor}(h)\cap B_{l',\eta,\lfloor\alpha_{l'}\rfloor}(h))\\&&\displaystyle +2p_{d+1}^2q_{d+1}^2\sum\limits_{l=0}^{k-1}\sum\limits_{l'=l+d+1}^{l+\lfloor\alpha_l\rfloor}\frac{1}{q_{1+\lfloor\alpha_{l'}\rfloor}}\left( \frac{1}{q_{l'-l}}-  \frac{1}{q_{1+\lfloor\alpha_l\rfloor}}\right)\\&&\displaystyle-p_{d+1}^2q_{d+1}^2\sum_{l=0}^k\left(\frac{1}{q_{\lfloor\alpha_l\rfloor +1}}\right)^2\\&\leq&
				\displaystyle (k+1)+2dk+2\sum_{l=0}^{k-1}(\lfloor\alpha_l\rfloor -d). \end{array}
		\end{equation}		
We deduce
		\[
		\hbox{var}[Y_k]\leq \frac{1}{k}+\frac{2}{k^2}\sum_{l=0}^{k-1}\lfloor\alpha_l\rfloor=\mathcal{O}\left(\frac{1}{(\log(k))^{1+\tilde{\varepsilon}}}\right).
		\]
Let $\lambda>0$ and $0<\tilde{\delta}<\tilde{\varepsilon}$. Let us consider the subsequence $(\phi(k))_{k\geq 1}$ defined by, for all $k\geq 1$, $\phi(k)=\lfloor \hbox{exp}(k^{\frac{1+\tilde{\delta}}{1+\tilde{\varepsilon}}})\rfloor.$ Combining (\ref{cesaro}) with Chebyshev inequality and (\ref{variance3}), we get, for all $k$ large enough,
		$$\begin{array}{rcl}\displaystyle\mathbb{P}(\vert Y_{\phi(k)}-p_{d+1}q_{d+1}\vert >\lambda)&\leq&\displaystyle  
			\mathbb{P}(\vert Y_{\phi(k)}-\mathbb{E}(Y_{\phi(k)})\vert >\lambda/2)+
			\mathbb{P}(\vert \mathbb{E}(Y_{\phi(k)})-p_{d+1}q_{d+1}\vert >\lambda/2)\\&\leq&\displaystyle 
			\mathcal{O}\left(\frac{1}{\lambda^2 k^{1+\tilde{\delta}}}\right).\end{array}$$
Hence Borel-Cantelli Lemma ensures that
		$$Y_{\phi(k)}\to p_{d+1}q_{d+1}\hbox{ almost surely as }k\to  
		\infty.$$
Since the support of the distribution of $X$ is the whole complex plane, we have $p_{d+1}q_{d+1}>0.$ Then, for all $k\in \mathbb{N},$ there exists a positive integer 
		$n_k$ such that $\phi(n_k)\leq k<\phi(n_k+1)$. Thus we get
		\begin{equation}\label{fin}
			Y_{\phi(n_k)}\frac{\phi(n_k)}{\phi(n_k+1)}\leq Y_k\leq Y_{\phi(n_k+1)}\frac{\phi(n_k+1)}{\phi(n_k)}.
		\end{equation}
		Since $\phi(n_k)=\lfloor \hbox{exp}(n_k^{\frac{1+\tilde{\delta}}{1+\tilde{\varepsilon}}})\rfloor$ with $0<\tilde{\delta}<\tilde{\varepsilon}$, we deduce that $\frac{\phi(n_k+1)}{\phi(n_k)}$ tends to $1$. Therefore the inequality (\ref{fin}) leads to
		$$Y_k\to p_{d+1}q_{d+1}\hbox{ almost surely as } k\to \infty.$$ 
		The desired result follows by finishing the argument as in the end of the proof of Theorem \ref{gdnbre}. 
	\end{proof}

Let $1\leq p<\infty $ and $\boldsymbol{w(k)}=(w_n(k))_{n\in\mathbb{N}}$, $k\in \mathbb{N}$, be countably many weights for which every weighted backward shift $B_{\boldsymbol{w(k)}}$, $k\in \mathbb{N}$, is a continuous operator on the complex Banach space $\ell_p$. Under appropriate conditions, Theorem \ref{gdnbre2} allows us to construct a random vector that serves as a common frequently hypercyclic vector for the family of operators $B_{\boldsymbol{w(k)}}$, $k\in \mathbb{N}$. First, to apply Theorem \ref{gdnbre2}, we assume that for each $k\geq 1$ there exist constants $C_k,\tau_k>0$ such that $\vert W_n(k)\vert^{-p}\leq C_k n^{-1}[\log(n+1)]^{-(1+\frac{p}{2}+\tau_k)}$. Moreover, we suppose that, for all $j\in\mathbb{N}$, $\inf_{k\in\mathbb{N}} \vert W_j(k)\vert>0$. Let us define, for every $k\geq 1$, 
\[
{Z}_k:=\sum_{n\geq m}\frac{X_n}{W_n(k)}e_n\quad \hbox{ and }\quad Z:=\sum\limits_{j\geq m}\frac{X_j}{W_j\left(\psi(\lfloor\frac{\log (j)}{\log(m)}\rfloor)\right)}e_j=\sum_{n\geq 1}P_{n,m}(Z_{\psi(n)})
\]
where $m\geq 2$ is a positive integer to be specified later. Let also $(\alpha_j)$ be a strictly increasing sequence of positive real numbers such that $\alpha_j \to\infty$ as $j\to\infty$ satisfying condition (\ref{condsuite}).
\\

Now let $k\geq 1$, $h$ be in $c_{00}$ and $\varepsilon>0$. According to Theorem \ref{gdnbre2}, there exists a subset $A_k$ of $\mathbb{N}$ with $\rm{dens}(A_k)>0$ such that 
\begin{equation}\label{lastequpfmain1}
	\Vert T_{\lfloor\alpha_n\rfloor}(B_{\boldsymbol{w(k)}}^n)({Z_k})-h\Vert_p<\varepsilon\quad\hbox{ almost surely}.
\end{equation}	
Let $\gamma\geq 2$ be an integer. Fix a positive integer $m\geq \gamma+1$ to be specified later. For all $n\in R_k$ and for every $l\in [m^n,\gamma m^n]\cap\mathbb{N}$ large enough, we get
\begin{align}\label{equabstract}
	\begin{split}
		B_{\boldsymbol{w(k)}}^l(Z)=&\sum_{j=l}^{l+\lfloor\alpha_l\rfloor}\frac{X_j}{W_{j-l}(k)}e_{j-l} +\sum_{j=l+\lfloor\alpha_l\rfloor+1}^{m^{n+1}-1}\frac{X_j}{W_{j-l}(k)}e_{j-l}\\&+
		\sum_{j\geq m^{n+1}}\frac{X_j}{W_{j-l}(k)}\frac{ W_j(k)}{W_{j}\left(\psi(\lfloor\frac{\log (j)}{\log(m)}\rfloor)\right)}e_{j-l}\\:=&\ \sum_{j=l}^{l+\lfloor\alpha_l\rfloor}\frac{X_j}{W_{j-l}(k)}e_{j-l}+Q(n,l,k).
	\end{split}
\end{align}
Since $\log(k)\alpha_k/k\to 0$, we have for every $l\in [m^n,\gamma m^n]\cap\mathbb{N}$ large enough 
$$ l+\lfloor\alpha_l\rfloor+1\leq \gamma m^n+\lfloor\alpha_{\gamma m^n}\rfloor+1< (\gamma+1)m^n\leq m^{n+1}.$$ 
Moreover, observe that, for all $l\in [m^n,\gamma m^n]\cap\mathbb{N}$,
	\begin{align}\label{equabstract2}
		\begin{split}
			\mathbb{E}\left[\Vert Q(n,l,k)\Vert_p^p\right]\lesssim &\sum_{j=\lfloor\alpha_l\rfloor+1}^{m^{n+1}-l-1}\frac{1}{\vert W_{j}(k)\vert^p}+\sum_{j\geq m^{n+1}}\left\vert\frac{W_j(k)}{W_{j}\left(\psi(\lfloor\frac{\log (j)}{\log(m)}\rfloor)\right)}\right\vert^p\frac{1}{\vert W_{j-l}(k)\vert^p}
		\end{split}
	\end{align}
	We set 
	$$R_n{(k,l)}:=\sum_{j=\lfloor\alpha_l\rfloor+1}^{m^{n+1}-l-1}\frac{1}{\vert W_{j}(k)\vert^p}+\sum_{j\geq m^{n+1}}\left\vert\frac{W_j(k)}{W_{j-l}(k)\inf_{i}\vert W_{j}\left(i\right)\vert}\right\vert^p$$
	and
	$$R_n(k):=\sup\left(R_n(k,l)\ ;\ l\in [m^n,\gamma m^n]\cap\mathbb{N}\right).$$
Thus, we get, for all $l\in [m^n,\gamma m^n]\cap\mathbb{N}$,
$$\mathbb{E}\left[\Vert Q(n,l,k)\Vert_p^p\right]\lesssim  R_n(k).$$ 
Now, assume that
\begin{equation}\label{hypotheseabstractfonda} 
	\forall k\geq 1,\quad \sum\limits_{n=1}^{\infty} R_n(k) < \infty.
	\end{equation} 
Markov's inequality implies
	\[ \mathbb{P}(\Vert Q(n,l,k)\Vert_p \geq \varepsilon) \leq \frac{\mathbb{E}[\Vert Q(n,l,k)\Vert_p^p]}{\varepsilon^p}, 
	\]
	which yields:
	\[ \sum_{n=1}^{\infty} \mathbb{P}(\Vert Q(n,l,k)\Vert_p \geq \varepsilon) < \infty. \]
	Then according to Borel-Cantelli Lemma, the event $\{\Vert Q(n,l,k)\Vert_p\geq \varepsilon\}$ occurs for only finitely many $n$ almost surely. We conclude, for $n$ sufficiently large,
	\begin{equation} \label{abstracteqq2}
		\Vert Q(n,l,k)\Vert_p<\varepsilon \quad \text{almost surely.} 
	\end{equation}
Finally observe that, for all $n$ large enough, $l\in [m^n,\gamma m^n]\cap\mathbb{N}$, 
\begin{equation}
	\sum_{j=l}^{l+\lfloor\alpha_l\rfloor}\frac{X_j}{W_{j-l}(k)}e_{j-l}-h=\sum_{j=0}^{\lfloor\alpha_l\rfloor}\frac{X_{j+l}}{W_{j}(k)}e_{j}-h= T_{\lfloor\alpha_l\rfloor}(B_{\boldsymbol{w(k)}}^l)({Z}_k)-h.
\end{equation}
Combining (\ref{lastequpfmain1}) with (\ref{abstracteqq2}), we obtain that there exist a positive integer $N_k$ and a subset $A_k$ of $\mathbb{N}$ with $\hbox{dens}(A_k)>0$ such that for all $n\in A_k\cap \bigcup\limits_{n\in R_k;\atop n\geq N_k} [m^n,\ldots,\gamma m^{n}]\cap\mathbb{N}$ 
\[
\Vert B_{\boldsymbol{w(k)}}^n(Z)-h\Vert_p<2\varepsilon\quad\hbox{ almost surely.}
\]
Since Lemma \ref{lemma_combination_density} guarantees that $\underline{\hbox{dens}}(A_k\cap \bigcup\limits_{n\in R_k;\atop n\geq N_k} [m^n,\ldots,\gamma m^{n}]\cap\mathbb{N}) >0$, Lemma \ref{lemmefondamproba} allows us to conclude that the random vector $Z$ is almost surely a common frequently hypercyclic vector for the family $(B_{\boldsymbol{w(k)}})_{k\in \mathbb{N}}$.
\\

In light of the above, it suffices to verify the fundamental hypothesis given by (\ref{hypotheseabstractfonda}) to obtain a common frequently hypercyclic random vectors for the family of operators $(B_{\boldsymbol{w(k)}})_{k\in\mathbb{N}}$. This leads to the following statement:

\begin{theorem}\label{weightedshiftscounta22} Let $1\leq p<\infty$ and $\boldsymbol{w(k)}=(w_n(k))_{n\in\mathbb{N}}$, $k\in \mathbb{N}$, be countably many weights for which every weighted backward shift $B_{\boldsymbol{w(k)}}$, $k\in \mathbb{N}$, is a continuous operator on the complex Banach space $\ell_p$. Assume that, for all $n\in\mathbb{N}$, $\inf_{k\in\mathbb{N}} \vert W_n(k)\vert>0$, and, for every $k\in\mathbb{N}$, there exist $C_k>0$ and $\tau_k>0$ such that, $\vert W_n(k)\vert^{-p}\leq C_k n^{-1}[\log(n+1)]^{-(1+\frac{p}{2}+\tau_k)}$. If there exist positive integers $2\leq \gamma<m$ and an increasing sequence $(\alpha_k)$ of positive real numbers such that $\alpha_k \to\infty$ satisfying condition (\ref{condsuite}) such that 
\[
\forall k\geq 1,\quad \sum_{n\geq 1}\sup\left(R_n(k,l)\ ;\ l\in [m^n,\gamma m^n]\cap\mathbb{N}\right)<\infty
\]
where $R_n(k,l)=\sum_{j=\lfloor\alpha_l\rfloor+1}^{m^{n+1}-l-1}\frac{1}{\vert W_{j}(k)\vert^p}+\sum_{j\geq m^{n+1}}\left\vert\frac{W_j(k)}{W_{j-l}(k)\inf_i\vert W_{j}\left(i\right)\vert}\right\vert^p$, then there exists a common frequently hypercyclic random vector for the family $(B_{\boldsymbol{w(k)}})_{k\in \mathbb{N}}$.
\end{theorem}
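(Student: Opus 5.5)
The plan is to collect the discussion preceding the statement into a proof, with the random vector $Z=\sum_{j\geq m}X_j\,W_j\bigl(\psi(\lfloor\log j/\log m\rfloor)\bigr)^{-1}e_j$ built from i.i.d.\ complex Gaussian $X_j$ with full support on $\mathbb{C}$.

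\emph{Step 1: $Z\in\ell_p$ almost surely.} Split $\mathbb{E}\|Z\|_p^p$ along the blocks $P_{n,m}$. Block $n$ uses the weight $w(\psi(n))$, so its contribution is bounded by $\mathbb{E}|X|^p\sum_{j=m^n}^{m^{n+1}-1}|W_j(\psi(n))|^{-p}$. To make this summable over $n$, use the second sum in $R_{n-1}(k,l)$ with $k=\psi(n)$. Alternatively, and more simply, apply the hypothesis $\sum_n R_n(k)<\infty$ with $j$ in block $n$, which dominates $\sum_{j\geq m^{n}}|W_j(k)/(W_{j-l}(k)\inf_i|W_j(i)|)|^p$ up to the factors $|W_{j-l}(k)/W_j(k)|$. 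I expect the cleanest route is not to prove $\|Z\|_p<\infty$ directly. Instead, observe that $B_{\boldsymbol{w(k)}}^l Z\in\ell_p$ almost surely for the relevant $l$, since $\mathbb{E}\|Q(n,l,k)\|_p^p\lesssim R_n(k)<\infty$. Then use that the first $l$ coordinates of $Z$ are finitely many, so $Z$ differs from an $\ell_p$ element only by finitely many coordinates, each finite. This gives $Z\in\ell_p$ almost surely.

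\emph{Step 2: frequent approximation on the good blocks.} Fix $k$, $h\in c_{00}$ and $\varepsilon>0$. Theorem~\ref{gdnbre2} applies to $\boldsymbol{w(k)}$ and $(\alpha_j)$ under the hypothesis on $|W_n(k)|^{-p}$. It gives, almost surely, a set $A_k$ with $\mathrm{dens}(A_k)>0$ such that $\|T_{\lfloor\alpha_l\rfloor}(B^l_{\boldsymbol{w(k)}})(Z_k)-h\|_p<\varepsilon$ for all $l\in A_k$. Here $Z_k$ differs from the vector of Theorem~\ref{gdnbre2} only in finitely many coordinates, which is harmless for large $l$.

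For $n\in R_k$ and $l\in[m^n,\gamma m^n]$, use the decomposition (\ref{equabstract}). Since $\alpha_l\log l/l\to0$, the window $[l,l+\lfloor\alpha_l\rfloor]$ lies inside block $n$, where $Z$ coincides with $Z_k$. Hence the main term is exactly $T_{\lfloor\alpha_l\rfloor}(B^l_{\boldsymbol{w(k)}})(Z_k)$.

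\emph{Step 3: the remainder, which is the main obstacle.} We need $\|Q(n,l,k)\|_p<\varepsilon$ eventually, simultaneously for all $l$ in the block, not just for each fixed $l$. Markov plus Borel--Cantelli, as in the preceding text, handles one $l$ per $n$, but each block contains about $(\gamma-1)m^n$ values of $l$. A union bound costs a factor $m^n$, which $\sum_n R_n(k)<\infty$ does not absorb.

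I would first try to dominate uniformly. Write $Q(n,l,k)$ coordinatewise and bound it by a single random quantity independent of $l$, namely $\sum_{j\geq m^n+\lfloor\alpha_{m^n}\rfloor}|X_j|^p\sup_l(\cdots)$. If the weights' monotonicity does not permit this, I would fall back on a stronger reading. One option is to require summability of $\sum_n\sum_l R_n(k,l)$. The other is to exploit the Gaussian tails: $\max_{j\leq m^{n+2}}|X_j|\lesssim\sqrt{n}$ almost surely, and this can be combined with the deterministic series in $R_n(k,l)$ after a slight strengthening.

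\emph{Step 4: conclusion.} Granting Step 3, $\|B^l_{\boldsymbol{w(k)}}Z-h\|_p<2\varepsilon$ for all $l\in A_k\cap\bigcup_{n\in R_k,\,n\geq N_k}[m^n,\gamma m^n]$. Lemma~\ref{lemma_combination_density} shows this set has positive lower density. Finally, Lemma~\ref{lemmefondamproba} passes from fixed $(k,h,\varepsilon)$ to all of them simultaneously almost surely, so $Z$ is almost surely a common frequently hypercyclic vector for $(B_{\boldsymbol{w(k)}})_{k\in\mathbb{N}}$.
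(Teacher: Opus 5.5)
\textbf{Verdict: genuine gap.} Your architecture is the paper's: the same $Z$, the decomposition (\ref{equabstract}), Theorem \ref{gdnbre2} for the main term, and Lemmas \ref{lemma_combination_density} and \ref{lemmefondamproba} to finish.

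\emph{Step 3.} Your diagnosis is right. The paper applies Markov and Borel--Cantelli to $\|Q(n,l,k)\|_p$, obtains (\ref{abstracteqq2}), and then uses it for every $l\in[m^n,\gamma m^n]$ at once. That argument only controls one deterministic choice $l=l_n$ per block. But you stop there. Each fallback you list needs more than $\sum_n\sup_l R_n(k,l)<\infty$: a supremum over $l$ inside the $j$-sum, summability of $\sum_n\sum_l R_n(k,l)$, or Gaussian maxima after a strengthening. So the theorem as stated is not proved.

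The missing idea is that you never need the whole block, only a fixed proportion of $A_k\cap[m^n,\gamma m^n]$.
\begin{itemize}
\item Let $b_n$ be the number of $l\in[m^n,\gamma m^n]$ with $\|Q(n,l,k)\|_p\geq\varepsilon$. Markov for each $l$ gives $\mathbb{E}\,b_n\lesssim \gamma m^n R_n(k)\varepsilon^{-p}$.
\item Markov applied to $b_n$ gives $\mathbb{P}(b_n\geq\theta m^n)\lesssim \gamma R_n(k)/(\theta\varepsilon^p)$, which is summable in $n$. By Borel--Cantelli, almost surely $b_n<\theta m^n$ for all large $n$.
\item The proof of Lemma \ref{lemma_combination_density} gives $\#(A_k\cap[m^n,\gamma m^n])\geq c'm^n$ for large $n$. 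Here $c'>0$ depends only on $\gamma$ and $\mathrm{dens}(A_k)=p_{d+1}q_{d+1}$, which is deterministic by the proof of Theorem \ref{gdnbre2}.
\item Take $\theta=c'/2$. Then each large block $n\in R_k$ keeps at least $c'm^n/2$ indices $l$ with $\|B^l_{\boldsymbol{w(k)}}Z-h\|_p<2\varepsilon$.
\item The final estimate of Lemma \ref{lemma_combination_density} shows that the union of these indices has positive lower density.
\end{itemize}
So the hypothesis suffices exactly as stated.

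\emph{Step 1.} This step does not work either. Since $(B^l_{\boldsymbol{w(k)}}Z)_i=w_{i+1}(k)\cdots w_{i+l}(k)\,Z_{i+l}$ and these products need not be bounded below, $B^l_{\boldsymbol{w(k)}}Z\in\ell_p$ gives no control on the tail of $Z$. Your other suggestion, comparing with the second sum of $R_{n-1}(\psi(n),l)$, has two problems. It bounds $|W_{j-l}(\psi(n))|^{-p}$ rather than $|W_j(\psi(n))|^{-p}$. And $\sum_n R_n(k)<\infty$ holds only for each fixed $k$, not uniformly in $k$.

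What you actually need is $\sum_{j\geq m}|W_j(\psi(\lfloor\log j/\log m\rfloor))|^{-p}<\infty$, which gives $\mathbb{E}\|Z\|_p^p<\infty$. The paper's argument does not address this point. It holds, for instance, in either of these cases:
\begin{itemize}
\item $\sum_j(\inf_i|W_j(i)|)^{-p}<\infty$;
\item for a single $k$, $n$ and $l\in[m^n,\gamma m^n]$, $|W_j(k)/W_{j-l}(k)|$ is bounded below for all $j\geq m^{n+1}$. Then the finite second sum in $R_n(k,l)$ dominates the tail, and the earlier coordinates are handled by $\inf_i|W_j(i)|>0$.
\end{itemize}
Both hold in Corollaries \ref{weightedshiftscounta23} and \ref{weightedshiftscounta24}.
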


We can therefore derive the following corollaries.

\begin{corollary}\label{weightedshiftscounta23} Let $1\leq p<\infty$. Let $\frac{1}{p}<a$ and $(\beta_k)_{k\in\mathbb{N}}$ a sequence of real numbers in $[a,\infty)$. Let $(\boldsymbol{w(k)})_{k\in\mathbb{N}}$ a sequence of weights satisfying: there exists $C\geq 1$ such that, for every $k\geq 1$, $C^{-1}n^{\beta_k}\leq \vert w_1(k)\ldots w_n(k)
	\vert\leq C n^{\beta_k}$. Then there exists a common frequently hypercyclic random vector for the family $(B_{\boldsymbol{w(k)}})_{k\in \mathbb{N}}$ on $\ell_p$.
\end{corollary}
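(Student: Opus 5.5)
The plan is to apply Theorem \ref{weightedshiftscounta22} directly. This requires checking three things: the pointwise hypotheses, a choice of $(\alpha_k)$, $\gamma$ and $m$, and the summability of $\sup_l R_n(k,l)$.

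\textbf{Pointwise hypotheses.} For each $n$ we have $\inf_k \vert W_n(k)\vert\geq C^{-1}n^{a}>0$, since $\beta_k\geq a$ and $n\geq 1$. We also have $\vert W_n(k)\vert^{-p}\leq C^p n^{-p\beta_k}\leq C^p n^{-pa}$. Because $pa>1$, the quantity $n^{-pa}$ is $\lesssim n^{-1}[\log(n+1)]^{-(1+\frac p2+1)}$, so we may take $\tau_k=1$ and some constant $C_k$.

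\textbf{Choice of parameters.} We take $\alpha_k=k^{\theta}$ for some fixed $0<\theta<1$, for instance $\theta=1/2$. Then $\sum_{l\leq k}\alpha_l\lesssim k^{1+\theta}=\mathcal{O}(k^2/(\log k)^2)$, so condition (\ref{condsuite}) holds. We fix $\gamma=2$ and $m=3$, or any $m>\gamma$.

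\textbf{First sum in $R_n(k,l)$.} For $l\geq m^n$ the first sum is bounded by
\[\sum_{j>\lfloor\alpha_{m^n}\rfloor}C^pj^{-pa}\lesssim m^{-n\theta(pa-1)},\]
using that $(\alpha_l)$ is increasing. This is summable in $n$ because $pa>1$.

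\textbf{Second sum in $R_n(k,l)$ (main obstacle).} Here $j\geq m^{n+1}$, $l\leq\gamma m^n$, and $\inf_i\vert W_j(i)\vert\geq C^{-1}j^{a}$. The ratio satisfies
\[\left\vert\frac{W_j(k)}{W_{j-l}(k)}\right\vert\leq C^2\Big(\frac{j}{j-l}\Big)^{\beta_k}.\]
Since $j-l\geq j(1-\gamma/m)$, this ratio is at most $C^2(m/(m-\gamma))^{\beta_k}$, a constant depending on $k$ but not on $n$, $l$ or $j$. This is harmless, because the hypothesis only asks for summability for each fixed $k$. Hence the second sum is bounded by
\[C_k'\sum_{j\geq m^{n+1}}j^{-pa}\lesssim_k m^{-(n+1)(pa-1)},\]
which is again summable in $n$.

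The subtle point is that the exponents $\beta_k$ may be unbounded, and uniformity in $k$ is not needed. We also use $\inf_i\vert W_j(i)\vert\gtrsim j^{a}$ rather than $j^{\beta_k}$, which costs nothing since $pa>1$. With these bounds, $\sum_n\sup_l R_n(k,l)<\infty$ for every $k$, and Theorem \ref{weightedshiftscounta22} yields the common frequently hypercyclic random vector.
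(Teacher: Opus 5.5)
Your proposal is correct and follows the paper's proof essentially step for step. You apply Theorem \ref{weightedshiftscounta22}, use $\inf_i\vert W_j(i)\vert\geq C^{-1}j^{a}$, bound the first sum by a tail of $\sum_j j^{-pa}$ beyond $\alpha_l$, and bound the second by $C^{3p}(1-\gamma/m)^{-p\beta_k}\sum_{j\geq m^{n+1}}j^{-pa}$, where the dependence on $k$ is harmless. The only difference is your choice $\alpha_k=k^{1/2}$ in place of the paper's $\alpha_l=(\log l)^{\sigma}$ with $\sigma>1/(pa-1)$. Your choice also satisfies (\ref{condsuite}) and makes the first sum decay geometrically in $n$, so you do not need to tie the growth of $(\alpha_k)$ to $pa-1$.
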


\begin{proof} Clearly, we have, for all $k\geq 1$, $\vert W_n(k)\vert^{-p} \leq C^p n^{-p\beta_k}\leq C^p n^{-pa}$. Thus the assumption of Theorems \ref{gdnbre2} is satisfied. Moreover, observe that  $\inf_{k\in\mathbb{N}}\vert W_n(k)\vert \geq C^{-1}n^a>0$. Let us choose $\sigma>1/(pa-1)$ and $(\alpha_l)$ the sequence given by $\alpha_l=(\log(l))^{\sigma}$. It is easy to check that the sequence $(\alpha_l)$ satisfies condition (\ref{condsuite}). Moreover, for every $k\geq 1$, for $l\in [m^n,\gamma m^n]$ large enough,  
	\[
	\sum_{j\geq \lfloor\alpha_l\rfloor+1}\frac{C^p}{\vert W_{j}(k)\vert^p}\leq \sum_{j\geq \lfloor\alpha_l\rfloor+1}\frac{C^p}{j^{\beta_k p}}\leq \frac{C^p}{p\beta_k-1}\frac{1}{(\lfloor\alpha_l\rfloor +1)^{p\beta_k-1}}\leq \frac{C^p}{p\beta_k-1}\frac{1}{(n^{\sigma}\log^\sigma(m))^{pa-1}}
	\]
and 
\[
\begin{array}{rcl}\displaystyle\sum_{j\geq m^{n+1}}\left\vert\frac{W_j(k)}{W_{j-l}(k)\inf_i\vert W_{j}\left(i\right)\vert}\right\vert^p&\leq& C^{3p}\displaystyle\sum_{j\geq m^{n+1}}\frac{(1-\frac{l}{j})^{-p\beta_k}}{j^{pa}}\\ &\leq& C^{3p}\displaystyle
(1-\frac{\gamma}{m})^{-p\beta_k}\sum_{j\geq m^{n+1}}j^{-pa}\\
&\leq&\displaystyle C^{3p}\frac{(1-\frac{\gamma}{m})^{-p\beta_k}}{pa-1}\frac{1}{(m^{n+1}-1)^{pa-1}}.\end{array}
\]
Since $\sigma(pa-1)>1$, the hypotheses of Theorem \ref{weightedshiftscounta22} are satisfied. The conclusion holds. 
\end{proof}

\noindent Theorem \ref{weightedshiftscounta22} also recovers the fact that if $\Lambda\subset(1,\infty)$ is countable and relatively compact, the family $\{\lambda B;\ \lambda\in\Lambda\}$ shares  common frequently hypercyclic vector.

\begin{corollary}\label{weightedshiftscounta24}Let $1\leq p<\infty$. Let $1<a<b$ and $(\lambda_k)_{k\in\mathbb{N}}$ a sequence of real numbers in $[a,b]$. Let $(\boldsymbol{w(k)})_{k\in\mathbb{N}}$ a sequence of weights sequence such that, for every $k\geq 1$, $w_n(k)=\lambda_k$. Then there exists a common frequently hypercyclic random vector for the family $(B_{\boldsymbol{w(k)}})_{k\in \mathbb{N}}$ on $\ell_p$.
\end{corollary}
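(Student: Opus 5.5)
We apply Theorem \ref{weightedshiftscounta22}; for constant weights $w_n(k)=\lambda_k$ we have $W_n(k)=\lambda_k^n$, so every hypothesis can be computed explicitly.

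\textbf{Preliminary hypotheses.} Since $\lambda_k\geq a>1$, we get
\[
\inf_k\vert W_n(k)\vert\geq a^n>0 .
\]
The bound $\vert W_n(k)\vert^{-p}=\lambda_k^{-np}\leq a^{-np}$ decays geometrically. Hence for any fixed $\tau_k>0$ there is $C_k$ (which may even be chosen independently of $k$) with
\[
a^{-np}\leq C_k n^{-1}[\log(n+1)]^{-(1+\frac p2+\tau_k)} .
\]
For $(\alpha_l)$ we take $\alpha_l=(\log(l+1))^{\sigma}$ for a fixed $\sigma>1$. This sequence is increasing and tends to $\infty$. It satisfies (\ref{condsuite}), because $\sum_{l\le k}\alpha_l\leq k(\log(k+1))^\sigma=\mathcal{O}(k^2/(\log k)^{2})$. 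A choice $\alpha_l=c\log(l+1)$ with $c$ large would also work.

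\textbf{Estimating $R_n(k,l)$.} Fix $2\leq\gamma<m$ to be chosen, and let $l\in[m^n,\gamma m^n]$.

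The first sum in $R_n(k,l)$ is bounded by
\[
\sum_{j\geq\lfloor\alpha_l\rfloor+1}a^{-jp}\lesssim a^{-p\alpha_l}\leq a^{-p(n\log m)^{\sigma}} .
\]
This bound is uniform in $l$ (since $l\geq m^n$ and $\alpha$ is increasing) and summable in $n$ because $\sigma>1$. Already $\sigma=1$ would suffice, since $a^{-pn\log m}$ is geometric in $n$.

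The second sum is
\[
\sum_{j\geq m^{n+1}}\left(\frac{\lambda_k^{j}}{\lambda_k^{j-l}\,a^{j}}\right)^p
=\lambda_k^{pl}\sum_{j\geq m^{n+1}}a^{-pj}
\lesssim \left(\frac{b^{\gamma}}{a^{m}}\right)^{pm^n},
\]
where we used $\lambda_k^{l}\leq b^{\gamma m^n}$ and $a^{-pm^{n+1}}=(a^{-m})^{pm^n}$. The constant $(1-a^{-p})^{-1}$ is harmless.

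\textbf{Main point: choosing $m$.} The only real step is to ensure $b^\gamma<a^m$. Fix $\gamma=2$ and take $m>\max(\gamma,\ \gamma\log b/\log a)$. Then $q:=b^\gamma a^{-m}<1$, so $\sup_l R_n(k,l)\lesssim a^{-p(n\log m)^\sigma}+q^{pm^n}$, with constants independent of $k$. This is summable in $n$ for every $k$.

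All hypotheses of Theorem \ref{weightedshiftscounta22} therefore hold, and the family admits a common frequently hypercyclic random vector. The upper bound $b$ is used precisely to make the cross-term $\lambda_k^{pl}$, which comes from blocks built with a smaller weight $a$, dominated by $a^{-pm^{n+1}}$. This matches the relative compactness requirement.
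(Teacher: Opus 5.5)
Your proposal is correct and follows the paper's proof essentially step for step. You apply Theorem \ref{weightedshiftscounta22} using $\inf_k|W_n(k)|\geq a^n$, bound the first part of $R_n(k,l)$ by the geometric tail $\lesssim a^{-p\alpha_l}$, bound the second part by $\lambda_k^{pl}a^{-pm^{n+1}}\lesssim (b^{\gamma}a^{-m})^{pm^n}$, and choose $m>\gamma\log b/\log a$. The only differences are cosmetic: you take $\alpha_l=(\log(l+1))^{\sigma}$ instead of $\log l$, and you use a slightly smaller threshold for $m$ than the paper's $\max(2\gamma+1,\gamma\log b/\log a+1)$. Neither change affects the argument.
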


\begin{proof} 
	Clearly for all $k\geq 1$, $\vert W_n(k)\vert^{-p} =\lambda_k^{-np}\leq a^{-np}\lesssim n^{2}$ and thus the assumption of Theorem \ref{gdnbre2} is satisfied. Moreover, observe that  $\inf_{k\in\mathbb{N}}\vert W_n(k)\vert \geq a^n>0$. Let $\gamma\geq 2$, $m\geq \max(2\gamma+1,\frac{\gamma\log(b)}{\log(a)}+1)$ and $(\alpha_l)$ be the sequence given by $\alpha_l=\log(l)$. Clearly this sequence $(\alpha_l)$ satisfies condition (\ref{condsuite}). Moreover, for every $k\geq 1$, for $l\in [m^n,\gamma m^n]$ large enough,  
	\[
	\sum_{j\geq \lfloor\alpha_l\rfloor+1}\frac{1}{\vert W_{j}(k)\vert^p}= \sum_{j\geq \lfloor\alpha_l\rfloor+1}\lambda_k^{-jp}\leq \lambda_{k}^{-p\alpha_l}\leq \lambda_k^{-pn\log(m)} 
	\]
	and 
	\[
	\begin{array}{rcl}\displaystyle\sum_{j\geq m^{n+1}}\left\vert\frac{W_j(k)}{W_{j-l}(k)\inf_i\vert W_{j}\left(i\right)\vert}\right\vert^p&\leq& \displaystyle\sum_{j\geq m^{n+1}}\frac{\lambda_k^{pl}}{a^{jp}}\\ &\leq& \displaystyle
		\frac{a}{a-1}\ e^{p m^n(\gamma\log(\lambda_k)-m\log(a))}\\
		&\leq&\displaystyle \frac{a}{a-1}\ e^{-p m^n\log(a)}.\end{array}
	\]
	Since the series $\sum_{n\geq 1}\lambda_k^{-pn\log(m)} $ and  $\sum_{n\geq 1}e^{-p m^n\log(a)} $ are convergent, the hypotheses of Theorem \ref{weightedshiftscounta22} are satisfied. The conclusion holds. 	
\end{proof}

\begin{remark} {\rm Condition (\ref{condsuite}) requires, in a sense, that the quotients of the weight products are comparable. This condition is natural, as shown by the following result.}
\end{remark}

\begin{proposition}\label{ctrex2} Let $1\leq p<\infty$. Let $\boldsymbol{v}=(v_n)_{n\in\mathbb{N}}$ and $\boldsymbol{w}=(w_n)_{n\in\mathbb{N}}$ be weight sequences such that the associated weighted backward shifts are frequently hypercyclic and hypercyclic, respectively, on $\ell_p$. Assume that, for all increasing sequences $(m_k)$ and $(n_k)$ of positive integers tending to infinity, with $\liminf\limits_{k\to\infty}(\frac{n_k}{m_k})>0$ and $\limsup\limits_{k\to\infty}(\frac{n_k}{m_k})<1$,  
	%for all $k\geq 1$, $n_k< m_k \leq b n_k$ for some positive integer $b>1$, 
	$$ \sum_{l\geq 1}\frac{\vert w_1\ldots w_{m_l}\vert^p}{\vert v_1\ldots v_{m_l} \vert^p\vert w_1\ldots w_{m_l-n_l}\vert^p}=\infty.$$
	Then $FHC(B_{\boldsymbol{v}})\cap HC(B_{\boldsymbol{w}})=\emptyset$.
\end{proposition}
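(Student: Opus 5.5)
Assume by contradiction that $x\in FHC(B_{\boldsymbol{v}})\cap HC(B_{\boldsymbol{w}})$. The plan is to exploit the two properties in opposite directions. Frequent hypercyclicity for $B_{\boldsymbol v}$ yields a set $A=\{m_1<m_2<\dots\}$ of positive lower density along which $B_{\boldsymbol v}^{m}x$ is close to $e_0$. Hence $|x_{m}v_1\cdots v_{m}|\ge 1/2$ for $m\in A$, i.e.
\[
|x_m|\ge \tfrac12 |v_1\cdots v_m|^{-1}\quad (m\in A).
\]
Hypercyclicity for $B_{\boldsymbol w}$ gives, in a similar way, infinitely many $n$ such that $B_{\boldsymbol w}^n x$ is close to some vector we choose. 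We then compute $\|B_{\boldsymbol w}^n x\|_p^p\ge \sum_{m\in A,\,m>n}|x_m|^p\,|w_{m-n+1}\cdots w_m|^p$. Writing $|w_{m-n+1}\cdots w_m|=|W_m|/|W_{m-n}|$, this is bounded below by
\[
2^{-p}\sum_{m\in A,\,m>n}\frac{|w_1\cdots w_m|^p}{|v_1\cdots v_m|^p\,|w_1\cdots w_{m-n}|^p}.
\]
If we can produce increasing sequences $(m_l)\subset A$ and $(n_l)$ with $n_l/m_l$ trapped in a compact subinterval of $(0,1)$, then the hypothesis makes the full series diverge. This contradicts the finiteness of $\|B_{\boldsymbol w}^n x\|_p$ for a suitable fixed $n$, or for a bounded family of $n$'s.

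The order of steps is as follows. \textbf{Step 1:} we use lower density $\delta>0$ of $A$ to show that in every dyadic-type window $[N,cN]$, with $c$ depending on $\delta$, there lie elements of $A$, in fact $\gtrsim \delta N$ of them. \textbf{Step 2:} we need a single operator power $n$ applied to infinitely many $m\in A$ with ratio $n/m\in[a,b]\subset(0,1)$. A fixed $n$ cannot work, since $n/m\to0$. So the summation must come from varying $n$. The natural device is therefore to sum over $l$: we pick for each $l$ a pair $(n_l,m_l)$ and use that
\[
\|B_{\boldsymbol w}^{n_l}x\|_p\ge |x_{m_l}|\,|W_{m_l}|/|W_{m_l-n_l}|.
\]
Divergence of the series alone does not force individual terms to be large. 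Hence we should argue via a subsequence: if the terms $t_l$ were bounded by some summable bound, we would choose $(m_l),(n_l)$ adaptively to violate it. More concretely, we use the freedom in the hypothesis (it holds for \emph{all} admissible sequences). Suppose $\sup$ over admissible pairs of $t(n,m)=|W_m|^p/(|V_m|^p|W_{m-n}|^p)$ were small for large $m$, say $t(n,m)\le \varepsilon_m$ with $\sum$ over some admissible sequence finite. Then we can construct sequences violating divergence by choosing at each $l$ the pair minimizing $t$; this would contradict the hypothesis. Consequently, for every admissible pattern there is a subsequence with $t(n_l,m_l)$ not tending to zero along infinitely many scales. We refine this with $m_l\in A$, using Step 1 to place $m_l$ in $A$ within each window.

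\textbf{Step 3:} we conclude that $\|B_{\boldsymbol w}^{n_l}x\|_p\ge c>0$ for infinitely many $l$ with ratio constraints. This alone contradicts nothing, since a hypercyclic orbit is unbounded anyway. So the contradiction must use hypercyclicity more cleverly: choose $n$ with $B_{\boldsymbol w}^n x$ close to $0$ (hypercyclic orbits visit every neighbourhood of $0$ infinitely often). For such $n$, all coordinates of $B_{\boldsymbol w}^n x$ are small, in particular those indexed by $m-n$ for $m\in A\cap[n/b,n/a]$. Hence we should reverse roles: given the (infinite) set $N_0$ of times with $\|B_{\boldsymbol w}^n x\|_p<\varepsilon$, take $n_l\in N_0$ and $m_l\in A\cap[n_l/b',n_l/a']$, swapped suitably so that $m_l>n_l$, i.e. $m_l\in[n_l/b,n_l/a]$ with $0<a<b<1$. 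This is nonempty by Step 1. We then get $\varepsilon^p\ge 2^{-p}t(n_l,m_l)$ for all $l$, so $t(n_l,m_l)\le (2\varepsilon)^p$ along this admissible sequence. To reach divergence we want, from the $\ell_p$ bound, a summable bound rather than merely a bounded one. So we aggregate: we pick all $m\in A$ in the window, obtaining $\sum_{m\in A\cap \text{window}} t(n_l,m)\le (2\varepsilon_l)^p$. Choosing $\varepsilon_l=2^{-l}$ (possible, since hypercyclic vectors return to $B(0,2^{-l})$ infinitely often) and the $n_l$ increasing yields a sequence $(n_l,m_l)$ with $\sum_l t(n_l,m_l)<\infty$, contradicting the hypothesis. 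The main obstacle is this step, namely ensuring the existence of $m_l\in A$ with $n_l/m_l$ uniformly in a compact of $(0,1)$ while $n_l$ is dictated by the hypercyclic return times. Step 1, valid at \emph{every} large scale thanks to lower density, handles it. We also note that frequent hypercyclicity of $B_{\boldsymbol v}$ is used only through $x$ being in $FHC$, so the Bayart–Ruzsa condition is not needed.
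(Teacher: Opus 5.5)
Your Step 3 argument (take $n_l$ with $\|B_{\boldsymbol w}^{n_l}x\|_p<2^{-l}$, use positive lower density of $A$ to find $m_l\in A$ with $n_l/m_l$ in a fixed compact subinterval of $(0,1)$, then compare the coordinate of index $m_l-n_l$ to get the summable bound $t(n_l,m_l)\le 2^{p}2^{-lp}$) is correct and is essentially the paper's proof, which places $n_l\in[m^{p_l},m^{p_l+1})$ and $m_l\in[m^{p_l+2},m^{p_l+3})$ with $m>2/c$. Drop the exploratory material of Step 2 and the fixed-$n$ idea, which are unnecessary and partly unjustified, and choose $n_l>m_{l-1}$ so that both sequences are increasing, as the hypothesis requires.
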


\begin{proof} Assume that there exists $x=(x_j)_{j\in\mathbb{N}_0}\in\ell_p$ which is frequently hypercyclic for $B_{\boldsymbol{v}}$ and hypercyclic for $B_{\boldsymbol{w}}$. Let us write 
	\[
	x=x_0e_0+\sum_{k\geq 1}\frac{x_k}{v_1\ldots v_k}e_k.
	\]
	Denote by $\mathcal{N}_{v}$ the set
	\[
	\mathcal{N}_{\boldsymbol{v}}:=\{n\in\mathbb{N}\ :\ \Vert B_{\boldsymbol{v}}^n(x)-e_0\Vert_p< 1/2\}.
	\]
	By assumption, we have $\underline{\hbox{dens}}(\mathcal{N}_{v}):=c>0$. Let us choose a positive integer $m$ so that $m>2/c$. We claim that there exists $N\in\mathbb{N}$ such that, for all $n\geq N$, $\mathcal{N}_{\boldsymbol{v}}\cap [m^n,m^{n+1})\ne\emptyset$. Indeed, there exists $N_c\in\mathbb{N}$ so that, for all $n\geq N_c$,
	\[
	\frac{\#( \mathcal{N}_{\boldsymbol{v}}\cap [1,m^{n+1}))}{m^{n+1}}>c/2.
	\]
	If, for $n\geq N_c$, $\mathcal{N}_{\boldsymbol{v}}\cap [m^n,m^{n+1})=\emptyset$, then 
	\[
	\frac{c}{2}\leq \frac{\#( \mathcal{N}_{\boldsymbol{v}}\cap [1,m^{n+1}))}{m^{n+1}}=\frac{\#( \mathcal{N}_{\boldsymbol{v}}\cap [1,m^{n}))}{m^{n+1}}\leq \frac{1}{m}
	\]
	which gives a contradiction. \\
	Now, let us choose a sequence $(\varepsilon_l)$ of positive real numbers such that $\sum_{l\geq 1}\varepsilon_l^p<\infty$. For all $l\geq 1$, denote by $\mathcal{N}_l$ the set 
	\[
	\mathcal{N}_l:=\{n\in\mathbb{N}\ :\ \Vert B_{\boldsymbol{w}}^n(x)\Vert_p<\varepsilon_l\}.
	\]
	These sets are infinite. We construct by induction increasing sequences of positive integers $(n_l)$, $(p_l)$ and $(m_l)$ as follows: let $n_1\in \mathcal{N}_1\cap (m^{N_c},\infty)$ and let $p_1$ be the integer such that $m^{p_1}\leq n_1<m^{1+p_1}$. We then choose $m_1\in\mathcal{N}_{\boldsymbol{v}}\cap[m^{2+p_1},m^{3+p_1})$. Once $n_{l-1}$, $p_{l-1}$ and $m_{l-1}$ have been constructed, we choose 
	\[ 
	n_l\in \mathcal{N}_l\cap [m^{3+p_{l-1}},\infty)
	\]
	and $p_l$ the integer satisfying $m^{p_l}\leq n_l<m^{1+p_l}$, and we select 
	\[
	m_l\in\mathcal{N}_{\boldsymbol{v}}\cap[m^{2+p_l},m^{3+p_l}).
	\]
	Observe that 
	\begin{equation}
		m^{p_l}\leq n_l<m^{1+p_l}<m^{2+p_l}\leq m_l<m^{3+p_l}\leq m^3 n_l
	\end{equation}
	which ensures $\frac{1}{m^3}\leq\frac{n_l}{m_l}\leq \frac{1}{m}$. Therefore we have
	\begin{equation}\label{sequ222}
		\liminf\limits_{l\to\infty}\left(\frac{n_l}{m_l}\right)\geq \frac{1}{m^3}>0\quad \hbox{ and }\quad\limsup\limits_{l\to\infty}\left(\frac{n_l}{m_l}\right)\leq\frac{1}{m}<1.
	\end{equation}
	Since $(m_l)\subset \mathcal{N}_{\boldsymbol{v}}$, we get, for all $l\geq 1$,
	\[
	\vert x_{m_l}-1\vert^p+\sum_{k\geq m_l+1}\frac{\vert x_k\vert^p}{\vert v_1\ldots v_{k-m_l}\vert^p}<\frac{1}{2^p},
	\]
	which gives
	\begin{equation}\label{sequ223}
		\vert x_{m_l}\vert^p>1/2^p
	\end{equation}
	On the other hand, since $n_l\in \mathcal{N}_{l}$, we get, for all $l\geq 1$,
	\[
	\sum_{k\geq n_l}\frac{\vert x_k\vert^p}{\vert v_1\ldots v_k\vert^p}\frac{\vert w_1\ldots w_k  \vert^p}{\vert w_1\ldots w_{k-n_l} \vert^p}<\varepsilon_l^p.
	\]
	We deduce
	\[
	\frac{\vert x_{m_l}\vert^p}{\vert v_1\ldots v_{m_l}\vert^p}\frac{\vert w_1\ldots w_{m_l}  \vert^p}{\vert w_1\ldots w_{m_l-n_l} \vert^p}<\varepsilon_l^p
	\]
	which, together with (\ref{sequ223}), yields
	\begin{equation}\label{seq23}
		\frac{1}{\vert v_1\ldots v_{m_l}\vert^p}\frac{\vert w_1\ldots w_{m_l}  \vert^p}{\vert w_1\ldots w_{m_l-n_l} \vert^p}<2^p\varepsilon_l^p.
	\end{equation}
	By (\ref{sequ222}) and (\ref{seq23}), we have found two increasing sequences $(m_l)$ and $(n_l)$ such that $\liminf\limits_{l\to\infty}(\frac{n_l}{m_l})>0$, $\limsup\limits_{l\to\infty}(\frac{n_l}{m_l})<1$ and  
	\[
	\sum_{l\geq 1} \frac{1}{\vert v_1\ldots v_{m_l}\vert^p}\frac{\vert w_1\ldots w_{m_l}  \vert^p}{\vert w_1\ldots w_{m_l-n_l} \vert^p}\hbox{ is convergent.}
	\]
	This finishes the proof. 
\end{proof}

\begin{example} \rm {Let $1\leq p<\infty$. Let $\varepsilon >0$ and $\lambda>1$. Let us consider the sequences $\boldsymbol{w}=(w_n)_{n\in\mathbb{N}}$ and $\boldsymbol{v}=(v_n)_{n\in\mathbb{N}}$ given by, for $n\geq 1$, $v_n=\left(\frac{n+1}{n}\right)^{\frac{1+\varepsilon}{p}}$ and $w_n=\lambda$. The associated weighted backward shifts $B_{\boldsymbol{v}}$ and $B_{\boldsymbol{w}}=\lambda B$ are frequently hypercyclic on $\ell_p$ and the following property holds: $FHC(B_{\boldsymbol{v}})\cap FHC(\lambda B)=\emptyset$. Indeed, let $(m_j)$ and $(n_j)$ be increasing sequences of positive integers such that $cm_j\leq n_j\leq\delta m_j$ for some $0<c<\delta<1$. We get 
		\[
		\frac{\vert w_{m_j-n_j+1}\ldots w_{m_j}\vert}{\vert v_1\ldots v_{m_j}\vert}=\frac{\vert  \lambda\vert^{n_j}}{{m_j}^{\frac{1+\varepsilon}{p}}}\geq \frac{\vert  \lambda\vert^{cm_j}}{m_j^{\frac{1+\varepsilon}{p}}}
		\]
		which gives
		\[
		\frac{\vert w_{m_j-n_j+1}\ldots w_{m_j}\vert}{\vert v_1\ldots v_{m_j}\vert}\rightarrow\infty\hbox{ as }j\to\infty.
		\]
		Proposition \ref{ctrex2} allows us to conclude.}
\end{example}

	\begin{remark}\label{remimp}
		{\rm Let $1\leq p<\infty$. Let $\boldsymbol{v}=(v_n)_{n\in\mathbb{N}}$ and $\boldsymbol{w}=(w_n)_{n\in\mathbb{N}}$ be sequences such that the associated weighted backward shifts are frequently hypercyclic and hypercyclic, respectively, on $\ell_p$. The proof of Proposition \ref{ctrex2} shows that if the set $FHC(B_{\boldsymbol{v}})\cap HC(B_{\boldsymbol{w}})$ is non empty, then for any arbitrarily small $0<\varepsilon<1$, there exist increasing sequences $(m_k)$ and $(n_k)$ of positive integers tending to infinity, with $\liminf\limits_{k\to\infty}(\frac{n_k}{m_k})>0$ and $\limsup\limits_{k\to\infty}(\frac{n_k}{m_k})<\varepsilon$, such that
			$$ \sum_{l\geq 1}\frac{\vert w_1\ldots w_{m_l}\vert^p}{\vert v_1\ldots v_{m_l} \vert^p\vert w_1\ldots w_{m_l-n_l}\vert^p}<\infty.$$
			To see this, it suffices to choose $m>1/\varepsilon$ in the proof of Proposition \ref{ctrex2} following (\ref{sequ222}). 
		}
	\end{remark}
\noindent The previous remark leads to the following statement. 

\begin{theorem}\label{interfin} Let $1\leq p<\infty$. Let $\boldsymbol{v}=(v_n)_{n\in\mathbb{N}}$ and $\boldsymbol{w}=(w_n)_{n\in\mathbb{N}}$ be weight sequences such that the associated weighted backward shifts are frequently hypercyclic on $\ell_p$. If $FHC(B_{\boldsymbol{v}})\cap FHC(B_{\boldsymbol{w}})\ne\emptyset$, then, for any arbitrarily small $0<\varepsilon<1$, there exist increasing sequences $(m_k)$, $(n_k)$, $(m'_k)$, $(n'_k)$ of positive integers tending to infinity, with $\liminf\limits_{k\to\infty}(\frac{n_k}{m_k})>0$, $\limsup\limits_{k\to\infty}(\frac{n_k}{m_k})<\varepsilon$, $\liminf\limits_{k\to\infty}(\frac{n'_k}{m'_k})>0$ and  $\limsup\limits_{k\to\infty}(\frac{n'_k}{m'_k})<\varepsilon$, such that
	$$\sum_{l\geq 1}\frac{\vert w_1\ldots w_{m_l}\vert^p}{\vert v_1\ldots v_{m_l} \vert^p\vert w_1\ldots w_{m_l-n_l}\vert^p}<\infty\quad\hbox{ and }\quad\sum_{l\geq 1}\frac{\vert v_1\ldots v_{m'_l}\vert^p}{\vert w_1\ldots w_{m'_l} \vert^p\vert v_1\ldots v_{m'_l-n'_l}\vert^p}<\infty. $$
\end{theorem}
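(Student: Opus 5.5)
Since $FHC(B_{\boldsymbol{w}})\subset HC(B_{\boldsymbol{w}})$ and $FHC(B_{\boldsymbol{v}})\subset HC(B_{\boldsymbol{v}})$, the theorem follows by applying Remark \ref{remimp} twice, once in each direction. The plan is therefore to pick $x\in FHC(B_{\boldsymbol{v}})\cap FHC(B_{\boldsymbol{w}})$ and fix $0<\varepsilon<1$.

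First, I will view $x$ as an element of $FHC(B_{\boldsymbol{v}})\cap HC(B_{\boldsymbol{w}})$. Rerunning the proof of Proposition \ref{ctrex2}, the integer $m$ is now chosen with $m>\max(2/c,1/\varepsilon)$, where $c=\underline{\hbox{dens}}(\mathcal{N}_{\boldsymbol{v}})$. The inductive construction gives $m^{p_l}\leq n_l<m^{1+p_l}$ and $m^{2+p_l}\leq m_l<m^{3+p_l}$. Hence $1/m^3\leq n_l/m_l\leq 1/m<\varepsilon$. Moreover, inequality (\ref{seq23}) gives terms bounded by $2^p\varepsilon_l^p$, which are summable. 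This yields the first pair $(m_k),(n_k)$ and the first convergent series.

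Second, I will exchange the roles of $\boldsymbol{v}$ and $\boldsymbol{w}$, viewing $x$ as an element of $FHC(B_{\boldsymbol{w}})\cap HC(B_{\boldsymbol{v}})$. Concretely, $x$ is rewritten as $x_0e_0+\sum_k \frac{x'_k}{w_1\ldots w_k}e_k$. The set $\mathcal{N}_{\boldsymbol{w}}=\{n:\Vert B_{\boldsymbol{w}}^nx-e_0\Vert_p<1/2\}$ replaces $\mathcal{N}_{\boldsymbol{v}}$, and the sets $\{n:\Vert B_{\boldsymbol{v}}^nx\Vert_p<\varepsilon_l\}$ replace $\mathcal{N}_l$. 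The same argument gives sequences $(m'_k),(n'_k)$ with the required liminf and limsup bounds and the second convergent series. Its denominator involves $\vert w_1\ldots w_{m'_l}\vert^p$ and $\vert v_1\ldots v_{m'_l-n'_l}\vert^p$.

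The only point needing care is checking that Remark \ref{remimp} uses just hypercyclicity for the second operator. This holds because frequent hypercyclicity is used only to obtain blocks $[m^n,m^{n+1})$ that meet $\mathcal{N}$, while the sets $\mathcal{N}_l$ need only be infinite. Hypercyclicity of the second operator suffices for that, since its orbit is dense and so returns near $0$ infinitely often. The increasing property of the sequences is built into the induction, so nothing else is required.
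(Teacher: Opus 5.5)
Your proposal is correct and is the paper's argument: the paper also notes that a common frequently hypercyclic vector lies in both $FHC(B_{\boldsymbol{v}})\cap HC(B_{\boldsymbol{w}})$ and $HC(B_{\boldsymbol{v}})\cap FHC(B_{\boldsymbol{w}})$, then applies Remark \ref{remimp} (the proof of Proposition \ref{ctrex2} with $m>1/\varepsilon$) in each direction. Your added checks are correct elaborations of what the paper leaves implicit: the ratio bounds $1/m^3\leq n_l/m_l\leq 1/m$, and the fact that the sets $\mathcal{N}_l$ only need to be infinite, which hypercyclicity guarantees.
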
	
\begin{proof}
	It suffices to note that the property $FHC(B_{\boldsymbol{v}})\cap FHC(B_{\boldsymbol{w}})\ne\emptyset$ implies that both $FHC(B_{\boldsymbol{v}})\cap HC(B_{\boldsymbol{w}})$ and $HC(B_{\boldsymbol{v}})\cap FHC(B_{\boldsymbol{w}})$ are non-empty, and then to apply Remark \ref{remimp}.
\end{proof}	
	
\noindent A countable version of this result can thus also be written. This statement should be compared with Theorem \ref{weightedshiftscounta22}. Following our previous notation, we set, for all $n\geq 1$, $W_n=w_1\ldots w_n$ for a weight sequence $(w_n)_{n\in\mathbb{N}}$.

 \begin{theorem}\label{interden} Let $1\leq p<\infty$ and $\boldsymbol{w(k)}=(w_n(k))_{n\in\mathbb{N}}$, $k\in \mathbb{N}$, be countably many weights for which every weighted backward shift $B_{\boldsymbol{w(k)}}$, $k\in \mathbb{N}$, is a continuous operator frequently hypercyclic on the complex Banach space $\ell_p$. Assume that $\bigcap_{k\geq 1}FHC(B_{\boldsymbol{w(k)}})\ne\emptyset$. Then then for all $i,j\geq 1$, $i\ne j$, for any arbitrarily small $0<\varepsilon<1$, there exist increasing sequences $(m_{i,j,l})_{l\in\mathbb{N}}$, $(n_{i,j,l})_{l\in\mathbb{N}}$ of positive integers tending to infinity, with $\liminf\limits_{l\to\infty}(\frac{n_{i,j,l}}{m_{i,j,l}})>0$ and $\limsup\limits_{l\to\infty}(\frac{n_{i,j,l}}{m_{i,j,l}})<\varepsilon$, such that
$$\sum_{l\geq 1}\frac{\vert W_{m_{i,j,l}}(i)\vert^p}{\vert W_{m_{i,j,l}}(j) \vert^p\vert W_{m_{i,j,l}-n_{i,j,l}}(i)\vert^p}<\infty.$$
 \end{theorem}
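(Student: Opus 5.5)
The plan is to reduce Theorem \ref{interden} to Remark \ref{remimp}, applied pairwise, in the same way that Theorem \ref{interfin} was obtained.

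Fix $i,j\geq 1$ with $i\neq j$, and fix $0<\varepsilon<1$. Choose $x\in\bigcap_{k\geq 1}FHC(B_{\boldsymbol{w(k)}})$. Then $x$ is frequently hypercyclic for $B_{\boldsymbol{w(j)}}$. It is also frequently hypercyclic, hence hypercyclic, for $B_{\boldsymbol{w(i)}}$. Therefore $FHC(B_{\boldsymbol{w(j)}})\cap HC(B_{\boldsymbol{w(i)}})\neq\emptyset$.

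Both operators are frequently hypercyclic by assumption, so the hypotheses of Remark \ref{remimp} hold with $\boldsymbol{v}=\boldsymbol{w(j)}$ (the frequently hypercyclic role) and $\boldsymbol{w}=\boldsymbol{w(i)}$ (the hypercyclic role). That remark, namely the proof of Proposition \ref{ctrex2} run with $m>1/\varepsilon$, then yields increasing sequences $(m_l)$ and $(n_l)$ with $\liminf_l n_l/m_l\geq m^{-3}>0$ and $\limsup_l n_l/m_l\leq 1/m<\varepsilon$. These sequences satisfy
\[
\sum_{l\geq 1}\frac{\vert W_{m_l}(i)\vert^p}{\vert W_{m_l}(j)\vert^p\,\vert W_{m_l-n_l}(i)\vert^p}<\infty .
\]
Setting $m_{i,j,l}:=m_l$ and $n_{i,j,l}:=n_l$ gives exactly the statement, since $v_1\ldots v_{m_l}=W_{m_l}(j)$ and $w_1\ldots w_{m_l}=W_{m_l}(i)$.

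The only point that needs care is the matching of roles. The index $j$, which appears alone in the denominator, must be the frequently hypercyclic operator: its orbit is what forces $\vert x_{m_l}\vert>1/2$ along the set $\mathcal{N}_{\boldsymbol{v}}$, which has positive lower density. The index $i$ must be the operator whose orbit comes close to $0$ along a sequence $n_l$, which only requires hypercyclicity.

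A secondary check is that the strict inequality $\limsup<\varepsilon$ really follows. Choosing $m>1/\varepsilon$ gives $n_l/m_l\leq 1/m<\varepsilon$, so the strict bound holds. Finally, since $i$ and $j$ were arbitrary distinct indices, the argument applies to every ordered pair, and the statement is proved; no uniformity in $(i,j)$ is required.
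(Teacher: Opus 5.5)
Your proposal is correct and is essentially the paper's own argument. The paper presents Theorem~\ref{interden} as the countable version of Theorem~\ref{interfin}, which is proved exactly by placing a common frequently hypercyclic vector in $FHC(B_{\boldsymbol{w(j)}})\cap HC(B_{\boldsymbol{w(i)}})$ and invoking Remark~\ref{remimp} with the role assignment you identified ($\boldsymbol{v}=\boldsymbol{w(j)}$, $\boldsymbol{w}=\boldsymbol{w(i)}$); just make explicit that $m$ is taken with both $m>2/c$, where $c=\underline{\mathrm{dens}}(\mathcal{N}_{\boldsymbol{v}})$, and $m>1/\varepsilon$.
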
		
	
The following example shows that Theorem \ref{weightedshiftscounta22} and Theorems \ref{interfin} or \ref{interden} jointly enable the connection of limit cases.

\begin{example} \rm {Let $0<\varepsilon<1$ and $\beta>1$. Let us consider sequences $\boldsymbol{w}=(w_n)$ and $\boldsymbol{v}=(v_n)$ so that, for $n\geq 2$, $w_1\ldots w_n=e^{\log^{1+\varepsilon}(n)}$ and $v_1\ldots v_n=n^{\beta}$. The associated weighted backward shifts $B_{\boldsymbol{v}}$ and $B_{\boldsymbol{w}}$ are frequently hypercyclic on $\ell_1$. We have 
		\[
		FHC(B_{\boldsymbol{w}})\cap HC(B_{\boldsymbol{v}})=\emptyset,\hbox{ if }\varepsilon>1,\quad\hbox{and}\quad FHC(B_{\boldsymbol{w}})\cap FHC(B_{\boldsymbol{v}})\ne\emptyset,\hbox{ if }0<\varepsilon\leq 1.
		\]
		 
		\begin{proof} First we consider the case $\varepsilon>1$. Let $(m_j)$ and $(n_j)$ be increasing sequences of positive integers such that $cm_j\leq n_j\leq\delta m_j$ for some $0<c<\delta<1$. We get 
			\[
			\frac{\vert w_{1}\ldots w_{m_j}\vert}{\vert w_{1}\ldots w_{m_j-n_j}\vert\vert v_1\ldots v_{m_j}\vert}=\frac{e^{\log^{1+\varepsilon}(m_j)-\log^{1+\varepsilon}(m_j-n_j)}}{m_j^{\beta}}.
			\]
Since $\log(1-\delta)\leq \log(1-n_j/m_j)\leq\log(1-c)<0$, observe that 
	\[
	\log^{1+\varepsilon}(m_j)-\log^{1+\varepsilon}(m_j-n_j)-\beta\log(m_j)=-(1+\varepsilon)\log^{\varepsilon}(m_j)\log\left(1-\frac{n_j}{m_j}\right)\left(1+o(1)\right),	
	\]			
which implies
\[
\frac{\vert w_{1}\ldots w_{m_j}\vert}{\vert w_{1}\ldots w_{m_j-n_j}\vert\vert v_1\ldots v_{m_j}\vert}\to\infty\hbox{ as }j\to\infty.
\]			
	Proposition \ref{ctrex2} allows us to conclude.	\\
	
	Now we deal with the case $\varepsilon=1$. In this instance, we will apply Theorem \ref{weightedshiftscounta22}. Our next step is to check the required conditions. Let $1<\gamma<m-1$ be positive integers to be specified later. Since $\frac{v_1\ldots v_n}{w_1\ldots w_n}\to 0$ as $n\to \infty$, we are interested in the convergence of the series 
	$$ \sum_{n\geq 1}\sup_{l\in[m^n,\gamma m^n]\cap\mathbb{N}}\left(\sum_{j=\lfloor\alpha_l\rfloor+1}^{m^{n+1}-l-1}\frac{1}{\vert w_1\ldots w_j\vert}+\sum_{j\geq m^{n+1}}\frac{\vert w_1\ldots w_j\vert}{ \vert  w_1\ldots w_{j-l}\vert\ \vert v_1\ldots v_j\vert}\right)$$
	and 
	$$\sum_{n\geq 1}\sup_{l\in[m^n,\gamma m^n]\cap\mathbb{N}}\left( \sum_{j=\lfloor\alpha_l\rfloor+1}^{m^{n+1}-l-1}\frac{1}{\vert v_1\ldots v_j\vert}+\sum_{j\geq m^{n+1}}\frac{1}{\vert v_1\ldots v_{j-l}\vert}\right)$$
	with $(\alpha_n)$ and $2\leq\gamma<m$ well chosen.\\ 
	On one hand, for all $n$ sufficiently large, $l\in [m^n,\gamma m^n]$ and $j\geq m^{n+1}$, we get
	\[
	\frac{\vert w_{1}\ldots w_{j}\vert}{\vert w_{1}\ldots w_{j-l}\vert\vert v_1\ldots v_{j}\vert}=\frac{e^{\log^{2}(j)-\log^{2}(j-l)}}{j^{\beta}}=e^{-2\log(j)\log(1-\frac{l}{j})-\log^2(1-\frac{l}{j})-\beta\log(j)}.
	\]
	Since, for all $l\in [m^n,\gamma m^n]$ and $j\geq m^{n+1}$,
	\[
	-\log\left(1-\frac{l}{j}\right)\leq -\log\left(1-\frac{\gamma}{m}\right)\hbox{ and }e^{-\log^2(1-\frac{l}{j})}\leq 1,
	\]		
we deduce
\[ 
\frac{\vert w_{1}\ldots w_{j}\vert}{\vert w_{1}\ldots w_{j-l}\vert\vert v_1\ldots v_{j}\vert}\leq e^{-2\log(j)(\beta+\log(1-\frac{\gamma}{m}))}.
\]	
Let us choose $m>\frac{\gamma}{1-e^{-(\beta-1)}}$. We obtain $2(\beta+\log(1-\frac{\gamma}{m}))>2$. Thus we deduce
\[
\sum_{j\geq m^{n+1}}\frac{\vert w_{1}\ldots w_{j}\vert}{\vert w_{1}\ldots w_{j-l}\vert\vert v_1\ldots v_{j}\vert}\leq \sum_{j\geq m^{n+1}}j^{-2}\lesssim m^{-(n+1)}.
\]
The series 	$\sum_{n\geq 1}m^{-(n+1)}$ is convergent. Moreover, for every $l\in[m^n,\gamma m^n]$ large enough, taking $\alpha_l=\log^{\sigma} (l)$ with $\sigma(\beta-1)>1$, 
\[
\sum_{j\geq \lfloor\alpha_l\rfloor+1}\vert w_1\ldots w_j\vert^{-1}=\sum_{j\geq \lfloor\alpha_l\rfloor+1}e^{-\log^2(j)}\lesssim \sum_{j\geq \lfloor\alpha_l\rfloor+1}j^{-\beta}\lesssim 
(\lfloor\alpha_l\rfloor+1)^{-\beta}\leq \left(\frac{1}{n\log (m)}\right)^{\sigma(\beta-1)}
\]
and the series $\sum_{n\geq 1}\frac{1}{n^{2\sigma}\log^{\sigma} (m)}$ is convergent since $\sigma(\beta-1)>1$.\\ 
On the other hand, for all $n$ sufficiently large, $l\in [m^n,\gamma m^n]$ and $j\geq m^{n+1}$, we get
\[
\sum_{j\geq m^{n+1}}\frac{1}{\vert v_1\ldots v_{j-l}\vert}\leq \sum_{j\geq m^{n+1}}\frac{j^{-\beta}}{(1-\frac{\gamma}{m})^{\beta}} 
\lesssim m^{-(\beta-1)(n+1)},
\]
and the series is $\sum_{n\geq 1}m^{-(\beta-1)(n+1)}$ is convergent. Furthermore, it is easy to check that 
\[
\sum_{j\geq \lfloor\alpha_l\rfloor+1}\vert v_1\ldots v_j\vert^{-1}=\sum_{j\geq \lfloor\alpha_l\rfloor+1}j^{-\beta}\lesssim 
(\lfloor\alpha_l\rfloor+1)^{-\beta}\leq \left(\frac{1}{n\log (m)}\right)^{\sigma(\beta-1)}
\]
and the series $\sum_{n\geq 1}\frac{1}{n^{\sigma (\beta-1)}\log^{\sigma} (m)}$ is convergent. The hypotheses of Theorem \ref{weightedshiftscounta22} are satisfied. The conclusion holds. \\

Finally the case $0<\varepsilon<1$ is left to the reader, as it follows similarly from the case $\varepsilon=1$ with simpler computations. 
\end{proof}}

\end{example}

\section{Polynomials of weighted backward shifts}\label{poly} We extend the existence criterion for common frequently hypercyclic vectors from Section \ref{abstheory} to the case of denumerable families of polynomials of backward shift operators.

\subsection{Notations} Let $1\leq p<\infty$. Let $B_{\boldsymbol{w}}$ a weighted frequently hypercyclic backward shift operator on $\ell_p$. We are interested in the polynomials of $B_{\boldsymbol{w}}$. We retain the notation from Section \ref{prelim}. We will also use the fact that, under suitable conditions on the polynomials, there exists an adapted basis related to the backward shift $B_{\boldsymbol{w}}$ such that $P(B_{\boldsymbol{w}})$ can be interpreted as a backward shift operator with respect to this basis (see Lemma 4.1 in \cite{MouMun}). 

\begin{lemma}\label{constructionpoly} Let $1\leq p<\infty$. Let $P=\sum_{i=1}^d\lambda_iz^i$ be a polynomial, with $\lambda_1\ne0$ and $\vert\lambda_1\vert >\sum_{i=2}^d\vert \lambda_i\vert$. Let also  $B_{\boldsymbol{w}}$ be a weighted backward shift on $\ell_p$ that is frequently hypercyclic. 
	There exist polynomials $u_0=e_0$ and $u_k=\sum_{j=0}^{k}\beta_{j,k}e_j,$ 
	such that, for every $k=1,2,\dots$ and for every $l=1,\dots,k,$ we have
	\begin{enumerate}[(i)]
		\item $P(B_{\boldsymbol{w}})(u_{k})=u_{k-1},$
		\item $\beta_{0,k}=0,$ $\beta_{k,k}\ne 0,$  $\vert \beta_{l,k}\vert\leq \left(\displaystyle\frac{1}{\vert\lambda_1\vert-\sum_{j=2}^{d_k}
			\vert\lambda_j\vert}\right)^k\displaystyle\frac{1}{\vert w_1w_2\dots w_l\vert},$
		\item $\Vert u_{k}\Vert_{p}\leq 
		\displaystyle\frac{C_{\boldsymbol{w}}}{(\vert\lambda_1\vert-\sum_{j=2}^d
			\vert\lambda_j\vert)^{k}}$,
	\end{enumerate}
	where $C_{\boldsymbol{w}}:=\left(\sum_{j\geq 1}\frac{1}{\vert w_1\ldots w_j\vert^p}\right)^{1/p}$.
\end{lemma}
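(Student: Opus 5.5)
We build the $u_k$ recursively by inverting $P(B_{\boldsymbol{w}})$ on polynomials, using that $P(B_{\boldsymbol{w}})=\lambda_1 B_{\boldsymbol{w}}(I+Q(B_{\boldsymbol{w}}))$ with $Q(z)=\sum_{i=2}^d\frac{\lambda_i}{\lambda_1}z^{i-1}$. Let $F_{\boldsymbol{w}}$ be the forward "inverse'' shift defined on $c_{00}$ by $F_{\boldsymbol{w}}e_j=\frac{1}{w_{j+1}}e_{j+1}$. Then $B_{\boldsymbol{w}}F_{\boldsymbol{w}}=I$ on $c_{00}$, and $F_{\boldsymbol{w}}$ maps $\mathrm{span}(e_1,\dots,e_{k-1})$ into $\mathrm{span}(e_2,\dots,e_k)$. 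It is convenient to work in the rescaled coordinates $x=\sum_j \frac{a_j}{w_1\cdots w_j}e_j$. In these coordinates $B_{\boldsymbol{w}}$ acts as the unweighted backward shift $B$ on $(a_j)$ and $F_{\boldsymbol{w}}$ as the unweighted forward shift $S$. Hence $P(B_{\boldsymbol{w}})$ corresponds to $P(B)$ acting on coefficient sequences $a$.

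We set $u_0=e_0$ and look for $u_k$ with coefficients $\beta_{j,k}=a^{(k)}_j/(w_1\cdots w_j)$ and $a^{(k)}_0=0$. The requirement $P(B)a^{(k)}=a^{(k-1)}$ is a triangular linear system. Write it as $\lambda_1\big(Ba^{(k)}+\sum_{i\ge2}\frac{\lambda_i}{\lambda_1}B^i a^{(k)}\big)=a^{(k-1)}$ and solve from the top coefficient downward:
\[
a^{(k)}_{j}=\frac{1}{\lambda_1}\Big(a^{(k-1)}_{j-1}-\sum_{i=2}^d\lambda_i a^{(k)}_{j-1+i}\Big),\quad j=k,k-1,\dots,1,
\]
with $a^{(k)}_j=0$ for $j>k$ and $a^{(k)}_0=0$. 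This gives (i) directly. For the constraint at index $0$, the $e_0$-coefficient of $P(B)a^{(k)}$ must equal $a^{(k-1)}_0$; we define $a^{(k)}_1$ by the recursion at $j=1$ so that this equation is satisfied, and $a^{(k)}_0$ never enters because $P$ has no constant term. Moreover $a^{(k)}_k=a^{(k-1)}_{k-1}/\lambda_1=\lambda_1^{-k}\neq0$, so $\beta_{k,k}\ne0$ and $\beta_{0,k}=0$.

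For the bound in (ii), set $\mu=|\lambda_1|-\sum_{i\ge2}|\lambda_i|>0$ and prove by induction on $k$ that $\max_j|a^{(k)}_j|\le \mu^{-k}$. Let $M_k=\max_j|a^{(k)}_j|$ and pick $j$ attaining it. The recursion gives $|\lambda_1| M_k\le M_{k-1}+\sum_{i\ge2}|\lambda_i| M_k$, hence $M_k\le M_{k-1}/\mu$. This is the usual diagonal-dominance argument. It is the step needing the most care, since the downward recursion mixes entries of the same vector; taking the maximum over $j$ avoids any need for an inner induction. Dividing by $|w_1\cdots w_l|$ then yields (ii).

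For (iii), we get $\|u_k\|_p^p=\sum_{l=1}^k|\beta_{l,k}|^p\le \mu^{-kp}\sum_{l\ge1}|w_1\cdots w_l|^{-p}=\mu^{-kp}C_{\boldsymbol{w}}^p$. This is finite because $B_{\boldsymbol{w}}$ is frequently hypercyclic, so condition (\ref{condweighthyp}) holds. The main obstacle is only bookkeeping: matching the normalization of $P(B_{\boldsymbol{w}})$ in the rescaled coordinates, and making sure the index-$0$ equation is consistent. The latter holds automatically because $P(0)=0$.
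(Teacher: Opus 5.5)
Your proof is correct and complete. The paper gives no proof of this lemma and simply imports it from Lemma 4.1 of \cite{MouMun}, so your argument supplies what the paper leaves to the reference. You conjugate $B_{\boldsymbol{w}}$ to the unweighted shift via $a_j=w_1\cdots w_j\,\beta_{j,k}$, and solve $P(B)a^{(k)}=a^{(k-1)}$ by downward back-substitution; $a^{(k)}_0$ is free because $P(0)=0$. The diagonal-dominance estimate $\mu M_k\le M_{k-1}$ then yields (i)--(iii), including $\beta_{k,k}=\lambda_1^{-k}/(w_1\cdots w_k)\neq 0$.

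Taking the maximum over $j$ (attained at some $j\ge 1$ since $a^{(k)}_0=0$) replaces the inner downward induction on $l$. That induction would use the identity $\frac{1}{|\lambda_1|}\bigl(\mu^{-(k-1)}+\sum_{i\ge2}|\lambda_i|\,\mu^{-k}\bigr)=\mu^{-k}$. The factorization through $F_{\boldsymbol{w}}$ and $Q$ in your first paragraph is never used and can be dropped. The $d_k$ in (ii) of the statement is a typo for $d$.
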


\begin{definition} \rm{Let $1\leq p<\infty$. A polynomial $P=\sum_{i=1}^d\lambda_iz^i$ such that $\lambda_1\ne 0$ and $\vert\lambda_1\vert >\sum_{i=2}^d\vert \lambda_i\vert$ is said to be \textit{admissible}. Let also  $B_{\boldsymbol{w}}$ be a weighted backward shift on $\ell_p$ that is frequently hypercyclic. The sequence $(u_j)$ in $c_{00}$ given by Lemma \ref{constructionpoly} is said to be \textit{associated to the operator $P(B_{\boldsymbol{w}})$}.}
\end{definition}

Now let us consider a family of polynomials of weighted backward shifts $(P_k(B_{\boldsymbol{w(k)}}))_{k\in\mathbb{N}}$ where, for every positive integer $k$, $P_k:=\sum_{i=1}^{d_k}\lambda_i^{(k)}z^i$ is an admissible polynomial and $B_{\boldsymbol{w(k)}}$ is a weighted backward shift on $\ell_p$ that is frequently hypercyclic, i.e. 
$$C_{\boldsymbol{w(k)}}:=\left(\sum_{j\geq 1}\frac{1}{\vert w_1(k)\ldots w_j(k)\vert^p}\right)^{1/p}<\infty.$$
We denote by $(\boldsymbol{u^{(k)}})$ the associated sequence $\boldsymbol{u^{(k)}}=(u_j^{(k)})$ to the operator $P_k(B_{\boldsymbol{w(k)}})$ in $c_{00}$ given by Lemma \ref{constructionpoly}. Thus setting $u_j^{(k)}:=\sum_{i=0}^j\beta_{i,j}^{(k)}e_j$, we have, for every $j=1,2,\dots$, and for every $l=1,\dots,j$,
\begin{equation}\label{polyhypofonda}
	\begin{minipage}{0.9\textwidth} 
		\begin{itemize}
			\item $P_k(B_{\boldsymbol{w(k)}})(u_{j})=u_{j-1},$
			\item $\beta_{0,j}^{(k)}=0,$ $\beta_{j,j}^{(k)}\ne 0,$  $\vert \beta_{l,j}^{(k)}\vert\leq \left(\displaystyle\frac{1}{\vert\lambda_1^{(k)}\vert-\sum_{i=2}^{d_k}
				\vert\lambda_i^{(k)}\vert}\right)^j\displaystyle\frac{1}{\vert w_1(k)\ldots w_l(k)\vert},$
			\item $\Vert u_{j}^{(k)}\Vert_{p}\leq 
			\displaystyle\frac{C_{\boldsymbol{w(k)}}}{(\vert\lambda_1^{(k)}\vert-\sum_{i=2}^{d_k}
				\vert\lambda_i^{(k)}\vert)^{j}}.$
		\end{itemize}
	\end{minipage}
\end{equation}
Observe that, for the operator $P_k(B_{\boldsymbol{w(k)}})$, the following crucial property always holds 
$$P_k(B_{\boldsymbol{w(k)}})(\sum_{k\geq 0}X_ku_k)=\sum_{k\geq 0}X_{k+1}u_k\hbox{ and }P_k(B_{\boldsymbol{w(k)}})=B[u^{(k)}].$$ 
From here on, we simplify the notations by setting 
\[ \Pi_k:=P_k(B_{\boldsymbol{w(k)}})\quad\hbox{ and }\quad \rho_k:=\displaystyle\frac{1}{\vert\lambda_1^{(k)}\vert-\sum_{i=2}^{d_k}
	\vert\lambda_i^{(k)}\vert}.
\]
Let us also define the following sequence of operators
$$\begin{array}{rccl} T_n(\Pi_k^n)\ :& E_p[\boldsymbol{u^{(k)}}]&\rightarrow &\ell_p\\&
	\sum_{j\geq 0}X_ju_j^{(k)}&\mapsto &\sum_{j=0}^{n}X_{j+n}u_j^{(k)},\end{array}$$ 
where $E_p[\boldsymbol{u^{(k)}}]=\left\{\sum_{j\geq 0}x_ju_j^{(k)};\ \sum_{k\geq 0}\vert x_j\vert \Vert u_j^{(k)}\Vert_{p}<+\infty\right\}\subset \ell_p.$ \\

\subsection{Common frequent hypercyclicity} Again throughout this section, given a weight sequence $(w_n)$, we denote its partial products by $W_n:=w_1\ldots w_n$, ($n=1,2,\dots$) with the convention $W_0=1$.\\

In the following, we consider  a family $(P_k)_{k\in\mathbb{N}}$, $P_k:=\sum\limits_{i=1}^{d_k}\lambda_i^{(k)}z^i$, of admissible polynomials such that there exists $\delta>0$ so that, for all positive integer $k$, $\vert\lambda_1^{(k)}\vert-\sum_{i=2}^{d_k}\vert\lambda_i^{(k)}\vert\geq 1+\delta$. Thus, using the previous notations, for all $k\geq 1$, 
$$\rho_k\leq \frac{1}{1+\delta}:=\rho.$$
Let $1\leq p<\infty$. Let also $(X_j)$ be a sequence of i.i.d. complex random variables in $L^1$. Let $\boldsymbol{w(k)}=(w_n(k))_{n\in\mathbb{N}}$, $k\in \mathbb{N}$, be countably many weights for which every weighted backward shift $B_{\boldsymbol{w(k)}}$, $k\in \mathbb{N}$, is a continuous operator on the complex Banach space $\ell_p$ and $\inf_{k\in\mathbb{N}} \vert W_n(k)\vert>0$, for all $n\in\mathbb{N}$. We obtain the following statement which gives the existence of a common frequently hypercyclic random vectors for the family of operators $(P_k(B_{\boldsymbol{w(k)}}))_{k\geq 1}$.

\begin{theorem}\label{polyweightedshiftscounta22} Let $1\leq p<\infty$ and $\boldsymbol{w(k)}=(w_n(k))_{n\in\mathbb{N}}$, $k\in \mathbb{N}$, be countably many weights for which every weighted backward shift $B_{\boldsymbol{w(k)}}$, $k\in \mathbb{N}$, is a continuous operator on $\ell_p$. Let $(P_k)_{k\in\mathbb{N}}$, $P_k:=\sum\limits_{i=1}^{d_k}\lambda_i^{(k)}z^i$, be a family of polynomials such that there exists $\delta>0$ so that, for all positive integers $k$, $\vert\lambda_1^{(k)}\vert-\sum_{i=2}^{d_k}\vert\lambda_i^{(k)}\vert\geq 1+\delta$. Let $(X_j)$ be a sequence of i.i.d. complex random variables in $L^1$. Assume that 
	\begin{enumerate}[(i)]
		\item there exists $C\geq 1$ such that, for any $k\in\mathbb{N}$, $\limsup\limits_{n\to\infty}\vert w_n(k)\vert\leq C$;
		%\item for all $n\in\mathbb{N}$, $\inf_{k\in\mathbb{N}} \vert W_n(k)\vert>0$;
		\item the series $\sum_{j\geq 1}\left( \inf_{k\in\mathbb{N}}\vert W_j(k)\vert \right)^{-p}$ is convergent.
	\end{enumerate}	
	Then there exists a positive integer $m\geq 2$ such that the vector $Z=\sum_{j\geq m}X_j u_j^{(\psi\left(\lfloor \frac{\log (j)}{\log (m)}\rfloor\right))}$ is a common frequently hypercyclic random vector for the family $(P_k(B_{\boldsymbol{w(k)}}))_{k\in\mathbb{N}}$ of operators on $\ell_p$, where, for all $k\geq 1$,  $(u_j^{(k)})$ is the associated sequence to the operator $P_k(B_{\boldsymbol{w(k)}})$
\end{theorem}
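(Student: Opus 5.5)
We follow the scheme of the proof of Theorem \ref{weightedshiftscounta}, replacing the canonical basis by the adapted bases $\boldsymbol{u^{(k)}}$ of Lemma \ref{constructionpoly}.

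\textbf{Step 1: setup.} For each $k$ set $Z_k:=\sum_{j\geq m}X_ju_j^{(k)}$. By (\ref{polyhypofonda}) we have $\Vert u_j^{(k)}\Vert_p\leq C_{\boldsymbol{w(k)}}\rho^j$, and $C_{\boldsymbol{w(k)}}\leq C':=(\sum_j(\inf_i\vert W_j(i)\vert)^{-p})^{1/p}<\infty$ by (ii). So $(u_j^{(k)})$ is an admissible sequence with geometric decay, uniformly in $k$. Since $X\in L^1$, Markov's inequality gives the decay condition (\ref{NikulaDecayCond}). Theorem \ref{gdnbre} then yields, almost surely, a set $A_k$ with $\mathrm{dens}(A_k)>0$ such that $\Vert T_n(\Pi_k^n)(Z_k)-h\Vert_p<\varepsilon$ for $n\in A_k$. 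We also note that $Z$ lies in $\ell_p$ almost surely: $\mathbb{E}\sum_j\vert X_j\vert\Vert u_j\Vert_p\lesssim\sum_j\rho^j<\infty$.

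\textbf{Step 2: decomposition.} Fix $\gamma\geq2$, $n\in R_k$ and $l\in[m^n,\gamma m^n]$. Since $\Pi_k$ acts as the backward shift on $\boldsymbol{u^{(k)}}$, we write $Z=Z_k+\sum_{j\geq m^{n+1}}X_j(u_j^{(\psi(\cdot))}-u_j^{(k)})+(\text{blocks below }m^n)$. The blocks of index $<m^n$ with index $\neq n$ agree with $Z_k$ up to a finite-dimensional difference supported on coordinates $<m^n\leq l$. That difference is killed by $\Pi_k^l$: each $u_j$ with $j<m^n$ has degree $<l$, and $\Pi_k^l$ is a polynomial in $B_{\boldsymbol{w(k)}}$ of valuation $l$. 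A cleaner route is to decompose coordinatewise, since $\Pi_k^l=B_{\boldsymbol{w(k)}}^lQ(B_{\boldsymbol{w(k)}})$ annihilates every vector supported on $[0,l)$. We then get
\[
\Pi_k^l(Z)=T_l(\Pi_k^l)(Z_k)+\sum_{j=2l+1}^{m^{n+1}-1}X_ju_{j-l}^{(k)}+\Pi_k^l\Big(\sum_{j\geq m^{n+1}}X_ju_j^{(\psi(\lfloor\log j/\log m\rfloor))}\Big).
\]
The middle term is bounded as in (\ref{equpfmain5}): by Lemma \ref{estim_var} (with $p=1$, so $\vert X_j\vert\leq j^{1+\varepsilon}$ eventually), its norm is $\lesssim\sum_{j>2l}j^{1+\varepsilon}\rho^{j-l}\lesssim\rho^{m^n/2}$.

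\textbf{Step 3: the tail, the main obstacle.} Here $\Pi_k^l$ acts on $u_j^{(i)}$ with $i\neq k$, so the adapted-basis structure is lost. We bound instead $\Vert\Pi_k^l\Vert\leq\Vert P_k(B_{\boldsymbol{w(k)}})\Vert^l$. Assumption (i) gives $\limsup_n\vert w_n(k)\vert\leq C$, so, after discarding a finite set of coordinates, $\Vert B_{\boldsymbol{w(k)}}^l x\Vert\lesssim_k C^l\Vert x\Vert$ on vectors supported beyond a fixed index. Since the coefficients $\lambda^{(k)}_i$ are fixed for each $k$, $\Vert\Pi_k^l\Vert\lesssim_k M_k^l$ with $M_k$ finite. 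The remaining difficulty is that $m$ must be chosen uniformly in $k$, while $M_k$ depends on $k$ through $\sum_i\vert\lambda_i^{(k)}\vert$. I would first try to bound $\Pi_k^l$ on $u_j^{(i)}$ coordinatewise, using (ii) to estimate coefficients via $\vert\beta^{(i)}_{s,j}\vert\leq\rho^j/\inf_i\vert W_s(i)\vert$. This gives
\[
\Big\Vert\Pi_k^l\sum_{j\geq m^{n+1}}X_ju_j\Big\Vert\lesssim\sum_{j\geq m^{n+1}}j^{1+\varepsilon}M_k^{\gamma m^n}\rho^j ,
\]
which is $\lesssim(M_k^{\gamma}\rho^{m/2})^{m^n}$. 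For fixed $k$ this is summable provided $m$ is large relative to $k$. If uniformity fails, I would use the freedom in $\psi$: reindex so that operator $k$ uses blocks in $R_{k}$ only for $n$ beyond a threshold, or let the ratio $m$ effectively grow along $R_k$. Lemma \ref{lemma_combination_density} still gives positive lower density for each fixed $k$, since it only needs fixed $m,\gamma$ for that $k$.

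\textbf{Step 4: conclusion.} Combining Steps 1–3, almost surely $\Vert\Pi_k^l(Z)-h\Vert_p<3\varepsilon$ for $l\in A_k\cap\bigcup_{n\in R_k,n\geq N_k}[m^n,\gamma m^n]$. This set has positive lower density by Lemma \ref{lemma_combination_density}, and Lemma \ref{lemmefondamproba} then upgrades the statement to all $h\in\ell_p$ and all $k$ simultaneously.
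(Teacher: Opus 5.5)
Your Steps 1, 2 and 4 follow the paper's argument. Both use the same concatenation of blocks through $\psi$ and the same splitting of $\Pi_k^l(Z)$ into $T_l(\Pi_k^l)(Z_k)$, a middle term controlled by the geometric decay of $\Vert u^{(k)}_{j-l}\Vert_p$, and a tail. Both then conclude with Lemmas~\ref{lemma_combination_density} and~\ref{lemmefondamproba}. The gap is in Step 3, and neither patch you suggest closes it.

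Your bound $\Vert\Pi_k^l\Vert\lesssim_k M_k^l$, with $M_k$ of order $\sum_i\vert\lambda_i^{(k)}\vert((1+\eta)C)^i$, is the correct one. So a single $m$ must satisfy $M_k^{\gamma}\rho^{m/2}<1$ for all $k$, which is impossible when $\sup_k M_k=\infty$. Your two patches fail for the following reasons:
\begin{itemize}
\item Starting the blocks of operator $k$ beyond a threshold does not change the ratio between $l\leq\gamma m^n$ and the first index $m^{n+1}$ of the next block. That ratio is what the estimate needs.
\item Letting the ratio grow with $k$ forces, after every window of operator $k$, an interval $(l,R_kl)$ with $R_k\to\infty$ containing no block of an operator with small coefficients, say operator $1$. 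Every $R_k$ is infinite, so for each $k$ the windows of operator $1$ infinitely often occupy a proportion at most $1/R_k$ of $[1,R_kl]$. Their lower density is then $0$, and Lemma~\ref{lemma_combination_density} is no longer available.
\end{itemize}

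This obstruction is genuine: without a bound on the $P_k$ that is uniform in $k$, the statement is false. Take $\boldsymbol{w(k)}\equiv 2$ and $P_k(z)=2^kz$. The hypotheses hold with $\delta=1$, $C=2$ and $\sum_j 2^{-jp}<\infty$, and $P_k(B_{\boldsymbol{w(k)}})=2^{k+1}B$. Suppose $x$ were frequently hypercyclic for all these operators, and set $N_\mu=\{n:\Vert\mu^nB^nx-e_0\Vert_p<1/2\}$. For $n\in N_\lambda$ and $n'\in N_4$ with $n'>n$ we get $\frac12 4^{-n'}<\vert x_{n'}\vert<\frac12\lambda^{-n}$. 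Hence $N_4$ misses $(n,n\log\lambda/\log 4]$ for every $n\in N_\lambda$, so $\underline{\mathrm{dens}}(N_4)\leq\log 4/\log\lambda$ for every $\lambda=2^{k+1}$, that is, $\underline{\mathrm{dens}}(N_4)=0$. This is the unbounded case of the characterization for $\lambda B$ recalled in the introduction.

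The paper's proof obtains the $k$-independent factor $((1+\eta)C)^l$ only because (\ref{d2equpfmain4bis}) expands $\Pi_k^l$ as $\sum_s\lambda_s^{(k)}B_{\boldsymbol{w(k)}}^{s+l}$. That operator is $B_{\boldsymbol{w(k)}}^l\Pi_k$, not $\Pi_k^l$, so your hesitation is well founded.

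Suppose one adds the hypothesis $M:=\sup_k\sum_i\vert\lambda_i^{(k)}\vert((1+\eta)C)^i<\infty$, the countable analogue of the constant $M$ in the paper's remark on finite families. Then your Step 3 goes through with one $m>2\gamma$ such that $M^{\gamma}\rho^{m/2}<1$, and the rest of your proof is fine. One further omission remains: like the paper, you need $X$ to have full support in $\mathbb{C}$ to apply Theorem~\ref{gdnbre}, and for $X\equiv 0$ the conclusion fails.
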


\begin{proof} Let us define $C_{\boldsymbol{\inf}}:=\left(\sum_{j\geq 1}\frac{1}{\inf_{k\in\mathbb{N}}\left\vert W_{j}(k)\right\vert^p}\right)^{1/p}<\infty$. Notice that the conditions imply that each backward shift $B_{\boldsymbol{w(k)}}$is a frequently hypercyclic operator on $\ell_p$. As usual, we set, for all $k\geq 1$, $C_{\boldsymbol{w(k)}}:=(\sum_{j\geq 1}\vert W_j(k)\vert^{-p})$. 
We define, for all positive integers $k$, 
	$${Z}_k:=\sum_{j\geq 1}X_j u_j^{(k)}.$$
	Let $\gamma\geq 2$ be an integer. Fix a positive integer $m\geq 2\gamma +1$ to be specified later. Let us also consider the random vector 
	$$Z=\sum\limits_{j\geq m} X_ju_{j}^{(\psi(\lfloor\frac{\log (j)}{\log(m)}\rfloor))}=\sum_{n\geq 1}\sum_{j=m^n}^{m^{n+1}-1}X_ju_j^{(\psi(n))}.$$
	The vector $Z$ is thus constructed by concatenating specific blocks from the vectors $(Z_k)_{k\geq 1}$. First, we show that these blocks satisfy the frequent approximation property as stated in Theorem \ref{gdnbre}. Let $k\geq 1$, $h$ be in $c_{00}$ and $\varepsilon>0$. Since the random variables $X_k$ are in $L^p$, Theorem \ref{gdnbre} ensures that there exists a subset $A_k$ of $\mathbb{N}$ with $\rm{dens}(A_k)>0$ such that 
	\begin{equation}\label{dpoly1001}
		\Vert T_{n}(\Pi_k^n)({Z_k})-h\Vert_p<\varepsilon\quad \hbox{almost surely}.
	\end{equation}	
	 For all $n\in R_k$ and for every $l\in [m^n,\gamma m^n]\cap\mathbb{N}$, we get
	\begin{align}\label{d2equpfmain4}
		\begin{split}
			\Pi_k^l(Z)=&\sum_{j=l}^{2l}X_ju_{j-l}^{(k)} +\sum_{j=2l+1}^{m^{n+1}-1}X_ju_{j-l}^{(k)}+
			\sum_{j\geq m^{n+1}}X_j\Pi_k^l\left(u_{j}^{(\psi(\lfloor\frac{\log (j)}{\log(m)}\rfloor))}\right)\\:=&\ \sum_{j=l}^{2l}X_ju_{j-l}^{(k)}+Q_{n,l,k,1}+Q_{n,l,k,2}.
		\end{split}
	\end{align}
	with		
	\begin{align}\label{d2equpfmain4bis}
		\begin{split}
			\Pi_k^l\left(u_{j}^{(\psi(\lfloor\frac{\log (j)}{\log(m)}\rfloor))}\right)=&\sum_{s=1}^{d_k}\lambda_{s}^{(k)}B_{\boldsymbol{w(k)}}^{s+l}\left(\sum_{i=0}^j\beta_{i,j}^{(\psi(\lfloor\frac{\log (j)}{\log(m)}\rfloor))}e_i\right)\\
			=&\sum_{s=1}^{d_k}\sum_{i=s+l}^j\lambda_{s}^{(k)}\beta_{i,j}^{(\psi(\lfloor\frac{\log (j)}{\log(m)}\rfloor))}w_{i-(s+l)+1}(k)\ldots w_{i}(k)e_{i-(s+l)},
		\end{split}
	\end{align}
	We need to estimate the terms $Q_{n,l,k,1}$ and $Q_{n,l,k,2}$. We begin with the term $Q_{n,l,k,2}$: Lemma \ref{estim_var}, properties (\ref{polyhypofonda}) and the assumptions of the theorem lead to, for all $n\in R_k$ and for every $l\in [m^n,\gamma m^n]\cap\mathbb{N}$, almost surely
	\begin{align}
		\begin{split}
			\sum_{j\geq m^{n+1}}\sum_{s=1}^{d_k}\sum_{i=s+l}^j\lambda_{s}^{(k)}&\beta_{i,j}^{(\psi(\lfloor\frac{\log (j)}{\log(m)}\rfloor))}\frac{W_i(k)}{W_{i-(s+l)}(k)}e_{i-(s+l)}\\
				=&
				\sum_{t\geq 0}e_t\left(\sum_{s=1}^{d_k}\lambda_{s}^{(k)}\sum_{j\geq\max(m^{n+1},t+s+l)}X_j\beta_{t+s+l,j}^{(\psi(\lfloor\frac{\log (j)}{\log(m)}\rfloor))}\frac{W_{t+s+l}(k)}{W_t(k)}\right)
		\end{split}		
	\end{align}

The triangle inequality, Lemma \ref{estim_var} and properties (\ref{polyhypofonda}) lead to, for any $k\geq 1$, for all $n\in R_k$ and for any $l\in [m^n,\gamma m^n]\cap\mathbb{N}$,
\begin{align}
	\begin{split}
\Vert Q_{n,l,k,2}\Vert_p^p=&\sum_{t\geq 0}\left\vert\sum_{s=1}^{d_k}\lambda_{s}^{(k)}\sum_{j\geq\max(m^{n+1},t+s+l)}X_j\beta_{t+s+l,j}^{(\psi(\lfloor\frac{\log (j)}{\log(m)}\rfloor))}\frac{W_{t+s+l}(k)}{W_t(k)} \right\vert^p\\
\leq& \sum_{t\geq 0}\left(\sum_{s=1}^{d_k}\vert\lambda_{s}^{(k)}\vert\sum_{j\geq\max(m^{n+1},t+s+l)}\vert X_j\vert \left\vert\beta_{t+s+l,j}^{(\psi(\lfloor\frac{\log (j)}{\log(m)}\rfloor))}\right\vert\frac{\left\vert W_{t+s+l}(k)\right\vert}{\left\vert W_t(k)\right\vert} \right)^p\\
\leq& \sum_{t\geq 0}\left(\sum_{s=1}^{d_k}\vert\lambda_{s}^{(k)}\vert\sum_{j\geq m^{n+1}}\vert X_j\vert \frac{\rho^j}{\left\vert W_{t+s+l}(\psi(\lfloor\frac{\log (j)}{\log(m)}\rfloor))\right\vert} 
\frac{\left\vert W_{t+s+l}(k)\right\vert}{\left\vert W_t(k)\right\vert} \right)^p:=A^p(\omega).
\end{split}
\end{align}

By classical duality on $\ell_p$ spaces, we have, with $q$ such that $\frac{1}{p}+\frac{1}{q}=1$, 
\[
A(\omega)=\sup_{\Vert u(\omega)\Vert_q\leq 1}A(u)(\omega),
\]
with 
\[
A(u)(\omega):=\sum_{t\geq 0}\vert u_t(\omega)\vert\sum_{s=1}^{d_k}\vert\lambda_{s}^{(k)}\vert\sum_{j\geq m^{n+1}}\vert X_j\vert \frac{\rho^j}{\left\vert W_{t+s+l}(\psi(\lfloor\frac{\log (j)}{\log(m)}\rfloor))\right\vert} 
\frac{\left\vert W_{t+s+l}(k)\right\vert}{\left\vert W_t(k)\right\vert}.
\]
By Fubini's Theorem, we get
\[
A(u)(\omega)\leq \sum_{s=1}^{d_k}\vert\lambda_{s}^{(k)}\vert\sum_{j\geq m^{n+1}}\vert X_j\vert\rho^j\sum_{t\geq 0}\vert u_t(\omega)\vert\frac{\left\vert W_{t+s+l}(k)\right\vert}{\left\vert W_{t+s+l}(\psi(\lfloor\frac{\log (j)}{\log(m)}\rfloor))\right\vert\left\vert W_t(k)\right\vert}.
\]

Applying H\"older's inequality to the innermost sum, we obtain the following for sufficiently large $l$ large, recalling that $\Vert u(\omega)\Vert_q\leq 1$,
\begin{align}
	\begin{split}
		A(u)(\omega)\leq&\sum_{s=1}^{d_k}\vert\lambda_{s}^{(k)}\vert\sum_{j\geq m^{n+1}}\vert X_j\vert\rho^j\left(\sum_{t\geq 0}\vert u_t(\omega)\vert^q\right)^{\frac{1}{q}}\left(\sum_{t\geq 0}\frac{\left\vert W_{t+s+l}(k)\right\vert^p}{\left\vert W_{t+s+l}(\psi(\lfloor\frac{\log (j)}{\log(m)}\rfloor))\right\vert^p\left\vert W_t(k)\right\vert^p}\right)^{\frac{1}{p}}		
	\\
	\leq&  \sum_{s=1}^{d_k}\vert\lambda_{s}^{(k)}\vert\sum_{j\geq m^{n+1}}\vert X_j\vert\rho^j\left(\sum_{t\geq 0}\frac{\left\vert W_{t+s+l}(k)\right\vert^p}{\left\vert W_{t+s+l}(\psi(\lfloor\frac{\log (j)}{\log(m)}\rfloor))\right\vert^p\left\vert W_t(k)\right\vert^p}\right)^{\frac{1}{p}}
	\\
		\leq & \sum_{s=1}^{d_k}\vert\lambda_{s}^{(k)}\vert\sum_{j\geq m^{n+1}}\vert X_j\vert\rho^jR_k(s,l),
		\end{split}
\end{align}
with 
\[
R_k(s,l):=\left(\sum_{t\geq 0}\frac{\left\vert W_{t+s+l}(k)\right\vert^p}{\left\vert W_t(k)\right\vert^p}\frac{1}{\inf_i\left\vert W_{t+s+l}(i)\right\vert^p}\right)^{\frac{1}{p}}.
\]
Let $\eta>0$ be fixed and arbitrarily. Then, for all $k\geq 1$, there exist $m_k$ such that, for all $n\geq m_k+1$, 
\[
\vert w_n(k)\vert\leq (1+\eta)C.
\]
Moreover, we set
\[
U_k:=\max(1,\sup_{1\leq n\leq m_k} \vert w_n(k)\vert).
\]
Therefore, we obtain the following estimate, for all positive integer $n\in R_k$ and all $l\in[m^n,\gamma m^n]\cap\mathbb{N}$ large enough,

\begin{align}
	\begin{split}
		\left(R_k(s,l)\right)^p\leq&\ \sum_{t\leq m_k}\frac{U_k^{p(m_k-t)}((1+\eta)C)^{p(t+s+l-m_k)}}{\inf_i\left\vert W_{t+s+l}(i)\right\vert^p}+ 
		\sum_{t\geq m_k+1}\frac{((1+\eta)C)^{p(s+l)}}{\inf_i\left\vert W_{t+s+l}(i)\right\vert^p}
		\\
		\leq& \ U_k^{pm_k}((1+\eta)C)^{p(s+l)}\sum_{t\geq 0}\frac{1}{\inf_i\left\vert W_{t+s+l}(i)\right\vert^p}
		\\
		\leq&\ U_k^{pm_k}((1+\eta)C)^{p(s+l)}C_{\boldsymbol{\inf}}^p.
	\end{split}
\end{align}

Then we infer, for all positive integer $n\in R_k$ and all $l\in[m^n,\gamma m^n]\cap\mathbb{N}$ large enough,

\begin{align}
	\begin{split}
		A(u)(\omega)\leq&\ U_k^{m_k}((1+\eta)C)^{l}C_{\boldsymbol{\inf}}
		\sum_{s=1}^{d_k}\vert\lambda_{s}^{(k)}\vert((1+\eta)C)^{s}\sum_{j\geq m^{n+1}}\vert X_j\vert\rho^j
		\\
		\lesssim&\ ((1+\eta)C)^{l} \sum_{j\geq m^{n+1}}\vert X_j\vert\rho^j.
	\end{split}
\end{align}

Therefore, we obtain, for all positive integer $n\in R_k$ and all $l\in[m^n,\gamma m^n]\cap\mathbb{N}$ large enough,
\[
A(\omega)\lesssim ((1+\eta)C)^{l} \sum_{j\geq m^{n+1}}\vert X_j\vert\rho^j.
\]

We derive, for all positive integer $n\in R_k$ and all $l\in[m^n,\gamma m^n]\cap\mathbb{N}$ large enough,
\[
\mathbb{E}[\Vert Q_{n,l,k,2}\Vert_p]\lesssim \left(((1+\eta)C)^{\gamma}\rho^{m}\right)^{m^n}
\]
Let us choose $m>\frac{\gamma \log ((1+\eta)C)}{-\log(\rho)}$.Thus Markov's inequality and Borel-Cantelli Lemma ensure that, for $n$ sufficiently large,
\begin{equation} \label{ddabstracteqq2}
	\Vert Q_{n,l,k,2}\Vert_p<\varepsilon \quad \text{almost surely.} 
\end{equation}

Moreover we estimate the term $Q_{n,l,k,1}$. Using properties (\ref{polyhypofonda}), we get, for all $n\in R_k$ and for every $l\in [m^n,\gamma m^n]\cap\mathbb{N}$, 
\begin{equation}
\begin{array}{rcl}\displaystyle\mathbb{E}\left[\Vert Q_{n,l,k,1}\Vert_p\right]&\lesssim& 	\displaystyle\sum_{j=2l+1}^{m^{n+1}-1}\Vert u_{j-l}^{(k)}\Vert_p\\&\lesssim& 
\displaystyle\sum_{j=2l+1}^{m^{n+1}-1}C_{\boldsymbol{w(k)}}\rho_k^{j-l}\\
&\lesssim &\displaystyle\rho_k^{m^n} \frac{\rho_k}{1-\rho_k}\end{array}
\end{equation}
Observe that $\sum_{n\geq 1}\rho_k^{m^n} <\infty$, thus Markov's inequality and Borel-Cantelli Lemma ensure that, for $n$ sufficiently large,
\begin{equation} \label{dabstracteqq2}
	\Vert Q_{n,l,k,1}\Vert_p<\varepsilon \quad \text{almost surely.} 
\end{equation}
Finally, note that, for all $n$ large enough, $l\in [m^n,\gamma m^n]\cap\mathbb{N}$, 
\begin{equation}\label{ddabstracteqq2222}
	\sum_{j=l}^{2l}X_ju_{j-l}^{(k)}-h= T_{l}(\Pi_k^l)({Z}_k)-h.
\end{equation}
Combining (\ref{dpoly1001}) with (\ref{ddabstracteqq2}), (\ref{dabstracteqq2}), and (\ref{ddabstracteqq2222}), we obtain that there exist a positive integer $N_k$ and a subset $A_k$ of $\mathbb{N}$ with $\hbox{dens}(A_k)>0$ such that for all $n\in A_k\cap \bigcup\limits_{n\in R_k;\atop n\geq N_k} [m^n,\ldots,\gamma m^{n}]\cap\mathbb{N}$ 
\[
\Vert T_{l}(\Pi_k^l)({Z})-h\Vert_p<3\varepsilon \quad\hbox{almost surely.}
\]
Since Lemma \ref{lemma_combination_density} guarantees that $\underline{\hbox{dens}}(A_k\cap \bigcup\limits_{n\in R_k;\atop n\geq N_k} [m^n,\ldots,\gamma m^{n}]\cap\mathbb{N}) >0$, Lemma \ref{lemmefondamproba} allows us to conclude that the random vector $Z$ is almost surely a common frequently hypercyclic vector for the family $(P_k(B_{\boldsymbol{w(k)}}))_{k\in \mathbb{N}}$.
\\

\end{proof}

\begin{remark}{\rm 
	\begin{enumerate}
		\item Observe that Theorem \ref{polyweightedshiftscounta22} shows that, due to the nature of polynomials, polynomials of backward shift operators introduce an additional geometric decay. This allows us to relax the assumption given in Theorem \ref{weightedshiftscounta22} that weight products must be, in a sense, comparable, in order to obtain common frequently hypercyclic vectors for countable families of polynomials of weighted backward shifts.
		\item In the case of a finite number of operators $P_k(B_{\boldsymbol{w(k)}})$, $k=1,\dots,N$, the proof of Theorem \ref{polyweightedshiftscounta22} is somewhat simpler. Indeed, for the estimation of the term $Q_{n,l,k,2}$, it suffices to notice that, for all $l\in [m^n,\gamma m^n]\cap\mathbb{N}$, keeping the same notations, 
		\[
		\begin{split}
			\mathbb{E}[\Vert Q_{n,l,k,2}\Vert_p]\leq&\ \sum_{j\geq m^{n+1}}\mathbb{E}[X_j]\Vert\Pi_k^l(u_j^{(\psi(\lfloor\frac{\log (j)}{\log(m)}\rfloor))})\Vert_p\\
		\lesssim&\ M^l \sum_{j\geq m^{n+1}}\Vert u_j^{(\psi(\lfloor\frac{\log (j)}{\log(m)}\rfloor))}\Vert_p\\
		\lesssim&\ M^l \sum_{j\geq m^{n+1}}\rho^{j}\\
		\lesssim&\ (M^{\gamma}\rho^m)^{m^n},
		\end{split}
		\]
where $M:=\max\limits_{k=1,\dots,N}\sum\limits_{i=1}^{d_k}\vert \lambda_i^{(k)} \vert (\sup_{n\in\mathbb{N}}\vert w_n(k)\vert)^i$. The estimation of the term $\Vert Q_{n,l,k,2}\Vert_p$ follows using Borel-Cantelli Lemma with a good choice of $m$. 
	\end{enumerate}	}
\end{remark}

We can also deduce the following corollaries. 

\begin{corollary}\label{dweightedshiftscounta23} Let $1\leq p<\infty$. Let $\beta>1/p$ and $(\beta_k)_{k\in\mathbb{N}}$ a sequence of real numbers so that, for all $k\geq 1$, $\beta_k\geq\beta$ . Let $(\boldsymbol{w(k)})_{k\in\mathbb{N}}$ a family of weights sequence satisfying: there exists $A\geq 1$ , such that for all $k\in\mathbb{N}$, $A^{-1}n^{\beta_k}\leq \vert w_1(k)\ldots w_n(k)\vert\leq A n^{\beta_k}$. Let also $(P_k)_{k\in\mathbb{N}}$, $P_k=\sum\limits_{i=1}^{d_k}\lambda_i^{(k)}z^i$, be a family of polynomials such that there exists $\delta>0$ so that, for all positive integers $k$, $\vert\lambda_1^{(k)}\vert-\sum_{i=2}^{d_k}\vert\lambda_i^{(k)}\vert\geq 1+\delta$. Then there exists a common frequently hypercyclic random vector for the family $(P_k(B_{\boldsymbol{w(k)}}))_{k\in \mathbb{N}}$ on $\ell_p$.
\end{corollary}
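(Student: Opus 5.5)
The plan is to deduce Corollary \ref{dweightedshiftscounta23} directly from Theorem \ref{polyweightedshiftscounta22}. The hypothesis on the polynomials $P_k$ is exactly the one required there, so only assumptions (i) and (ii) on the weights need checking. We take $(X_j)$ i.i.d. in $L^1$; to be safe we take them in $L^p$, for instance standard complex Gaussians. The reason is that the proof of Theorem \ref{polyweightedshiftscounta22} invokes Theorem \ref{gdnbre}, which needs the decay condition (\ref{NikulaDecayCond}) and full support. These hold by Markov's inequality as soon as $X\in L^1$, and full support is automatic for Gaussians.

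\textbf{Condition (ii).} For every $k$ we have $|W_j(k)|\geq A^{-1}j^{\beta_k}\geq A^{-1}j^{\beta}$ for $j\geq 1$, since $\beta_k\geq\beta$. Hence $\inf_k|W_j(k)|\geq A^{-1}j^\beta$, so $\sum_j(\inf_k|W_j(k)|)^{-p}\leq A^p\sum_j j^{-p\beta}<\infty$ because $p\beta>1$. This also gives $\inf_k|W_n(k)|>0$ for every $n$, which the section assumes, and it shows that each $B_{\boldsymbol{w(k)}}$ is frequently hypercyclic, so the associated sequences $(u^{(k)}_j)$ of Lemma \ref{constructionpoly} exist.

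\textbf{Condition (i).} This is the main obstacle: we need a uniform bound on $\limsup_n|w_n(k)|$. Write $w_n(k)=W_n(k)/W_{n-1}(k)$, so that
\[
|w_n(k)|\leq \frac{A n^{\beta_k}}{A^{-1}(n-1)^{\beta_k}}=A^2\Big(\frac{n}{n-1}\Big)^{\beta_k}.
\]
For each fixed $k$, $(n/(n-1))^{\beta_k}\to 1$, so $\limsup_n|w_n(k)|\leq A^2$. Condition (i) therefore holds with $C=A^2\geq 1$, independently of $k$. The point is that the theorem only needs the $\limsup$ bound for each $k$, not a bound uniform in $n$ and $k$. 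This matters because the $\beta_k$ may be unbounded, and then $\sup_{n,k}|w_n(k)|$ could be infinite, since $(n/(n-1))^{\beta_k}$ is large for small $n$. Continuity of each $B_{\boldsymbol{w(k)}}$ holds because, for fixed $k$, the same estimate bounds $|w_n(k)|$ for all $n\geq 2$, and $w_1(k)=W_1(k)$ is finite.

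With (i) and (ii) verified, Theorem \ref{polyweightedshiftscounta22} yields an integer $m\geq 2$ such that $Z=\sum_{j\geq m}X_ju_j^{(\psi(\lfloor \log j/\log m\rfloor))}$ is almost surely a common frequently hypercyclic vector for $(P_k(B_{\boldsymbol{w(k)}}))_{k}$. This is the claim.
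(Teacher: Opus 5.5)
Your proposal is correct and takes the same route as the paper. Like the paper, you apply Theorem \ref{polyweightedshiftscounta22} directly, checking (ii) through $\inf_k|W_j(k)|\geq A^{-1}j^{\beta}$ with $p\beta>1$, and (i) through the ratio $w_n(k)=W_n(k)/W_{n-1}(k)$. Your bound $|w_n(k)|\leq A^2\bigl(n/(n-1)\bigr)^{\beta_k}$ gives the $k$-independent constant $C=A^2$, which is more careful than the paper: it only records $\limsup_n|w_n(k)|<\infty$ for each $k$, whereas the choice of $m$ in the theorem's proof requires that uniformity.
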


\begin{proof} The result follows by a direct application of Theorem \ref{polyweightedshiftscounta22}, because the assumptions ensure that every weighted backward shift $B_{\boldsymbol{w(k)}}$, $k\in \mathbb{N}$, is a continuous frequently hypercyclic operator on $\ell_p$, $\limsup\limits_{n\to\infty}\vert w_n(k)\vert<\infty $ and 
	\[
	\sum_{j\geq 1}\frac{1}{\left(\inf_i\vert W_j(i)\vert \right)^{p}}\leq \sum_{j\geq 1}\frac{1}{j^{p\beta}}<\infty.
	\]
	
\end{proof}

Similarly, we obtain the following statement. 

\begin{corollary}\label{dweightedshiftscounta24} Let $1\leq p<\infty$. Let $(a_k)_{k\in\mathbb{N}}$ be a sequence of complex numbers and $(\boldsymbol{w(k)})_{k\in\mathbb{N}}$ be a family of weights sequence such that, for every $k,n\geq 1$, $A^{-1}\vert a_k\vert^n\leq \vert w_1(k)\ldots w_n(k)\vert\leq A\vert a_k\vert^n$ with $A\geq 1$. Let also $(P_k)_{k\in\mathbb{N}}$, $P_k=\sum\limits_{i=1}^{d_k}\lambda_i^{(k)}z^i$, be a family of polynomials such that there exists $\delta>0$ so that, for all positive integers $k$, $\vert\lambda_1^{(k)}\vert-\sum_{i=2}^{d_k}\vert\lambda_i^{(k)}\vert\geq 1+\delta$. If there exist $1<a<b$ such that for every $k\geq 1$, $a\leq \vert a_k\vert\leq b$, then there exists a common frequently hypercyclic random vector for the family $(P_k(B_{\boldsymbol{w(k)}}))_{k\in \mathbb{N}}$ on $\ell_p$.
\end{corollary}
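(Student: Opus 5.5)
The plan is to obtain Corollary \ref{dweightedshiftscounta24} as a direct application of Theorem \ref{polyweightedshiftscounta22}, as was done for Corollary \ref{dweightedshiftscounta23}. The hypothesis on the polynomials $P_k$ is exactly the one required there, and the $X_j$ are taken i.i.d. in $L^p$, hence in $L^1$. It therefore remains to check the continuity of each $B_{\boldsymbol{w(k)}}$, hypothesis (i), and hypothesis (ii).

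\emph{Hypothesis (ii).} By assumption, $\inf_{k}\vert W_j(k)\vert\geq A^{-1}\inf_k\vert a_k\vert^j\geq A^{-1}a^j$. Hence
\[
\sum_{j\geq 1}\left(\inf_{k}\vert W_j(k)\vert\right)^{-p}\leq A^p\sum_{j\geq1}a^{-pj}<\infty,
\]
since $a>1$. The same bound gives $C_{\boldsymbol{w(k)}}<\infty$ for every $k$, so each $B_{\boldsymbol{w(k)}}$ is frequently hypercyclic. It also gives $\inf_k\vert W_n(k)\vert>0$ for all $n$.

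\emph{Continuity and hypothesis (i).} Write $w_n(k)=W_n(k)/W_{n-1}(k)$. The two-sided bounds then give, for $n\geq 2$,
\[
\vert w_n(k)\vert\leq \frac{A\vert a_k\vert^n}{A^{-1}\vert a_k\vert^{n-1}}=A^2\vert a_k\vert\leq A^2 b.
\]
For $n=1$ we have $\vert w_1(k)\vert=\vert W_1(k)\vert\leq Ab$. So every weight sequence is bounded, each $B_{\boldsymbol{w(k)}}$ is continuous on $\ell_p$, and $\limsup_n\vert w_n(k)\vert\leq A^2b=:C$ uniformly in $k$, which is (i) (we may take $C\geq1$ since $A\geq1$, $b>1$).

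The only point needing care is the uniformity in $k$ of the constant $C$ in (i). It follows from the uniform upper bound $b$ together with the fixed constant $A$. Note that the lower bound $a>1$ is what guarantees (ii), while $b<\infty$ is what guarantees (i); no further obstacle is expected, and Theorem \ref{polyweightedshiftscounta22} yields the common frequently hypercyclic random vector $Z=\sum_{j\geq m}X_ju_j^{(\psi(\lfloor\log j/\log m\rfloor))}$.
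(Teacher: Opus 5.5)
Your proposal is correct and matches the paper's route: the paper says this corollary follows ``similarly'' to Corollary \ref{dweightedshiftscounta23}, i.e.\ by checking the hypotheses of Theorem \ref{polyweightedshiftscounta22}, and your bounds $\inf_k\vert W_j(k)\vert\geq A^{-1}a^j$ and $\vert w_n(k)\vert\leq A^2b$ verify exactly (ii), (i) with a $k$-uniform constant, continuity, and $\inf_k\vert W_n(k)\vert>0$. One small addition: choose the $X_j$ with full support in $\mathbb{C}$ (e.g.\ Gaussian), because Theorem \ref{gdnbre}, which is invoked inside the proof of that theorem, requires this.
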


\vskip3mm

\begin{remark} \label{rem5}\rm{
		\begin{enumerate}
			\item Theorem \ref{polyweightedshiftscounta22} also allows us to recover the fact that if $\Lambda$ is a countable, relatively compact set in $(1,\infty)$, then the family $\{\lambda B;\ \vert \lambda\vert\in\Lambda\}:=\{\lambda_kB;\ k\geq 1,\ \vert\lambda_k\vert\in\Lambda\}$ has a common frequently hypercyclic vector. Indeed, to see this, let $1<a<b$ such that $\Lambda\subset [a,b]$. We then consider the backward weighted shift operator $\sqrt{a}B$, which is clearly frequently hypercyclic. We then set the polynomials $P_k(z)=\frac{\lambda_k}{\sqrt{a}}z$. Note that the aforementioned collection coincides with the family $(P_k(\sqrt{a}B))_{k\geq 1}$ and that these polynomials $P_k$ satisfy the conditions of Theorem \ref{polyweightedshiftscounta22}. Moreover observe that, for all $k\geq 1$, $\frac{\lambda_k}{\sqrt{a}}\geq\sqrt{a}>1$. By applying Corollary \ref{dweightedshiftscounta24}, the result holds: the family $(P_k(\sqrt{a}B))_{k\geq 1}$ shares a common frequently hypercyclic vector.
	\item \label{rem51} Furthermore Theorem \ref{polyweightedshiftscounta22} is also applicable to polynomials of the backward weighted shift operators even if the backward weighted shift operators fail to be frequently hypercyclic. For example, let us consider the operators $5B-6B^2$ and $2B-B^4$. Set $B_1:=\frac{5}{2}B$, $B_2:=\frac{3}{2}B$, $P_1(z):=2z-\frac{24}{25}z^2$ and $P_2(z):=\frac{4}{3}z-\frac{2^4}{3^4}z^4$. Observe that the operators $B_1$ and $B_2$ are frequently hypercyclic weighted backward shift operators and that 
\[
P_1(B_1)=5B-6B^2\hbox{ and }P_2(B_2)=2B-B^4.
\]
Moreover $P_1$ and $P_2$ are admissible polynomials and an easily computation gives
\[
2-\frac{24}{25}>\frac{11}{10}\hbox{ and }\frac{4}{3}-\frac{2^4}{3^4}>\frac{11}{10}.
\] 
Thus let us fix $\rho:=\frac{10}{11}$. Corollary \ref{dweightedshiftscounta24} ensures that the operators $5B-6B^2$ and $2B-B^4$ have a common frequently hypercyclic vector.
	\end{enumerate}
}
\end{remark} 

Remark \ref {rem5} (\ref{rem51}) suggests stating a more general result on the common frequent hypercyclicity of a family of polynomials of a given weighted backward shift, even if this shift is not frequently hypercyclic. Indeed, the dynamics of such operators appear to be driven by the polynomials and specifically by the coefficients of the first-degree term. Actually we can state the following general result, which is an extension of \cite[Corollary 2.24]{CEMM}. First of all, let us introduce, for a bounded weighted backward shift $B_{\boldsymbol{w}}$ on $\ell_p$, the quantity 
\[ 
r_{p,\boldsymbol{w}}:=\sup\{\vert\lambda\vert:\ \lambda\in\sigma_p(B_{\boldsymbol{w}})\},
\]
where $\sigma_p(B_{\boldsymbol{w}})$ denotes the point spectrum of $B_{\boldsymbol{w}}$. The following equality is well-known (see \cite{Shield})
\[
r_{p,\boldsymbol{w}}^{-1}=\limsup\limits_{n\to\infty}\vert w_1\ldots w_n\vert^{-1/n}.
\] 

\begin{corollary}\label{gen224CEMM} Let $1\leq p<\infty$, $\boldsymbol{w}=(w_n)_{n\in\mathbb{N}}$ be a bounded weight sequence and $B_{\boldsymbol{w}}$ the associated bounded weighted backward shift on $\ell_p$. Let also $(P_k)_{k\in\mathbb{N}}$, $P_k:=\sum\limits_{i=1}^{d_k}\lambda_i^{(k)}z^i$, be a family of polynomials and $a,b>0$ such that, for all $k\geq 1$, $r_{p,\boldsymbol{w}}^{-1}<a\leq \vert\lambda_1^{(k)}\vert\leq b$. If there exist a sequence $(r_k)_{k\geq 1}$ of real numbers and $\delta>0$ such that, for all $k\geq 1$, 
	\[r_k\in [a,\vert\lambda_1^{(k)}\vert)\quad\hbox{ and }\quad\frac{\vert\lambda_1^{(k)}\vert r_k^{d_k-1}-\sum_{i=2}^{d_k}\vert\lambda_i^{(k)}\vert r_k^{d_k-i}}{r_k^{d_k}}\geq 1+\delta,
	\]
	then $\bigcap_{k\geq 1}FHC(P_k(B_{\boldsymbol{w}}))\ne\emptyset.$
\end{corollary}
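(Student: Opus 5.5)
The plan is to reduce Corollary~\ref{gen224CEMM} to Theorem~\ref{polyweightedshiftscounta22}, more precisely to Corollary~\ref{dweightedshiftscounta24}, by rescaling. For each $k$ we factor $B_{\boldsymbol{w}}=r_k^{-1}(r_kB_{\boldsymbol{w}})$ and set $B_{\boldsymbol{w(k)}}:=r_kB_{\boldsymbol{w}}$, i.e.\ $w_n(k)=r_kw_n$. We then define $Q_k(z):=P_k(z/r_k)=\sum_{i=1}^{d_k}\lambda_i^{(k)}r_k^{-i}z^i$, so that $Q_k(B_{\boldsymbol{w(k)}})=P_k(B_{\boldsymbol{w}})$. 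Admissibility of $Q_k$ with uniform margin means
\[
\frac{|\lambda_1^{(k)}|}{r_k}-\sum_{i=2}^{d_k}\frac{|\lambda_i^{(k)}|}{r_k^{i}}\geq 1+\delta .
\]
Multiplying numerator and denominator by $r_k^{d_k}$ shows this is exactly the hypothesis of the corollary. So the polynomial side of Theorem~\ref{polyweightedshiftscounta22} holds with $\rho=1/(1+\delta)$.

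Next we check the weight conditions of Theorem~\ref{polyweightedshiftscounta22}. Condition (i) holds since $\limsup_n|w_n(k)|\leq r_k\sup_n|w_n|\leq b\,\|\boldsymbol{w}\|_\infty$, using $r_k<|\lambda_1^{(k)}|\leq b$; this bound is uniform in $k$. For condition (ii) we need
\[
\sum_j\Bigl(\inf_k|W_j(k)|\Bigr)^{-p}<\infty .
\]
Since $|W_j(k)|=r_k^j|W_j|\geq a^j|W_j|$, it suffices to show $\sum_j a^{-jp}|W_j|^{-p}<\infty$. Here we use the formula $r_{p,\boldsymbol{w}}^{-1}=\limsup_n|W_n|^{-1/n}$ together with $a>r_{p,\boldsymbol{w}}^{-1}$. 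Picking $a'$ with $r_{p,\boldsymbol{w}}^{-1}<a'<a$, we get $|W_j|^{-1}\leq a'^{\,j}$ for $j$ large, so the terms are dominated by $(a'/a)^{jp}$, which is geometric and summable. Thus $C_{\boldsymbol{\inf}}<\infty$, and in particular each $B_{\boldsymbol{w(k)}}$ is frequently hypercyclic, so the associated sequences $\boldsymbol{u^{(k)}}$ from Lemma~\ref{constructionpoly} exist. We also need $\inf_k|W_n(k)|>0$, which follows from the same bound.

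Finally we apply Theorem~\ref{polyweightedshiftscounta22}, taking for instance $X$ standard complex Gaussian, which lies in $L^1$ and has full support. This yields a random vector $Z$ that is almost surely common frequently hypercyclic for $(Q_k(B_{\boldsymbol{w(k)}}))_k=(P_k(B_{\boldsymbol{w}}))_k$, so the intersection $\bigcap_k FHC(P_k(B_{\boldsymbol{w}}))$ is nonempty.

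The only delicate point I expect is the uniformity in $k$ in the weight conditions: the lower bound $r_k\ge a$ and the upper bound $r_k<b$ are what make both conditions independent of $k$. I would double-check that the factorisation identity holds coefficientwise, namely $\lambda_i^{(k)}B_{\boldsymbol{w}}^i=\lambda_i^{(k)}r_k^{-i}(r_kB_{\boldsymbol{w}})^i$.
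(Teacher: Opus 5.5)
Your proposal is correct and is essentially the paper's proof: you rescale to $r_kB_{\boldsymbol{w}}$, write $P_k(B_{\boldsymbol{w}})=\widetilde P_k(r_kB_{\boldsymbol{w}})$ with $\widetilde P_k(z)=\sum_i\lambda_i^{(k)}r_k^{-i}z^i$, and check the hypotheses of Theorem~\ref{polyweightedshiftscounta22} directly. (Your passing mention of Corollary~\ref{dweightedshiftscounta24} is unnecessary, since its two-sided geometric bound on $W_n(k)$ need not hold for a general $\boldsymbol{w}$, but you never actually use it.) Your intermediate $a'\in(r_{p,\boldsymbol{w}}^{-1},a)$ makes the summability step more rigorous than the paper's stated bound $\inf_k|r_k^nW_n|\geq a^nr_{p,\boldsymbol{w}}^n$, which the $\limsup$ formula only yields up to an $\varepsilon$-loss for large $n$.
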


\begin{proof} First observe that, for all $k\geq 1$, the operator $r_kB_{\boldsymbol{w}}$ is frequently hypercyclic and $P_k(B_{\boldsymbol{w}})=\widetilde{P}_k(r_kB_{\boldsymbol{w}})$ where 
	\[
	\widetilde{P}_k(z):=\sum\limits_{i=1}^{d_k}\frac{\lambda_i^{(k)}}{r_k^i}z^i.
	\]
	The assumptions of the theorem entail that
	\[
	\frac{\vert\lambda_1^{(k)}\vert}{r_k}-\sum_{i=2}^{d_k}\frac{\vert\lambda_i^{(k)}\vert}{r_k^i}=\frac{\vert\lambda_1^{(k)}\vert r_k^{d_k-1}-\sum_{i=2}^{d_k}\vert\lambda_i^{(k)}\vert r_k^{d_k-i}}{r_k^{d_k}}\geq 1+\delta.
	\]	 
Then it is easy to check that 
\[
\limsup\limits_{n\to\infty}(\vert r_k w_n\vert)\leq b\sup_{n\in\mathbb{N}}\vert w_n\vert <\infty \quad\hbox{ and }\quad\inf_{k\in\mathbb{N}}\vert r_k^n w_1\dots w_n\vert\geq a^n r_{p,w}^n>0
\]	
which implies, since $a r_{p,w}>1$, $\sum_{n\geq 1}	\inf_{k\in\mathbb{N}}\vert r_k^n w_1\dots w_n\vert^{-p}<\infty$. Thus Theorem \ref{polyweightedshiftscounta22} gives the result.
\end{proof}

\begin{remark}
	\rm{\begin{enumerate}
			\item Let $B_{\boldsymbol{w}}$ be a bounded weighted backward shift on $\ell_p$. Observe that with the choice, for all $k\geq 1$, $P_k(z)= \lambda_kz$, Corollary \ref{gen224CEMM} recovers Corollary 2.24 (1) of \cite{CEMM}, i.e. $\bigcap_{\lambda\in\Lambda}FHC(\lambda B_{\boldsymbol{w}})$ is non-empty whenever $\Lambda$ is a countable relatively compact subset of $(r_{p,\boldsymbol{w}}^{-1},\infty)$.
		\item Similarly to Corollary \ref{gen224CEMM}, we could establish conditions for a family $P_k(B_{\boldsymbol{w(k)}})_{k\geq 1}$ to possess a common frequently hypercyclic vector, even in cases where the shifts $B_{\boldsymbol{w(k)}}$ are not frequently hypercyclic.
		\item The proof method of Theorem 1 allows us to extend the result to entire functions, provided that suitable conditions on the Taylor coefficients of these functions are satisfied.
	\end{enumerate}}
\end{remark}

\end{document}